\newcommand{\itai}{\ \raisebox{0.6ex}{\(\cdot\)}\hspace{-0.5ex}\kern-0.3em{-}\ }
\newcommand{\half}{\dfrac{1}{2}}
\newcommand{\A}{\mathscr{A}}
\renewcommand{\l}{\ell}
\newcommand{\id}{\bar{0}}
\newcommand{\nid}{\bar{1}}
\newcommand{\fall}{\text{for all }}
\newcommand{\ex}{\text{there exists }}
\newcommand{\set}[1]{\{#1\}}
\newtheorem{definition}{Definition}[section]
\newtheorem{example}{Example}[section]
\newtheorem{property}{Property}
\newtheorem{theorem}{Theorem}[section]
\newtheorem{claim}{Claim}[section]
\newtheorem{proposition}{Proposition}[section]
\title{Continuous Algebra: Algebraic Semantics for Continuous Propositional Logic}
\author{Purbita Jana\\ Prateek Kwatra}
\date{ }
\begin{document}

\maketitle
\begin{abstract}%an?
We present algebraic semantics for Continuous Propositional Logic (CPL) introduced by Itai Ben Yaacov, viewed as \L ukasiewicz propositional logic with a reversed truth–falsity orientation and enriched by a unary halving connective. We introduce \emph{continuous algebras} as MV-algebras together with an unary operator $\kappa$ analogous to the halving operator introduced in CPL and analyze their core structural properties, including ideals, quotient constructions, and subdirect representations. We further establish a correspondence between continuous algebras and the class of 2-divisible $\ell u$-groups, extending Mundici’s representation theory to the continuous setting. This correspondence leads to a purely algebraic proof of the weak completeness theorem for CPL.
\end{abstract}

\bigskip

\hspace{0.4cm}\textit{Keywords}: Continuous logic; Continuous algebra; \L ukasiewicz logic; MV-algebra; $\l u$-group; p-divisible group.

\bigskip

\hspace{0.4cm}Mathematics Subject Classification: 03C90, 03C66,  06D35, 03B60.

\section{Introduction}

Continuous logic offers a natural framework for reasoning about systems in which truth values vary gradually rather than discretely. Unlike classical propositional logic, where truth is binary, continuous logic allows truth values to range over a continuum, typically the unit interval $[0,1]$. A distinctive feature of this setting is the reversal of the classical truth–falsity perspective: the value $0$ represents absolute truth, while $1$ represents absolute falsity, and logical implication is oriented accordingly. This semantic reorientation is fundamental to continuous logic and underlies its treatment of graded truth.

Continuous Propositional Logic (CPL), introduced by Itai Ben Yaacov and collaborators \cite{IB, BU}, can be viewed as \L ukasiewicz propositional logic equipped with this reversed semantic orientation and enriched by an additional unary connective $\frac{1}{2}$. Semantically, the connective $\frac{1}{2}$, halves the degree of falsity of a formula. The inclusion of the halving operator into the \L ukasiewicz propositional logic ensures fullness for CPL—an analogue of functional completeness adapted to the continuous setting—of the connectives $\left\{\itai,\neg,\half\right\}$ \cite{IA}. Continuous first-order logic was subsequently developed by Ben Yaacov and Usvyatsov as a specialization of the continuous logic of Chang and Keisler \cite{CK}, further highlighting the close relationship between many-valued logics and continuous semantics.

The principal aim of this paper is to provide an algebraic semantics for propositional continuous logic and to establish an algebraic weak completeness theorem for CPL, in close analogy with Mundici’s algebraic treatment of \L ukasiewicz logic via MV-algebras. MV-algebras and their representation by lattice-ordered groups with strong unit form the backbone of completeness results. Our work extends this paradigm to CPL, taking into account both the presence of the halving connective and the reversed truth–falsity orientation inherent in continuous logic.

To this end, we introduce a new algebraic structure, called a \emph{continuous algebra}, obtained by endowing an MV-algebra with a unary operator $\kappa$ corresponding to the connective $\frac{1}{2}$. Two additional axioms regulate the interaction between $\kappa$ and the MV-operations. Although continuous algebras form a variety and many of their general properties could, in principle, be derived directly from universal algebra, we deliberately present detailed and direct proofs of their fundamental structural features in order to clarify the behavior of this newly proposed semantics. In particular, we study continuous ideals, prime ideals, homomorphisms, and quotient constructions, and we establish a subdirect representation theorem showing that every continuous algebra embeds into a subdirect product of totally ordered continuous algebras, called \emph{continuous chains}. This explicit development makes transparent how classical MV-algebraic properties extend to the continuous setting.

A central technical contribution of the paper is the identification of a close connection between continuous algebras and an appropriate class of lattice-ordered groups. Specifically, we show that continuous algebras correspond to \emph{2-divisible} $\ell u$-groups equipped with additional structure reflecting the halving operation. This correspondence plays a vital role in the main completeness argument and serves as the correct analogue of Mundici’s equivalence between MV-algebras and $\ell u$-groups. By establishing this connection, we obtain Chang completeness theorem in the continuous setting.
Building on the representation theory of continuous algebras and their correspondence with 2-divisible $\ell u$-groups, we prove an analogue of Chang’s completeness theorem for continuous algebras and use it to derive the weak completeness of Continuous Propositional Logic. In contrast to earlier approaches to continuous logic that rely on approximated strong completeness \cite{IA}, our results provide a purely algebraic route to weak completeness for CPL.

The paper is organized as follows. Section 2 recalls the necessary background on MV-algebras and Continuous Propositional Logic and introduces continuous algebras. Section 3 develops the theory of continuous ideals, homomorphisms, and quotient algebras, culminating in the subdirect representation theorem. Sections 4 and 5 investigate the correspondence between continuous algebras and 2-divisible $\ell u$-groups. Finally, Section 6 establishes the weak completeness theorem for CPL using the algebraic framework developed throughout the paper.

\section{Continuous algebra}
This section introduces continuous algebras, defined as MV-algebras equipped with a unary operation 
$\kappa$ satisfying specific axioms and inducing a lattice structure with an associated partial order. To support the development of new results, we revisit essential properties of MV-algebras and incorporate them so as to make the article self-contained. Furthermore, we study continuous propositional logic (CPL) and construct a Lindenbaum-type algebra associated with it, which naturally carries the structure of a continuous algebra.
\begin{definition}[Continuous Algebra]
A continuous algebra is a tuple $(A,\oplus,\neg,\id,\kappa)$, where $\oplus$ is a binary operation on $A$, $\id\in A$ and $\kappa,\neg$ are unary operations on $A$ such that for all $ x,y,z\in A$, 
\begin{enumerate}[label=C\arabic*.]
    \item $x\oplus(y\oplus z)=(x\oplus y)\oplus z.$\label{C1}
    \item $x\oplus y= y\oplus x$.
    \item $x\oplus \id= x$.
    \item $\neg\neg x=x$.
    \item $x\oplus \neg\id=\neg\id$.
    \item $\neg(\neg x\oplus y)\oplus y=\neg(\neg y\oplus x)\oplus x$.\label{C6}
    \item $\kappa x=x\oplus \neg\kappa x$.\label{C7}
    \item $(\neg\kappa x\oplus \kappa y)\oplus \neg(\neg x\oplus y)=\neg \id.$\label{C8}
\end{enumerate}
\end{definition}
We denote $\neg\id$ by $\nid$ throughout this paper.
Note that if $\mathscr{A}=(A,\oplus,\neg,\id,\kappa)$ is a continuous algebra, then $\mathscr{A'}=(A,\oplus,\neg,\id)$ is an MV algebra.

So, a few results about $\mathscr{A}$ are due to its MV-algebra structure, such as:
\begin{property}\label{p1}
     $x\oplus \neg x=\nid$, for all $ x\in A$.
\end{property}

    \begin{property}\label{p2} 
    $x\oplus y=\id$ implies $x, y=\id$, for all $x,y\in A$.
    \end{property}
\begin{property}\label{propdistance0iffsamepoints}
        $x=y$ if and only if $\neg(\neg x\oplus y)\oplus \neg(\neg
         y\oplus x)=\id$, for all $x,y\in A$.
    \end{property}
    \begin{property}\label{p3}
    $\neg x\oplus y=\nid$ if and only if there exists $ z\in A$ such that $x\oplus z=y$, for all $x,y\in A$.
    \end{property}
Property \ref{p3} induces a partial order on 
$\A$, analogous to the order on an MV-algebra.
\begin{definition}
 We define a relation, $\le$, on $A$ by 
\begin{equation}\label{<}
x\le y\iff \neg x\oplus y=\nid. 
\end{equation}
\end{definition}
Since this order on a continuous algebra coincides with the order on its underlying MV-algebra, the relation $\leq$ turns $A$ into a lattice, as in MV-algebras.%What?
\begin{definition}[Continuous Chain]
    A continuous algebra is said to be a continuous chain if and only if it is totally ordered with respect to the ordering in \ref{<}.
\end{definition}
\begin{proposition}\label{pdual}
    Let $\mathscr{A}=(A,\oplus,\neg,\id,\kappa)$ be a continuous algebra. Then $\bar{\mathscr{A}}:=(A,\odot,\neg,\nid,{\kappa'})$ is also a continuous algebra where $x\odot y:=\neg(\neg x\oplus \neg y)$ and ${\kappa'}x=\neg\kappa\neg x$. 
\end{proposition}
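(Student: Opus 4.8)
The plan is to exploit the self-duality of the MV-algebra axioms and then reduce the two halving axioms for $\bar{\A}$ to the corresponding axioms \ref{C7} and \ref{C8} of $\A$ by means of the De Morgan law $\neg(x\odot y)=\neg x\oplus\neg y$ together with the involution $\neg\neg x=x$. Throughout, the distinguished constants swap roles: $\nid$ plays the part of the zero element of $\bar{\A}$, while $\neg\nid=\id$ plays the part of its top element. Two auxiliary identities carry the whole argument, namely $\neg(x\odot y)=\neg x\oplus\neg y$ (immediate from the definition of $\odot$ and $\neg\neg x=x$) and $\neg\kappa' z=\neg\neg\kappa\neg z=\kappa\neg z$.

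First I would verify that $(A,\odot,\neg,\nid)$ satisfies the six MV-axioms \ref{C1}--\ref{C6} with zero $\nid$. This is the classical fact that the order-dual of an MV-algebra is again an MV-algebra: associativity and commutativity of $\odot$ transfer from those of $\oplus$ under $\neg$; the identity law reads $x\odot\nid=\neg(\neg x\oplus\id)=x$ using $x\oplus\id=x$; the absorption law reads $x\odot\id=\neg(\neg x\oplus\nid)=\nid$ using $x\oplus\nid=\nid$; and the sixth axiom dualizes using $\neg\neg x=x$. Rather than grind through each line, I would record that the MV-reduct of $\bar{\A}$ is exactly the dual of the MV-reduct of $\A$ and invoke standard MV-duality.

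The substantive content lies in the two halving axioms for $\kappa' x=\neg\kappa\neg x$. For \ref{C7} of $\bar{\A}$, namely $\kappa' x=x\odot\neg\kappa' x$, I would compute $x\odot\neg\kappa' x=x\odot\kappa\neg x=\neg(\neg x\oplus\neg\kappa\neg x)$, and then apply \ref{C7} of $\A$ to the element $\neg x$, which gives $\kappa\neg x=\neg x\oplus\neg\kappa\neg x$; this collapses the parenthesis to $\kappa\neg x$, leaving $\neg\kappa\neg x=\kappa' x$, as required. For \ref{C8} of $\bar{\A}$, whose right-hand side is now $\neg\nid$, the cleanest route is to negate the claimed identity $(\neg\kappa' x\odot\kappa' y)\odot\neg(\neg x\odot y)=\neg\nid$; pushing $\neg$ inward by repeated De Morgan together with $\neg\kappa' z=\kappa\neg z$ transforms the negated left-hand side into $(\neg\kappa\neg x\oplus\kappa\neg y)\oplus\neg(\neg\neg x\oplus\neg y)$, which is precisely the left-hand side of \ref{C8} of $\A$ instantiated at $x:=\neg x$ and $y:=\neg y$, hence equal to $\nid=\neg\neg\nid$. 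Undoing the negation yields the desired identity.

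I do not expect a genuine obstacle; the one point demanding care is the bookkeeping forced by the exchange of the distinguished constants, so that the top element on the right-hand side of the dual \ref{C8} is $\neg\nid=\id$ and each De Morgan step tracks the involution $\neg$ faithfully. Once the auxiliary identities $\neg(x\odot y)=\neg x\oplus\neg y$ and $\neg\kappa' z=\kappa\neg z$ are in place, both halving axioms become one-line instantiations of \ref{C7} and \ref{C8}, and the remainder of the proof reduces to the routine MV-duality verification of \ref{C1}--\ref{C6}.
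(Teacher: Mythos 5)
Your proposal is correct and follows essentially the same route as the paper: the MV-reduct axioms \ref{C1}--\ref{C6} transfer by duality, and the dual versions of \ref{C7} and \ref{C8} are obtained by De Morgan together with instantiating \ref{C7} and \ref{C8} of $\A$ at $\neg x$ (and $\neg y$), exactly as in the paper's displayed computations. One cosmetic slip: in your check of the dual of \ref{C5}, the value of $x\odot\id=\neg(\neg x\oplus\nid)$ is $\neg\nid=\id$ (the top element of $\bar{\A}$), not $\nid$.
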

\begin{proof}
    Let $x,y,z\in A$. Then,
    \begin{enumerate}
 \item   $x\odot (y\odot z)=\neg(\neg x\oplus (\neg y\oplus\neg z ))=\neg((\neg x\oplus \neg y)\oplus \neg z)=\neg(\neg(x\odot y)\oplus\neg z)=(x\odot y)\odot z.$
\item $x\odot y=\neg(\neg x\oplus \neg y)=\neg(\neg y\oplus \neg x)=y\odot x$. 
\item $x\odot \nid=\neg(\neg x\oplus \neg\nid)=\neg(\neg x)=x.$ 
\item $\neg\neg x=x$. 
\item $x\odot\neg\nid=\neg(\neg x\oplus \nid)=\neg\nid$. 
\item $\neg(\neg x\odot y)\odot y=\neg(\neg(x\oplus \neg y)\oplus \neg y)=\neg(\neg(y\oplus\neg x)\oplus \neg x)=\neg((\neg y\odot x)\oplus\neg x)=\neg(\neg y\odot x)\odot x.$
\item
       $ \kappa'x=\neg(\kappa\neg x)=\neg(\neg x\oplus \neg\kappa\neg x)=\neg(\neg x\oplus \kappa'x)=x\odot \neg\kappa' x.$
\item 
  $(\neg\kappa' x\odot\kappa'y)\odot \neg(\neg x\odot y)=(\kappa \neg x\odot\neg\kappa\neg y)\odot(x \oplus\neg y))=\neg((\neg \kappa \neg x\oplus \kappa\neg y)\oplus \neg(x\oplus \neg y))=\neg\nid.$
    \end{enumerate}
\end{proof}
\textbf{\underline{Remark}}:- If $(A,\oplus,\neg,\id,\kappa)$ is a continuous algebra then $\kappa'$ of $(A,\odot,\neg,\nid,\kappa')$ acts like $\delta_2$ for 2-divisible MV-algebra \cite{BG} on the underlying MV algebra $(A,\oplus,\neg,\id)$.
\begin{example}
     Consider $\A=([0,1],\oplus,\neg,0,\kappa)$ such that for all $ x,y\in [0,1]$, 
    \begin{align*}
        x\oplus y&=\min\set{1, x+y};\\
        \neg x&=1-x;\\
        \kappa x&=\frac{x+1}{2}.
    \end{align*}
    Then, $([0,1],\oplus,\neg,0)$ is an MV-algebra.
    Therefore, it remains to verify axioms \ref{C7} and \ref{C8}.
    Now for any $x,y\in [0,1]$, 
    \begin{align*}
        x\oplus\neg \kappa x&=\min\set{1,x+1-\kappa x}\\
        &=\min\left\{1,x+1-\frac{x+1}{2}\right\}\\
        &=\min\left\{1,\frac{x+1}{2}\right\}\\
        &=\frac{x+1}{2}\\
        &=\kappa x.\\
        \text{and} \quad (\neg\kappa x\oplus \kappa y)\oplus \neg(\neg x\oplus y)&=\min\set{1,1-\kappa x+\kappa y}\oplus (1-(\neg x\oplus y))\\
        &=\min\set{1,1-\kappa x+\kappa y}\oplus (1-\min\set{1,1-x+y})\\
        &=\min\set{1,\min\set{1,1-\kappa x+\kappa y}+1-\min\set{1,1-x+y} }\\
        &=\min\left\{1,\min\left\{ 1,\frac{2-x+y}{2}\right\}+1-\min\set{1,1-x+y}\right\}\\
        &=\begin{cases}
            1, &\text{if }y\ge x;\\
            \min\set{1,\frac{2-x+y}{2}+1-1+x-y}, &\text{if }y<x.
        \end{cases}\\
        &=\begin{cases}
            1, &\text{if }y\ge x;\\
            \min\set{1,1+\frac{x-y}{2}}, &\text{if }y<x.
        \end{cases}\\
        &=1.
    \end{align*}
    So, $\A$ is a continuous algebra.
    
    In the context of $[0,1]$, unless specified otherwise, the operations $\oplus$, $\neg$ and $\kappa$ are those defined in this example.
    \end{example}
\textbf{\underline{Remark}}:-For the unit interval $[0,1]$, we can define another continuous algebra using Proposition \ref{pdual}. We denote the resulting algebra, $\overline{[0,1]}$, by $[1,0]$. The operations are given by $x\odot y=\max\set{0, x+y-1}$ and $\kappa' x=\dfrac{x}{2}$.

    \begin{example}
      Consider $\A'=([0,1]\times[0,1],\oplus',\neg',(0,0),\kappa')$ such that,
    \begin{align*}
        (x,y)\oplus'(x',y')&=(x\oplus x',y\oplus y');\\
        \neg'(x,y)&=(\neg x,\neg y);\\
        \kappa'(x,y)&=(\kappa x,\kappa y).
    \end{align*}
    for all $ (x,y),(x',y')\in [0,1]\times[0,1]$.
    Then, $\A'$ is a nonlinear continuous algebra. Indeed, $$\neg'(0.4,0.6)\oplus'(0.5,0.5)=(1,0.9)\ \text{and}\ \neg'(0.5,0.5)\oplus'(0.4,0.6)=(0.9,1),\ \text{and hence}$$ neither $(0.5,0.5)\le(0.4,,0.6)$ nor $(0.4,0.6)\le(0.5,0.5)$. Therefore, the induced order is not linear.
\end{example}
Now, we present some examples of MV-algebras that are not continuous algebras.
\begin{example} Consider $\A=(\set{0,1},\vee,\neg,0)$. Then, $\A$ is a Boolean algebra and hence, an MV algebra.\\
    However, if there is some $\kappa$ on $\set{0,1}$ satisfying  axioms \ref{C7} and \ref{C8}, then 
$$\kappa0=0\vee\neg\kappa0\implies \kappa0=1-\kappa0\implies 2\kappa0=1.$$
But there is no element $x\in\set{0,1}$ such that $2x=1$.
    \end{example}
    \begin{example}
    Consider $\A=([1,2]\cap\mathds{Q},\oplus,\neg,1)$ such that for all $ x,y\in [1,2]\cap\mathds{Q}$
$$x\oplus y=\min\set{2, xy};\ \text{and}\ \neg x=\frac{2}{x}. $$
Then, $\A$ is an MV algebra. If there is some $\kappa$ on $[1,2]\cap\mathds{Q}$ satisfying axioms \ref{C7} and \ref{C8}, then 
$$\kappa1=1\oplus\neg\kappa1\implies \kappa1=\neg\kappa1\implies \kappa1=\frac{2}{\kappa1}.$$ But there is no element $x\in[1,2]\cap\mathds{Q}$ such that $x^2=2$.
\end{example}
\begin{theorem}
    The finite \L ukasiewicz chain of length $n+1$, 
    $$ \text{\L}_n=\left\{0,\frac{1}{n},\frac{2}{n},\dots,\frac{n-1}{n},1\right\},$$ is not a continuous chain for any $n\geq1$. 
\end{theorem}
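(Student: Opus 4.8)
The plan is to observe that the set $\text{\L}_n$ carries its standard operations $x\oplus y=\min\set{1,x+y}$ and $\neg x=1-x$, making $(\text{\L}_n,\oplus,\neg,\id)$ the usual finite MV-chain with $\id=0$; it is linearly ordered, so it can be a continuous chain only if this MV-chain can be enriched by a unary map $\kappa:\text{\L}_n\to\text{\L}_n$ satisfying \ref{C7} and \ref{C8}. The heart of the argument is that \ref{C7} alone already determines $\kappa$ completely, and that the values it forces fall outside $\text{\L}_n$. Accordingly I would first prove that \ref{C7} forces $\kappa x=\frac{x+1}{2}$ for every $x$, and then extract a parity contradiction.

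For the first step I would fix $x$ and set $k:=\kappa x$, so that \ref{C7} reads $k=x\oplus\neg k=\min\set{1,\,x+1-k}$. If the minimum is attained by $1$, then $k=1$; substituting back yields $\min\set{1,x}=x=1$, so $x=1$, which is consistent with $\frac{x+1}{2}=1$. Otherwise $k=x+1-k$, i.e.\ $2k=x+1$, so $k=\frac{x+1}{2}$, and one checks this value satisfies $x+1-k\le 1$, making the case split consistent. Hence $\kappa x=\frac{x+1}{2}$ is the unique solution of \ref{C7} in $[0,1]$, so any $\kappa$ witnessing that $\text{\L}_n$ is a continuous chain must be this map restricted to $\text{\L}_n$.

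It then remains to see that $x\mapsto\frac{x+1}{2}$ does not map $\text{\L}_n$ into itself. Taking $x=\id=0$ gives $\kappa\id=\frac{1}{2}$, which lies in $\text{\L}_n$ only when $n$ is even (for $n=1$ this is exactly the two-element non-example already given). When $n$ is even I would instead take $x=\frac{1}{n}$: then $\kappa\!\left(\frac{1}{n}\right)=\frac{n+1}{2n}$, which equals some $\frac{j}{n}\in\text{\L}_n$ only if $n+1$ is even, i.e.\ only if $n$ is odd --- a contradiction. Either way some element of $\text{\L}_n$ has its forced $\kappa$-image outside $\text{\L}_n$, so no admissible $\kappa$ exists and $\text{\L}_n$ is not a continuous chain. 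Alternatively, and more uniformly, the forced map sends $0\mapsto\frac{1}{2}\mapsto\frac{3}{4}\mapsto\frac{7}{8}\mapsto\cdots$, producing the infinitely many distinct values $1-2^{-k}$, which cannot all lie in the finite set $\text{\L}_n$.

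I expect the only genuinely delicate point to be the uniqueness argument in the second paragraph: one must treat the $\min$ in $\oplus$ with care and confirm that the branch where the minimum equals $1$ introduces no spurious extra solutions. It is worth stressing that axiom \ref{C8} is never invoked --- \ref{C7} by itself obstructs the existence of $\kappa$ --- which parallels the way the preceding non-examples all derived their contradictions from \ref{C7} applied at $\id$.
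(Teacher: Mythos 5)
Your proposal is correct and its main line is essentially the paper's argument: both derive $\kappa 0=\tfrac12$ (forcing $n$ even) and $\kappa\tfrac1n=\tfrac{n+1}{2n}$ (forcing $n$ odd) from \ref{C7} alone, yielding the same parity contradiction; you merely make explicit the uniqueness step (that \ref{C7} forces $\kappa x=\tfrac{x+1}{2}$ for every $x$, after disposing of the $\min=1$ branch) which the paper handles pointwise at $0$ and $\tfrac1n$. Your alternative finish --- that the forced map would generate the infinite orbit $1-2^{-k}$ inside the finite set $\text{\L}_n$ --- is a genuinely more uniform way to conclude and avoids the case split on the parity of $n$, though it is not the route the paper takes.
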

\begin{proof}
    Let $n\ge 1$. If possible, let $\ex \kappa$ for \L$_n$ satisfying axioms \ref{C7} and \ref{C8} (absurdum hypothesis).\\
    Then, 
    \begin{align*}
        \kappa0=0\oplus\neg\kappa0
        &\implies \kappa0=\neg\kappa0\\
        &\implies \kappa0=1-\kappa0\\
        &\implies 2\kappa0=1\\
        &\implies \kappa0=\frac{1}{2}.
    \end{align*}
    If $\dfrac{1}{2}\in $\L$_n$, then $n$ is even.  Now, 
    \begin{align*}
        \kappa\frac{1}{n}&=\frac{1}{n}\oplus\neg\kappa\frac{1}{n}\\
        &=\min\left\{1,\frac{1}{n}+1-\kappa\frac{1}{n}\right\}.
    \end{align*}
    If $\kappa\dfrac{1}{n}=1$, then by C7, we get that $\dfrac{1}{n}=1$, but this is not true for any even $n$. So, we have
    \begin{align*}
        \kappa\frac{1}{n}=\frac{1}{n}+1-\kappa\frac{1}{n}&\implies
        2\kappa\frac{1}{n}=\frac{n+1}{n}\\
        &\implies \kappa\frac{1}{n}=\frac{n+1}{2n}\\
        &\implies 2|n+1
    \end{align*}
    This contradicts the fact that $n$ is even. So, $\kappa$ does not exist.
\end{proof}
\begin{example}
    The MV-algebra of infinitesimals \cite{CC} is not a continuous algebra. Indeed, if it were continuous, then there would exist an operation  $\kappa$ such that, $\kappa c=c\oplus \neg\kappa c$. It is clear that $\kappa c\ne0$ and $\kappa c\ne 1$. If possible, let $\kappa c=mc$ for some $m>0$. Then $\neg\kappa c=1-mc$. So, $c\oplus (1-mc)=1-(m-1)c\ne mc$. Similarly, if $\kappa c=1-mc$ for some $m>0$ then $\neg\kappa c=mc$. Then, $c\oplus mc=(m+1)c\ne 1-mc$. Therefore, there does not exist any operation $\kappa$ on the MV-algebra of infinitesimals for which it becomes a continuous algebra.
\end{example}
\begin{proposition}
    The smallest continuous subalgebra of $([0,1],\oplus,\neg,0,\kappa)$ containing $\set{0,1}$ is $$M=\left\{\dfrac{a}{2^b}\middle |  0\le a\le 2^b; a,b\in \mathbb{N}\right\}.$$
\end{proposition}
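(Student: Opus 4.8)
The plan is to establish the two inclusions that together say $M$ is the smallest continuous subalgebra of $[0,1]$ containing $\set{0,1}$. Writing $S$ for that smallest subalgebra (the intersection of all continuous subalgebras of $[0,1]$ containing $\set{0,1}$, which is again a subalgebra), I would first show that $M$ is itself a continuous subalgebra containing $\set{0,1}$, giving $S\subseteq M$, and then that $M\subseteq S$, yielding equality.

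For $M$ to be a subalgebra I would check closure under the three operations. Clearly $0=0/2^{0}$ and $1=2^{0}/2^{0}$ lie in $M$. If $x=a/2^{b}\in M$ then $\neg x=(2^{b}-a)/2^{b}\in M$ and $\kappa x=(a+2^{b})/2^{b+1}\in M$, the numerator bounds being immediate from $0\le a\le 2^{b}$. For $\oplus$, given $a/2^{b}$ and $c/2^{d}$ I would put them over the common denominator $2^{\max\set{b,d}}$; their truncated sum is either a dyadic rational of that denominator or the value $1$, and in both cases it lies in $M$. This part is entirely routine.

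The substance of the argument is the reverse inclusion $M\subseteq S$, which I would prove by showing, by induction on $b$, that every $a/2^{b}$ with $0\le a\le 2^{b}$ belongs to $S$. The base case $b=0$ is just $0,1\in S$. For the inductive step I would use the halving operator $\kappa$ together with its dual $\kappa'x=\neg\kappa\neg x=x/2$, which is available in $S$ since $S$ is closed under $\neg$ and $\kappa$. Assuming all dyadics of denominator $2^{b}$ lie in $S$: applying $\kappa'$ to $a/2^{b}$ gives $a/2^{b+1}$, producing every numerator from $0$ to $2^{b}$; applying $\kappa$ to $a/2^{b}$ gives $(a+2^{b})/2^{b+1}$, producing every numerator from $2^{b}$ to $2^{b+1}$. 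The union covers all numerators $0,\dots,2^{b+1}$, so every dyadic of denominator $2^{b+1}$ lies in $S$, completing the induction.

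The main obstacle is precisely this generation step, and in particular the observation that a single halving operation is not enough: $\kappa$ alone only reaches the upper dyadic interval $[\tfrac12,1]$, so one must also invoke the lower halving $\kappa'$ (equivalently, combine $\kappa$ with $\neg$) to reach the odd numerators below $2^{b}$. Once one sees that $\kappa$ and $\kappa'$ together tile $[0,1]$ at each new level of resolution, the induction proceeds cleanly and the two inclusions combine to give $S=M$.
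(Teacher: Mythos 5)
Your proposal is correct and follows essentially the same route as the paper: both verify that $M$ is a continuous subalgebra containing $\set{0,1}$ and then prove $M$ is contained in any such subalgebra by induction on the exponent $b$, reaching the upper dyadic half $[\tfrac12,1]$ by applying $\kappa$ to level-$b$ elements and the lower half via negation. The only cosmetic difference is that you package the lower-half step as the dual operator $\kappa'=\neg\kappa\neg$ (halving), whereas the paper reduces the lower half to the upper half by a single application of $\neg$; unwinding the paper's argument yields exactly the composite $\neg\kappa\neg$ you use.
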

\begin{proof}
    Note that $0,1\in M$ and that $$(M,\oplus|_{M\times M},\neg|_M,0,\kappa|_M)$$ is indeed a continuous subalgebra of $([0,1],\oplus,\neg,0,\kappa)$. 

    We now claim that any superset $M'$, of $\set{0,1}$ that is closed under $\kappa$ and $\neg$ must contain $M$. We prove this by induction on $b$. For $b=0$, we have $a=0$ or $1$ and hence $$\left\{\dfrac{0}{2^0},\dfrac{1}{2^0}\right\}=\set{0,1}\subseteq M'$$ by assumption.

    Now let us assume that 
    $$\left\{\dfrac{a}{2^b}\middle|0\le a\le 2^b; b\le k; a,b\in \mathds{N}\right\}\subseteq M'.$$
    Consider the element $\dfrac{a_0}{2^{k+1}}$. If $2^k\le a_0\le 2^{k+1}$, then $\dfrac{a_0}{2^k}-1\in M'$ by induction hypothesis. Since $M'$ is closed under $\kappa$,
    $$\kappa\left (\dfrac{a_0}{2^k}-1\right)=\dfrac{1+\dfrac{a_0}{2^k}-1}{2}=\dfrac{a_0}{2^{k+1}}\in M'.$$ 
    If $0\le a_0< 2^k$, then $$1-\dfrac{a_0}{2^{k+1}}=\dfrac{a_1}{2^{k+1}}\in M'$$ for some $a_1$ such that $2^k<a_1\le 2^{k+1}$. So, $\dfrac{a_0}{2^{k+1}}\in M'$ as $M'$ is closed under $\neg$. Thus by induction $M\subseteq M'$.

    Therefore, any continuous subalgebra of $[0,1]$ containing $\{0,1\}$ must contain $M$. Since $M$ itself is a continuous subalgebra, it follows that $M$ is the smallest continuous subalgebra of 
$[0,1]$ containing $\{0,1\}$.
\end{proof}
\subsection{Continuous Propositional Logic}
We now recall some basic facts about continuous propositional logic. Let $$\mathscr{L}_0=\{P_i:i\in I\}$$ for some index set $I$. The elements $P_i\in \mathscr{L}_0$ are called propositional variables. Let $\mathscr{L}$ be the language freely generated by $\mathscr{L}_0,$ together with binary connective $\itai$ and unary connectives $\neg$ and $\half$. The logic determined by $\mathscr{L}$ is called a continuous propositional logic(CPL).
In contrast, the language, $\mathscr{L}'$, freely generated by $\mathscr{L}_0,$ together with the binary connective $\itai$ and the unary connective $\neg$ determines a \L ukasiewicz logic.
If $v_0:\mathscr{L_0}\to[0,1]$ is a mapping, then this can be extended uniquely to a mapping $$v:\mathscr{L}\to [0,1]\ \text{and}\  v':\mathscr{L}'\to[0,1]$$ defined recursively by the following clauses: 
\begin{align*}
    v'(\alpha\itai \beta)&=\max\set{0,v'(\alpha)-v'(\beta)};\\
    v'(\neg\alpha)&=1-v'(\alpha);\\
    v(\alpha\itai \beta)&=\max\set{0,v(\alpha)-v(\beta)};\\
    v(\neg\alpha)&=1-v(\alpha);\\
    v\left(\half\alpha\right)&=\half v(\alpha).
\end{align*}
The mappings $v,v'$ are called truth assignments induced by $v_0$. Let $\Sigma\subseteq \mathscr{L}\ (\text{respectively,}\ \mathscr{L}').$ Then we write $$v\vDash \Sigma\ (\text{respectively}, \ v\vDash'\Sigma)$$ if $v(\alpha)=0\ (\text{respectively},\ v'(\alpha)=0)\ \fall \alpha\in\Sigma$ and call $v$ a model of $\Sigma$. We further write $$\Sigma\vDash\alpha, (\text{respectively,}\ \Sigma\vDash'\alpha)$$ if every model of $\Sigma$ is also a model of $\set{\alpha}$. 

Continuous propositional logic (CPL) is axiomatized by the following six axiom schemata:
\begin{enumerate}[label=A\arabic*.]
    \item $(\alpha\itai \beta)\itai\alpha$.\label{A1}
    \item $((\alpha\itai\beta)\itai (\alpha\itai\gamma))\itai(\gamma\itai\beta)$.\label{A2}
    \item $(\alpha\itai(\alpha\itai\beta))\itai(\beta\itai(\beta\itai \alpha))$.\label{A3}
    \item $(\alpha\itai\beta)\itai(\neg\beta\itai\neg\alpha)$.\label{A4}
    \item $\half\alpha\itai(\alpha\itai\half\alpha)$.\label{A5}
    \item $(\alpha\itai\half\alpha)\itai\half\alpha.$ \label{A6}
\end{enumerate}
A formal deduction from $\Sigma$ is a finite sequence of formulae in $\mathscr{L},\ ( \alpha_i: i < n),$ such that for each $i < n$, either
\begin{enumerate}
    \item $\alpha_i$ is an instance of an axiom schema 
    \item or $\alpha_i\in\Sigma$  
    \item or there exist $j , k < i$ such that $\alpha_j=\alpha_i\itai\alpha_k$.
\end{enumerate}
We now define a syntactic consequence relation $\vdash$ between 
$\mathcal{P}(\mathscr{L})$ and $\mathscr{L}$ by declaring
$$\Sigma \vdash \alpha$$
if and only if there exists a formal deduction of $\alpha$ from $\Sigma$.

Similarly, we define a relation $\vdash'$ between 
$\mathcal{P}(\mathscr{L}')$ and $\mathscr{L}'$ by declaring
$$\Sigma \vdash' \alpha$$
if and only if there exists a formal deduction of $\alpha$ from $\Sigma$
in which the axiom schemata \ref{A5} and \ref{A6} are not used.

\begin{proposition}\label{p2.4}\cite{CDM}
    For any $\alpha,\beta,\gamma\in \mathscr{L}$, the following results hold:
    \begin{enumerate}[label=\theproposition.\arabic*]
        \item $\vdash ((\alpha\itai\beta)\itai\gamma)\itai((\alpha\itai\gamma)\itai\beta
        )$\label{2.4.1}
        \item $\vdash \alpha\itai\alpha$\label{2.4.2}
        \item $\vdash ((\alpha\itai\beta)\itai(\gamma\itai\beta))\itai(\alpha\itai\gamma)$\label{2.4.3}
        \item $\vdash (\alpha\itai\beta)\itai\neg\neg\alpha$\label{2.4.4}
        \item $\vdash \alpha\itai\neg\neg\alpha$\label{2.4.5}
        \item $\vdash (\neg\alpha\itai\beta)\itai(\neg\beta\itai\alpha)$\label{2.4.6}
        \item $\vdash(\neg\alpha\itai\neg\beta)\itai(\neg\neg\beta\itai\alpha)$\label{2.4.7}
        \item $\vdash \neg\neg\alpha\itai\alpha$\label{2.4.8}
        \item $\vdash (\neg\alpha\itai\neg\beta)\itai(\beta\itai\alpha)$.\label{2.4.9}
    \end{enumerate}
\end{proposition}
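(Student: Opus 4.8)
The plan is to prove each of the nine items by exhibiting an explicit formal deduction from the axiom schemata \ref{A1}--\ref{A4} and the detachment rule, i.e.\ inferring $\alpha_i$ from two earlier formulae of the shape $\alpha_i\itai\alpha_k$ and $\alpha_k$. The guiding observation is that, under the reading of $\varphi\itai\psi$ as the \emph{reversed} implication ``$\psi$ implies $\varphi$'', the schemata \ref{A1}--\ref{A4} translate exactly into the standard axioms of \L ukasiewicz propositional logic used in \cite{CDM} and the detachment rule becomes ordinary modus ponens. Consequently every one of \ref{2.4.1}--\ref{2.4.9} is a classical derived theorem of that calculus, and the task reduces to transcribing those derivations into the $\itai$-notation. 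In particular no appeal to the halving axioms \ref{A5}, \ref{A6} is made, so the whole proposition lives inside the \L ukasiewicz fragment $\mathscr{L}'$.

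First I would isolate the two workhorses reused throughout: detachment itself, and a derived \emph{transitivity} (hypothetical syllogism) obtained by detaching twice against an instance of \ref{A2}. With these in hand I would proceed in the listed order, since each later item reuses earlier ones. The permutation law \ref{2.4.1} is derived from suitable instances of \ref{A1}--\ref{A3} together with detachment; reflexivity \ref{2.4.2}, namely $\vdash\alpha\itai\alpha$, is then bootstrapped from \ref{A1}, \ref{A2} and the permutation law \ref{2.4.1}; and the composition law \ref{2.4.3} follows from \ref{A2} and \ref{2.4.1} by transitivity. These three purely implicational facts form the core that drives everything else.

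For the negation block I would feed the implicational core through the contraposition schema \ref{A4}. Applying \ref{A4} to instances of the core lemmas yields the mixed double-negation weakening \ref{2.4.4} and then \ref{2.4.5}, that is $\vdash\alpha\itai\neg\neg\alpha$; a second pass through \ref{A4} (now instantiated at $\neg\alpha$) together with \ref{2.4.5} produces \ref{2.4.8}, that is $\vdash\neg\neg\alpha\itai\alpha$. The contraposition laws \ref{2.4.6}, \ref{2.4.7}, \ref{2.4.9} are then assembled from \ref{A4} and these double-negation laws by transitivity and permutation, again in the order given so that each step only invokes items already established.

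I expect the main obstacle to be twofold: the bootstrapping of reflexivity \ref{2.4.2}, which is the classic delicate first step once permutation is available, and the careful handling of double negation. Because these are \emph{syntactic} derivability claims---the entire point being a proof-theoretic route rather than a semantic one---there is no shortcut via truth assignments, and each deduction hinges on choosing the correct instantiation of a schema. Unlike in classical logic one cannot treat $\neg\neg\alpha$ and $\alpha$ as interchangeable: the two directions \ref{2.4.5} and \ref{2.4.8} must be derived separately and then threaded back through \ref{A4} in the right order, which is precisely where the contraposition items become finicky. Since all of this is standard \L ukasiewicz proof theory, I would ultimately present it either as explicit deduction sequences or, more economically, by verifying the axiom dictionary above and invoking the corresponding lemmas of \cite{CDM}.
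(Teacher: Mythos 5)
Your proposal matches the paper's own treatment: the paper proves this proposition simply by noting that all nine items follow by adapting the proof of Proposition 4.3.4 in \cite{CDM}, reversing the direction of the connective, which is precisely your "axiom dictionary" observation that \ref{A1}--\ref{A4} under the reversed reading of $\itai$ are the standard \L ukasiewicz axioms with detachment as modus ponens. Your additional outline of the order of derivation (implicational core first, then the negation block via \ref{A4}) is consistent with the standard development in \cite{CDM} and adds detail the paper omits.
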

{\textbf{Note}:-
All these results follow by adapting the proof of Proposition 4.3.4 in \cite{CDM}, reversing the direction of the connective $\to$. We now define a relation $\equiv$ on $\mathscr{L}$ by declaring that
$$\alpha \equiv \beta 
\quad \text{if and only if} \quad
\vdash \alpha \itai \beta \ \text{and}\ \vdash \beta \itai \alpha .
$$
By Proposition \ref{p2.4}, the relation $\equiv$ is an equivalence relation.

For each $\alpha \in \mathscr{L}$, denote its equivalence class by $[\alpha]$, and let $\mathscr{L}/\!\equiv$ be the set of all equivalence classes of $\mathscr{L}$. We now define the algebra $\mathcal{C} = (\mathscr{L}/\!\equiv,\;\odot,\;\neg,\;\bot,\;\half),$
where for all $\alpha,\beta \in \mathscr{L}$ the operations are given by
\begin{align*}
[\alpha] \odot [\beta] &:= [\alpha \itai \neg \beta],\\
\neg[\alpha] &:= [\neg \alpha],\\
\bot &:= [\neg(\gamma \itai \gamma)] \quad \text{for some } \gamma \in \mathscr{L},\\
\half[\alpha] &:= \left[\half \alpha\right].
\end{align*}

\begin{theorem}
     The operations defined above on $\mathscr{L}/_{\equiv}$ are well-defined and $\mathcal{C}$ is a continuous algebra.
\end{theorem}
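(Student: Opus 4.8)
The plan is to carry out a standard Lindenbaum--Tarski verification, but organized so as to isolate the one genuinely new syntactic fact. Note first that, because $[\alpha]\odot[\beta]=[\alpha\itai\neg\beta]$ and $\bot=[\neg(\gamma\itai\gamma)]$, the reduct $(\mathscr{L}/\!\equiv,\odot,\neg,\bot)$ is the \emph{co-oriented} \L ukasiewicz Lindenbaum algebra: the operation $\odot$ plays the role of $\oplus$ and $\bot$ the role of $\id$ in the definition of a continuous algebra. Accordingly I would split the work into the MV-reduct axioms \ref{C1}--\ref{C6}, which are the usual MV-algebra identities, and the halving axioms \ref{C7}--\ref{C8}, which carry all the new content. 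For \ref{C1}--\ref{C6} I would invoke the standard Chang/CDM treatment: using axioms \ref{A1}--\ref{A4} together with Proposition \ref{p2.4}, one checks exactly as in \cite{CDM} (with the direction of $\to$ reversed) that $\equiv$ is a congruence for $\itai$ and $\neg$, and that the quotient satisfies the MV-algebra identities. This makes $\odot$ and $\neg$ well defined and yields \ref{C1}--\ref{C6} without any appeal to the halving connective.

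For well-definedness I would proceed as follows. The $\neg$-congruence follows from Proposition \ref{2.4.9}: from $\vdash\beta\itai\alpha$, together with $\vdash(\neg\alpha\itai\neg\beta)\itai(\beta\itai\alpha)$ and modus ponens, one obtains $\vdash\neg\alpha\itai\neg\beta$, and symmetrically. The $\itai$-congruence is the standard monotonicity argument using \ref{A2} and the transitivity theorems of Proposition \ref{p2.4}; combined with the $\neg$-congruence this makes $\odot$ well defined. That $\bot$ is independent of $\gamma$ reduces to the observation that any two theorems are $\equiv$-equivalent: from \ref{A1} one derives the weakening rule ``$\vdash\psi$ implies $\vdash\psi\itai\phi$'' by modus ponens, whence $\vdash\gamma\itai\gamma$ and $\vdash\gamma'\itai\gamma'$ give $\gamma\itai\gamma\equiv\gamma'\itai\gamma'$ and hence, by $\neg$-congruence, $\bot$ is well defined. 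The well-definedness of $\half$ I would deliberately defer, as it will drop out of the central lemma below.

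Next I would dispatch \ref{C7} and reduce \ref{C8}. Unwinding with double negation (Propositions \ref{2.4.5}, \ref{2.4.8}) gives $[\alpha]\odot\neg\half[\alpha]=[\alpha]\odot[\neg\half\alpha]=[\alpha\itai\neg\neg\half\alpha]=[\alpha\itai\half\alpha]$, so \ref{C7} is equivalent to $\half\alpha\equiv\alpha\itai\half\alpha$, which is precisely the conjunction of \ref{A5} and \ref{A6}. For \ref{C8}, the same unwinding yields
$$(\neg\half[\alpha]\odot\half[\beta])\odot\neg(\neg[\alpha]\odot[\beta])=\bigl[(\neg\half\alpha\itai\neg\half\beta)\itai(\neg\alpha\itai\neg\beta)\bigr],$$
while $\neg\bot=[\neg\neg(\gamma\itai\gamma)]=[\gamma\itai\gamma]=[\top]$. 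Hence \ref{C8} is equivalent to the provability of $(\neg\half\alpha\itai\neg\half\beta)\itai(\neg\alpha\itai\neg\beta)$, and by contraposition (Proposition \ref{2.4.9} and \ref{A4}) this is in turn equivalent to the single schema
$$\vdash(\half\beta\itai\half\alpha)\itai(\beta\itai\alpha),$$
which I will call the halving lemma. I would then observe that the same lemma, with $\alpha$ and $\beta$ interchanged, reads $\vdash(\half\alpha\itai\half\beta)\itai(\alpha\itai\beta)$; combined with $\vdash\alpha\itai\beta$ and modus ponens it gives $\vdash\half\alpha\itai\half\beta$, i.e.\ monotonicity of $\half$, and hence the $\half$-congruence that completes well-definedness. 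Thus the halving lemma simultaneously supplies \ref{C8} and the last piece of well-definedness, with no circularity since it is proved purely at the level of formulae.

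The main obstacle is therefore the halving lemma itself, the one derivation that genuinely uses \ref{A5} and \ref{A6} rather than the MV-skeleton. The route I would take is to exploit $\half\gamma\equiv\gamma\itai\half\gamma$ to establish the provable ``doubling'' identity $\beta\itai\alpha\equiv(\half\beta\itai\half\alpha)\oplus(\half\beta\itai\half\alpha)$, where $\phi\oplus\psi:=\neg(\neg\phi\itai\psi)$; since each $\half$-formula is forced into the lower half of the value scale, the doubling involves no truncation, which is what makes the identity derivable by MV manipulation from \ref{A5}, \ref{A6}. The lemma then follows from the MV theorem $\vdash\phi\itai(\phi\oplus\phi)$ by transitivity. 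I expect the careful formal deduction of this doubling identity to be the technical heart of the proof; everything else is routine bookkeeping with Proposition \ref{p2.4} and the congruence properties already in place.
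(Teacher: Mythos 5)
Your reductions are correct, and the organization is genuinely different from the paper's. The paper handles the two non-MV items---well-definedness of $\half$ and axiom \ref{C8}---by a semantic detour: it introduces fresh variables $\gamma,\delta$ constrained by instances of \ref{A5}/\ref{A6}, checks the desired consequence by computing truth values in $[0,1]$, invokes the \emph{finite strong completeness of \L ukasiewicz logic} \cite{TF} to convert $T\vDash'$ into $T\vdash'$, and then substitutes $\half\alpha,\half\beta$ back for $\gamma,\delta$ in the resulting deduction. You instead funnel both items through a single schema, $\vdash(\half\beta\itai\half\alpha)\itai(\beta\itai\alpha)$, observing that it yields the $\half$-congruence by one application of modus ponens and yields \ref{C8} after stripping double negations and contraposing. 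That reduction is a genuinely nice reorganization (and your treatment of \ref{C7} as exactly \ref{A5}$+$\ref{A6} matches the paper's), and it is not circular, since the lemma lives at the level of formulae.

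The gap is that the halving lemma---equivalently your doubling identity $\beta\itai\alpha\equiv(\half\beta\itai\half\alpha)\oplus(\half\beta\itai\half\alpha)$---is precisely where all of the difficulty is concentrated, and you only assert its derivability. ``No truncation occurs'' is a statement about values in $[0,1]$, not a deduction; what must be shown is that the schema follows \emph{syntactically} from \ref{A1}--\ref{A6}, where the only information about $\half\gamma$ is $\half\gamma\equiv\gamma\itai\half\gamma$. Turning ``valid in every MV-chain under these hypotheses'' into ``derivable'' requires either a nontrivial explicit Hilbert-style deduction (which would have to include, among other things, a syntactic expression of the no-truncation fact and the uniqueness of the fixed point of $x\mapsto\gamma\itai x$), or an appeal to completeness of \L ukasiewicz logic for deductions from finite theories---which is exactly the external tool the paper imports, so that an honest completion of your argument likely converges on the paper's method at the crucial step. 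As written, the proposal is a correct reduction of the theorem to an unproved (though true) lemma, and that lemma is the theorem's real content.
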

\begin{proof}
    Let $\alpha'\in[\alpha]$ and $\beta'\in[\beta]$.
    Then, 
    \begin{align*}
        \alpha_1&=\alpha'\itai\alpha &[\text{Hypothesis}]\\
        \alpha_2&=\beta'\itai\beta&[\text{Hypothesis}]\\
        \alpha_3&=((\alpha'\itai\neg\beta)\itai(\alpha\itai\neg\beta)) \itai(\alpha'\itai\alpha)&[\text{By Proposition } \ref{2.4.3}]\\
        \alpha_4&=(\alpha'\itai\neg\beta)\itai(\alpha\itai\neg\beta)&[\text{By MP on }\alpha_1,\alpha_3]\\
        \alpha_5&=(\neg\beta\itai\neg\beta')\itai(\beta'\itai\beta)&[\text{By Proposition } \ref{2.4.9}]\\
        \alpha_6&=\neg\beta\itai\neg\beta'&[\text{By MP on } \alpha_5,\alpha_2]\\
        \alpha_7&=((\alpha'\itai\neg\beta')\itai(\alpha'\itai\neg\beta)) \itai(\neg\beta\itai\neg\beta')&[\text{By axiom schema } \ref{A2}]\\
        \alpha_8&=((\alpha'\itai\neg\beta')\itai(\alpha'\itai\neg\beta))&[\text{By MP on }\alpha_7,\alpha_6]\\
        \alpha_9&=(((\alpha'\itai\neg\beta')\itai(\alpha\itai\neg\beta)) \itai ((\alpha'\itai\neg\beta')\itai(\alpha'\itai\neg\beta))) \itai((\alpha'\itai\neg\beta)\itai(\alpha\itai\neg\beta))&[\text{By axiom schema } \ref{A2}]\\
        \alpha_{10}&=((\alpha'\itai\neg\beta')\itai(\alpha\itai\neg\beta)) \itai ((\alpha'\itai\neg\beta')\itai(\alpha'\itai\neg\beta))&[\text{By MP on } \alpha_4,\alpha_9]\\
        \alpha_{11}&=(\alpha'\itai\neg\beta')\itai(\alpha\itai\neg\beta)&[\text{By MP on } \alpha_{10},\alpha_8]
    \end{align*}
    Hence $\{\alpha'\itai\alpha, \beta'\itai\beta\}\vdash (\alpha'\itai\neg\beta')\itai(\alpha\itai\neg\beta).$
    Similarly, other direction can be proved. %odots should be used here
    So, $[\alpha'\odot\beta']=[\alpha\odot\beta]$. If $[\alpha]=[\alpha']$ then $[\neg\alpha]=[\neg\alpha']$ is due to axiom schema \ref{A4} and Proposition \ref{2.4.9}.
    Now, if $\alpha,\beta\in \mathscr{L}$, then $[\alpha\itai\alpha]=[\beta\itai\beta]$ is due to Proposition $\ref{2.4.2}$.
    Therefore, $\bot$ is also well defined.
    Now, since $\mathscr{L}'$ is a \L ukasiewicz logic, we have the finite strong completeness, as proved in \cite{TF}.
    Let $\alpha,\beta,\gamma,\delta\in \mathscr{L}'$. Then, consider $T=\set{\gamma\itai(\alpha\itai\gamma),(\alpha\itai\gamma)\itai\gamma,\delta\itai(\beta\itai\delta),(\beta\itai\delta)\itai\delta,\alpha\itai\beta, \beta\itai\alpha}$.
    Then, we have $$T\vdash'\gamma\itai\delta\iff T\vDash'\gamma\itai\delta.$$ 
    Now, for some $v:\mathscr{L}\to[0,1]$ if $v(\varphi)=0\ \fall \varphi\in T$ , then
    \begin{align*}
        v(\alpha)=v(\beta)\ \text{and}\ v(\gamma)=\max\{0,v(\alpha)-v(\gamma)\}&
        \implies v(\gamma)=v(\alpha)-v(\gamma)\\
        &\implies v(\gamma)=\frac{v(\alpha)}{2}\ \text{and}\ v(\delta)=\frac{v(\beta)}{2}\\
        &\implies v(\delta)=v(\gamma)\\
        &\implies T\vDash' \delta\itai\gamma\ \text{and}\ T\vDash'\gamma\itai\delta\\
        &\implies T\vdash'\delta\itai\gamma\ \text{and}\ T\vdash' \gamma\itai\delta.
    \end{align*}
    Let $\alpha_1,\dots,\alpha_n$ be a proof of $\delta\itai \gamma$.
    Let $[\alpha_i]^{\gamma,\delta}_{\frac{1}{2}\alpha,\frac{1}{2}\beta}$ be the well formed formula obtained by substituting each instance of $\gamma$ and $\delta$ by $\half\alpha$ and $\half\beta$ respectively in $\alpha_i$.\\
    \begin{claim}$[\alpha_1]^{\gamma,\delta}_{\frac{1}{2}\alpha,\frac{1}{2}\beta}, \dots,[\alpha_n]^{\gamma,\delta}_{\frac{1}{2}\alpha,\frac{1}{2}\beta}$ is a proof of $\half\alpha\itai\half\beta$ from $\set{\alpha\itai\beta, \beta\itai\alpha}$.
    \end{claim}
    \begin{proof}
        For each $\alpha_i$, either
        \begin{enumerate}
            \item $\alpha_i$ is an axiom, in which case $[\alpha_i]^{\gamma,\delta}_{\frac{1}{2}\alpha,\frac{1}{2}\beta}$ is also an axiom,
            \item or $\alpha_i\in T$, in which case either $[\alpha_i]^{\gamma,\delta}_{\frac{1}{2}\alpha,\frac{1}{2}\beta}\in\set{\alpha\itai\beta,\beta\itai\alpha}$ or $[\alpha_i]^{\gamma,\delta}_{\frac{1}{2}\alpha,\frac{1}{2}\beta}$ is an axiom, 
            \item or $\ex k,j<i$ such that $\alpha_k=\alpha_i\itai\alpha_j$, in which case $[\alpha_k]^{\gamma,\delta}_{\frac{1}{2}\alpha,\frac{1}{2}\beta}=[\alpha_i]^{\gamma,\delta}_{\frac{1}{2}\alpha,\frac{1}{2}\beta}\itai[\alpha_j]^{\gamma,\delta}_{\frac{1}{2}\alpha,\frac{1}{2}\beta}$.
        \end{enumerate}  
        So, $\set{\alpha\itai\beta,\beta\itai\alpha}\vdash \half\alpha\itai\half\beta$ and similarly, $\set{\alpha\itai\beta,\beta\itai\alpha}\vdash \half\beta\itai\half\alpha$
    \end{proof}
    Therefore, $\half$ is well defined on $\mathscr{L}/_{\equiv}$.
    Now, we need to prove that $\mathcal{C}=(\mathscr{L}/_{\equiv},\oplus,\neg,\bot,\half)$ is a continuous algebra.
    Since $\mathscr{L}$ follows axiom schemata \ref{A1}-\ref{A4}, $\mathcal{C}$ follows axioms  \ref{C1}-\ref{C6}. Now, for axiom \ref{C7}, let $\alpha\in\mathscr{L}$, consider the following proof
    \begin{align*}
        \alpha_1:&\neg\neg\half\alpha\itai\half\alpha&[\text{By Proposition }\ref{2.4.8}]\\
        \alpha_2:&((\alpha\itai\half\alpha)\itai(\alpha\itai\neg\neg\half\alpha))\itai(\neg\neg\half\alpha\itai\half\alpha)&[\text{By axiom schema }\ref{A2}]\\
        \alpha_3:&(\alpha\itai\half\alpha)\itai(\alpha\itai\neg\neg\half\alpha)&[\text{By MP on }\alpha_1,\alpha_2]\\
        \alpha_4:&((\half\alpha\itai(\alpha\itai\neg\neg\half\alpha))\itai((\alpha\itai\half\alpha)\itai(\alpha\itai\neg\neg\half\alpha)))\itai(\half\alpha\itai(\alpha\itai\half\alpha))&[\text{By schema}\ref{2.4.3}]\\
        \alpha_5:&\half\alpha\itai(\alpha\itai\half\alpha)&[\text{By schema}\ref{A5}]\\
        \alpha_6:&(\half\alpha\itai(\alpha\itai\neg\neg\half\alpha))\itai((\alpha\itai\half\alpha)\itai(\alpha\itai\neg\neg\half\alpha))&[\text{By MP on }\alpha_5,\alpha_4]\\
        \alpha_7:&\half\alpha\itai(\alpha\itai\neg\neg\half\alpha)&[\text{By MP on }\alpha_6,\alpha_3]
    \end{align*}
    Similarly, the other direction can also be proved.
    Now, we just have to check for axiom \ref{C8}.
    Let $\alpha,\beta,\gamma,\delta\in \mathscr{L}'$. Then, consider $T=\set{\gamma\itai(\alpha\itai\gamma),(\alpha\itai\gamma)\itai\gamma,\delta\itai(\beta\itai\delta),(\beta\itai\delta)\itai\delta}$.
    Then, we have $$T\vdash'(\neg\gamma\itai\neg\delta)\itai\neg\neg(\neg\alpha\itai\neg\beta)\iff T\vDash'(\neg\gamma\itai\neg\delta)\itai\neg\neg(\neg\alpha\itai\neg\beta).$$ 
    Now, if for some $v:\mathscr{L}\to[0,1]$, $v(\varphi)=0\ \fall \varphi\in T$ then
    \begin{align*}
        & v(\gamma)=\max\{0,v(\alpha)-v(\gamma)\}\\
        &\implies v(\gamma)=v(\alpha)-v(\gamma)\\
        &\implies v(\gamma)=\frac{v(\alpha)}{2}\ \text{and}\ v(\delta)=\frac{v(\beta)}{2}\\
        &\implies v((\neg\gamma\itai\neg\delta)\itai\neg\neg(\neg\alpha\itai\neg\beta))=\max\set{0,\max\set{0,1-v(\gamma)-1+v(\delta)}-\max\set{0,1-v(\alpha)-1+v(\beta)}}\\
        &\implies v((\neg\gamma\itai\neg\delta)\itai\neg\neg(\neg\alpha\itai\neg\beta))=\max\set{0,\max\set{0,v(\delta)-v(\gamma)}-\max\set{0,v(\beta)-v(\alpha)}}\\
        &\implies v((\neg\gamma\itai\neg\delta)\itai\neg\neg(\neg\alpha\itai\neg\beta))=\max\left\{0,\half\max\set{0,v(\beta)-v(\alpha)}-\max\set{0,v(\beta)-v(\alpha)}\right\}\\
        &\implies v((\neg\gamma\itai\neg\delta)\itai\neg\neg(\neg\alpha\itai\neg\beta))=0\\
        &\implies T\vDash' (\neg\gamma\itai\neg\delta)\itai\neg\neg(\neg\alpha\itai\neg\beta)\\
        &\implies T\vdash'(\neg\gamma\itai\neg\delta)\itai\neg\neg(\neg\alpha\itai\neg\beta)\\
    \end{align*}
    Let $\alpha_1,\dots,\alpha_n$ be a proof of $(\neg\gamma\itai\neg\delta)\itai\neg\neg(\neg\alpha\itai\neg\beta)$ in $\mathscr{L}'$.
    \begin{claim}$[\alpha_1]^{\gamma,\delta}_{\frac{1}{2}\alpha,\frac{1}{2}\beta}, \dots,[\alpha_n]^{\gamma,\delta}_{\frac{1}{2}\alpha,\frac{1}{2}\beta}$ is a proof of $\left(\neg\alpha\itai\neg\half\beta\right)\itai\neg\neg(\neg\alpha\itai\neg\beta)$.
    \end{claim}
    This is proven in a similar way as above claim.
    Then, we get that $\left [\left(\neg\half\alpha\itai\neg\half\beta\right)\itai\neg\neg\left(\neg\alpha\itai\neg\beta\right)\right]=\neg\bot$.
    So, $\mathcal{C}$ follows axiom \ref{C8} as well and, hence, it is a continuous algebra.
\end{proof} 
Note that since $v(\alpha\itai \neg \beta)=\max\set{0,v(\alpha)+v(\beta)-1}$ and $v\left(\half\alpha\right)=\dfrac{v(\alpha)}{2}$, the truth values mimic $[1,0]$ and not $[0,1]$ as continuous algebras.
\begin{proposition}
        $\vdash \alpha \iff [\alpha]=\neg\bot$.
    \end{proposition}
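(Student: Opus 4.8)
The plan is to reduce the biconditional to a single equivalence in the Lindenbaum algebra, after first computing $\neg\bot$ explicitly. Since $\bot=[\neg(\gamma\itai\gamma)]$, I would first note $\neg\bot=[\neg\neg(\gamma\itai\gamma)]$ and invoke Propositions \ref{2.4.5} and \ref{2.4.8} to obtain $\neg\neg(\gamma\itai\gamma)\equiv\gamma\itai\gamma$, so that $\neg\bot=[\gamma\itai\gamma]$, the class of the theorem $\gamma\itai\gamma$ (provable by Proposition \ref{2.4.2}, and independent of $\gamma$ by the well-definedness of $\bot$ established above). It then suffices to show that $\vdash\alpha$ holds if and only if $\alpha\equiv\gamma\itai\gamma$, that is, if and only if both $\vdash\alpha\itai(\gamma\itai\gamma)$ and $\vdash(\gamma\itai\gamma)\itai\alpha$.

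The engine of both directions will be the derived rule: if $\vdash\phi$ then $\vdash\phi\itai\psi$ for every $\psi$. To establish it, I would observe that $(\phi\itai\psi)\itai\phi$ is an instance of axiom schema \ref{A1}, and then apply the deduction rule—read in the form ``from $\chi\itai\theta$ and $\theta$ infer $\chi$''—with $\chi=\phi\itai\psi$ and $\theta=\phi$, using the hypothesis $\vdash\phi$.

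For the forward direction I would assume $\vdash\alpha$ and apply this rule twice: with $\phi=\alpha$ and $\psi=\gamma\itai\gamma$ to get $\vdash\alpha\itai(\gamma\itai\gamma)$, and with $\phi=\gamma\itai\gamma$ and $\psi=\alpha$ to get $\vdash(\gamma\itai\gamma)\itai\alpha$ (this second half in fact holds for every $\alpha$, since it only uses that $\gamma\itai\gamma$ is a theorem). Together these give $\alpha\equiv\gamma\itai\gamma$, that is, $[\alpha]=\neg\bot$. For the converse I would assume $[\alpha]=\neg\bot=[\gamma\itai\gamma]$, hence $\alpha\equiv\gamma\itai\gamma$ and in particular $\vdash\alpha\itai(\gamma\itai\gamma)$; combining this with $\vdash\gamma\itai\gamma$ through the deduction rule (now with $\chi=\alpha$ and $\theta=\gamma\itai\gamma$) yields $\vdash\alpha$.

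I do not anticipate a genuine obstacle here: this is the standard fact that the theorems form the top element of the Lindenbaum algebra, and the argument uses only the MV-fragment. The points that require care are getting the reversed truth–orientation right—this fixes the precise shape of the deduction rule and explains why it is the ``weakening'' axiom \ref{A1}, rather than any specifically continuous principle, that does the work—and the preliminary identification of $\neg\bot$ with $[\gamma\itai\gamma]$ through double negation. In particular the halving connective $\half$ and the axiom schemata \ref{A5} and \ref{A6} play no role.
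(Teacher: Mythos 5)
Your proposal is correct and follows essentially the same route as the paper: both directions rest on the weakening rule obtained from axiom schema \ref{A1} together with the (reversed) modus ponens, exactly as in the paper's proof. The only cosmetic difference is that you first normalize $\neg\bot$ to $[\gamma\itai\gamma]$ via Propositions \ref{2.4.5} and \ref{2.4.8}, whereas the paper works directly with the representative $\neg\neg(\alpha\itai\alpha)$; this changes nothing of substance.
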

    \begin{proof}
        Let $\alpha\in \mathscr{L}$ such that $\vdash \alpha$.
        Then, $\vdash (\alpha\itai\neg\neg(\alpha\itai\alpha))\itai\alpha$ implies that $\vdash \alpha\itai\neg\neg(\alpha\itai\alpha)$.
        Similarly, since $\vdash \neg\neg(\alpha\itai\alpha)$, $\vdash(\neg\neg(\alpha\itai\alpha)\itai\alpha)\itai \neg\neg(\alpha\itai\alpha)$ implies $\vdash \neg\neg(\alpha\itai\alpha)\itai\alpha$.
        So, $[\alpha]=\neg\bot$.
        For the converse, if $[\alpha]=\neg\bot$, then $\vdash \alpha\itai\neg\neg(\alpha\itai\alpha)$ and $\vdash\neg\neg(\alpha\itai\alpha)$ imply $\vdash\alpha$.
    \end{proof}
\section{Ideals and Homomorphisms}
In this section, we introduced the concepts of continuous ideals, prime continuous ideals, and continuous homomorphisms in the framework of continuous algebras to define subdirect products of such algebras. Finally, we characterized continuous algebras as subdirect products of continuous chains, a result that forms the backbone of the entire article.
\begin{definition}[Continuous Ideals]
    Let $\A=(A,\oplus,\neg,\id,\kappa)$ be a continuous algebra. Then, $I\subseteq A$ is said to be a continuous ideal if $I$ is an ideal of the MV algebra $(A,\oplus,\neg,\id)$, i.e. 
    \begin{enumerate}
        \item $\id\in I$,
        \item $\fall x,y\in I$, $x\oplus y\in I$.
        \item $\fall x\in I, y\in A$ such that $y\le x$, $y\in I$.
    \end{enumerate}
\end{definition}
\begin{definition}[Prime Continuous Ideals]
    Let $\A=(A,\oplus,\neg,\id,\kappa)$. $I\subseteq A$ is said to be a prime continuous ideal of $\A$ if and only if it is a prime MV ideal, i.e. $I\ne A$ and for all $x,y\in A$, $\neg(\neg x\oplus y)\in I$ or $\neg(\neg y \oplus x)\in I$.
\end{definition}
\begin{definition}[Continuous Homomorphisms and Isomorphisms]
    Let $\A=(A,\oplus,\neg,\id,\kappa),\ \mathscr{B}=(B,\oplus',\neg',\id',\kappa')$ be two continuous algebras. Then a function $h:A\to B$ is said to be a continuous homomorphism between $\A$ and $\mathscr{B}$ if $\fall x,y\in A$
    \begin{enumerate}
        \item $h(\id)=\id'$.
        \item $h(x\oplus y)=h(x)\oplus' h(y)$
        \item $h(\neg x)=\neg'h(x)$
        \item $h(\kappa x)=\kappa'h(x)$.
    \end{enumerate}
    If a continuous homomorphisms $h$ between continuous algebras $\A$ and $\mathscr{B}$ is bijective, then $h$ is said to be a continuous isomorphism between $\A$ and $\mathscr{B}$. If such an isomorphism exists, then $\A$ and $\mathscr{B}$ are said to be isomorphic to each other. This is denoted by $\A\cong \mathscr{B}$.
\end{definition}
The results concerning continuous homomorphisms that involve the operator 
$\kappa$ are established below. The remaining results are routine and follow from the corresponding arguments in the theory of MV-algebras.
\begin{proposition}
    Kernel of a continuous homomorphism $h:A\to B$, i.e., $ker(h):= h^{-1}(\set{\id'})$ is a continuous ideal in $\A$.
\end{proposition}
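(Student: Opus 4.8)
The plan is to observe first that, by definition, a continuous ideal of $\A$ is nothing more than an ideal of the underlying MV-algebra $(A,\oplus,\neg,\id)$; the operator $\kappa$ imposes \emph{no} additional requirement on $I$. Consequently the proof reduces to verifying the three MV-ideal axioms for $\ker(h)=h^{-1}(\set{\id'})$, and the $\kappa$-compatibility clause $h(\kappa x)=\kappa' h(x)$ will play no role whatsoever. I would state this reduction explicitly at the outset so that the reader sees why the argument is purely MV-algebraic.

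For the first axiom, I would simply note that $h(\id)=\id'$ is one of the defining clauses of a continuous homomorphism, so $\id\in\ker(h)$. For closure under $\oplus$, take $x,y\in\ker(h)$, so $h(x)=h(y)=\id'$; then $h(x\oplus y)=h(x)\oplus' h(y)=\id'\oplus'\id'=\id'$, where the last equality is axiom \ref{C3} applied in $\mathscr{B}$. Hence $x\oplus y\in\ker(h)$.

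The only step requiring a small computation is downward closure under the order $\le$. Here I would take $x\in\ker(h)$ and $y\in A$ with $y\le x$, unfold the order via \eqref{<} to get $\neg y\oplus x=\nid$, and then apply $h$. Using the homomorphism clauses together with $h(x)=\id'$, the left-hand side becomes $\neg' h(y)\oplus'\id'=\neg' h(y)$ by \ref{C3}, while $h(\nid)=h(\neg\id)=\neg' h(\id)=\neg'\id'=\nid'$. This forces $\neg' h(y)=\nid'$, and applying $\neg'$ together with axiom \ref{C4} (and $\neg'\nid'=\id'$) yields $h(y)=\id'$, i.e.\ $y\in\ker(h)$.

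There is essentially no serious obstacle in this argument; its content is entirely routine and mirrors the classical MV-algebra fact that the kernel of an MV-homomorphism is an MV-ideal. The single point worth flagging, and the one I would emphasize in the write-up, is the conceptual observation that continuous ideals carry no condition involving $\kappa$, so the verification never needs the fourth homomorphism clause. If one instead wished to show that $\ker(h)$ is compatible with $\kappa$ in some stronger sense (relevant later for quotient constructions), that would require genuine use of axioms \ref{C7}--\ref{C8}; but for the stated proposition it is unnecessary.
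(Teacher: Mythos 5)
Your proof is correct and is precisely the routine MV-algebraic verification that the paper has in mind: the paper omits a proof of this proposition, noting that results not involving $\kappa$ follow from the corresponding arguments for MV-algebras, and your opening observation that a continuous ideal carries no $\kappa$-condition is exactly the justification for that omission. The three axiom checks you give (using $h(\id)=\id'$, axiom C3 in $\mathscr{B}$, and unfolding the order via the definition of $\le$ together with C4) are all sound.
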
 
\begin{proposition}
    The image of a continuous homomorphism is also a continuous algebra.
\end{proposition}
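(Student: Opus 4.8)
The plan is to show that the image $h(A)=\set{h(x):x\in A}$, equipped with the operations of $\mathscr{B}$ restricted to it, is itself a continuous algebra. Since the defining conditions \ref{C1}--\ref{C8} are all universally quantified identities, it suffices to check two things: first, that $h(A)$ is closed under $\oplus'$, $\neg'$, $\kappa'$ and contains $\id'$, so that $(h(A),\oplus'|,\neg'|,\id',\kappa'|)$ is a well-defined algebra of the correct signature; and second, that each axiom holds for all elements of $h(A)$. The MV-algebra portion is entirely standard, so I would concentrate the argument on the new operation $\kappa'$.

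For closure I would invoke the homomorphism conditions directly. We have $\id'=h(\id)\in h(A)$. Given $b_1=h(x_1)$ and $b_2=h(x_2)$ in $h(A)$, the homomorphism conditions give $b_1\oplus' b_2=h(x_1\oplus x_2)\in h(A)$, $\neg' b_1=h(\neg x_1)\in h(A)$, and crucially $\kappa' b_1=\kappa' h(x_1)=h(\kappa x_1)\in h(A)$. Thus $h(A)$ is closed under every operation, and in particular the fourth homomorphism condition $h(\kappa x)=\kappa' h(x)$ is exactly what guarantees closure under the halving operator.

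For the axioms I would transport each identity through $h$. Because every element of $h(A)$ has the form $h(x)$ and the axioms already hold in $\A$, one applies an instance of an axiom in $\A$ and pushes $h$ across the operations. For \ref{C7} with $b=h(x)$, for example, $\kappa' b=\kappa' h(x)=h(\kappa x)=h(x\oplus\neg\kappa x)=h(x)\oplus'\neg' h(\kappa x)=h(x)\oplus'\neg'\kappa' h(x)=b\oplus'\neg'\kappa' b$, and \ref{C8} is handled the same way; the identities \ref{C1}--\ref{C6} transfer identically. Alternatively, since continuous algebras form a variety, one may simply observe that any subset of a continuous algebra closed under the operations is again a continuous algebra, so the closure check already settles the matter. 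I do not anticipate a genuine obstacle here: the statement is a routine homomorphic-image/subalgebra fact, and the only point needing a moment of care is confirming that $\kappa'$ behaves correctly, which is immediate from the fourth homomorphism condition together with \ref{C7} and \ref{C8}.
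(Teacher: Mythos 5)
Your proof is correct and is exactly the routine argument the paper has in mind; the paper states this proposition without proof, classifying it among the results that ``are routine and follow from the corresponding arguments in the theory of MV-algebras.'' One small streamlining: the axiom-transport computation you carry out for \ref{C7} is not actually needed, because $\mathscr{B}$ is already assumed to be a continuous algebra, so the universally quantified identities \ref{C1}--\ref{C8} hold for all elements of $B$ and hence automatically for all elements of the subset $h(A)$; as you observe in your closing alternative, the entire content of the proposition is the closure of $h(A)$ under $\oplus'$, $\neg'$, $\kappa'$ together with $\id'=h(\id)\in h(A)$, with the fourth homomorphism condition $h(\kappa x)=\kappa' h(x)$ supplying the only non-MV ingredient, namely closure under the halving operator.
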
 
\begin{proposition}\label{p3.3}
    Composition of two continuous homomorphisms is also a continuous homomorphism.
\end{proposition}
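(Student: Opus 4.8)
The plan is to take two continuous homomorphisms $f\colon A\to B$ and $g\colon B\to C$, where $\mathscr{A}=(A,\oplus,\neg,\id,\kappa)$, $\mathscr{B}=(B,\oplus',\neg',\id',\kappa')$ and $\mathscr{C}=(C,\oplus'',\neg'',\id'',\kappa'')$ are continuous algebras, and to verify directly that $h:=g\circ f$ satisfies the four defining clauses of a continuous homomorphism. Since each clause asserts that $h$ commutes with one of the designated operations, the strategy is uniform across all four cases: push the operation through $f$ using that $f$ is a continuous homomorphism, and then push it through $g$ using that $g$ is a continuous homomorphism.

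Concretely, for the constant I would compute $h(\id)=g(f(\id))=g(\id')=\id''$, invoking clause (1) for $f$ and then for $g$. For $\oplus$, given $x,y\in A$, the chain $h(x\oplus y)=g(f(x\oplus y))=g(f(x)\oplus' f(y))=g(f(x))\oplus'' g(f(y))=h(x)\oplus'' h(y)$ uses clause (2) for $f$ followed by clause (2) for $g$. The verification for $\neg$ is identical in form, giving $h(\neg x)=g(\neg' f(x))=\neg'' g(f(x))=\neg'' h(x)$.

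The only clause involving the genuinely new structure is the one governing $\kappa$, so this is the step I would treat most carefully, although I do not expect it to present any real obstacle: applying clause (4) first to $f$ and then to $g$ yields $h(\kappa x)=g(f(\kappa x))=g(\kappa' f(x))=\kappa'' g(f(x))=\kappa'' h(x)$. Having confirmed all four clauses, I conclude that $h=g\circ f$ is a continuous homomorphism. The proposition is essentially an immediate consequence of the definition; the presence of $\kappa$ changes nothing essential, since in the definition of a continuous homomorphism $\kappa$ is placed on exactly the same footing as the MV-operations, and so it propagates through composition in the same routine manner.
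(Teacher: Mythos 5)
Your proposal is correct and follows exactly the same route as the paper's proof: a direct verification of all four clauses of the definition, pushing each operation through the inner and then the outer homomorphism, with the $\kappa$ clause handled on the same footing as the MV-operations.
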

\begin{proof}
    Let $\A_1=(A_1,\oplus_1,\neg_1,\id_1,\kappa_1),\ \A_2=(A_2,\oplus_2,\neg_2,\id_2,\kappa_2)$ and $\A_3=(A_3,\oplus_3,\neg_3,\id_3,\kappa_3)$ be three continuous algebras.
    Let $g:A_1\to A_2$ and $h:A_2\to A_3$ be continuous homomorphisms.
    Then, $h\circ g(\id_1)=h(\id_2)=\id_3$ and, for any $x,y\in A_1$, 
$$ h\circ g(x\oplus_1 y)=h(g(x)\oplus_2g(y))=h(g(x))\oplus_3h(g(y))=h\circ g(x)\oplus_3h\circ g(y),$$
        $$h\circ g(\neg_1x)=h(\neg_2g(x))=\neg_3h(g(x))=\neg_3h\circ g(x)\ \text{and}$$
        $$h\circ g(\kappa_1x)=h(\kappa_2g(x))=\kappa_3h(g(x))=\kappa_3h\circ g(x).$$
\end{proof}
\begin{proposition}\label{l3.1}
    If $h:A\to B$ is a homomorphism, then $ker(h)=\set{\id'}$ if and only if $h$ is injective.
\end{proposition}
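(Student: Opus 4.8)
The plan is to prove the two implications separately, dispatching the easy direction first and reserving the MV-algebraic \emph{distance} term for the substantive one. Throughout I read the displayed $\set{\id'}$ in the statement as the zero $\id$ of $A$: since $ker(h)\subseteq A$ while $\id'\in B$, the intended assertion is $ker(h)=\set{\id}$.

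For the backward implication, I would assume $h$ injective. Being a continuous homomorphism, $h$ satisfies $h(\id)=\id'$, so $\id\in ker(h)$. Conversely, if $x\in ker(h)$ then $h(x)=\id'=h(\id)$, and injectivity forces $x=\id$; hence $ker(h)=\set{\id}$. This direction uses nothing beyond the definition of $ker(h)$ and the preservation of the zero.

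For the forward implication, suppose $ker(h)=\set{\id}$ and take $x,y\in A$ with $h(x)=h(y)$, aiming to conclude $x=y$. The device I would use is the MV-distance term $\neg(\neg x\oplus y)\oplus\neg(\neg y\oplus x)$, which by Property \ref{propdistance0iffsamepoints} equals $\id$ exactly when its two arguments coincide. Since $h$ preserves $\oplus$ and $\neg$, it preserves every term built from these operations, so the image under $h$ of the distance between $x$ and $y$ equals the distance between $h(x)$ and $h(y)$ computed in $\mathscr{B}$. As $h(x)=h(y)$, Property \ref{propdistance0iffsamepoints} applied in $\mathscr{B}$ shows that this latter distance is $\id'$, so the distance between $x$ and $y$ lies in $ker(h)=\set{\id}$ and is therefore $\id$. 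Property \ref{propdistance0iffsamepoints} applied back in $\A$ then yields $x=y$.

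The argument is essentially routine, depending only on the underlying MV-structure and not at all on the halving operator $\kappa$. The sole point requiring care is the observation that the distance is a term in $\oplus$ and $\neg$ and is thus transported faithfully by $h$, together with keeping the two distinct zeros $\id$ and $\id'$ straight; I do not anticipate any genuine obstacle beyond this bookkeeping.
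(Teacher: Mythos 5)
Your proof is correct and is precisely the standard MV-algebraic argument the paper has in mind when it declares this result ``routine'': the easy direction from preservation of $\id$, and the substantive direction via the distance term $\neg(\neg x\oplus y)\oplus\neg(\neg y\oplus x)$ together with Property \ref{propdistance0iffsamepoints}. You are also right that the statement's $\set{\id'}$ should be read as $\set{\id}$, the zero of $A$, which is how the paper itself uses the proposition later.
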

Now, Let $I\subseteq A$ be an ideal. Then, we can define a relation $\equiv_I$ on $A$ by 
$$x\equiv_I y\iff \neg(\neg x\oplus y)\oplus\neg(\neg y\oplus x)\in I.$$
Then note that $\equiv_I$ is symmetric by definition .
For $x,y,z\in A$,
$$\neg(\neg x\oplus x)\oplus\neg(\neg x\oplus x)=\neg\nid\oplus\neg\nid=\id\in I.$$
Hence $x\equiv_I x$.
Now, if $x\equiv_I y$ and $y\equiv_I z$, then we have
\begin{align*}
    (\neg x\oplus z)\oplus \neg(\neg x\oplus y)\oplus \neg(\neg y\oplus z)&=\neg x\oplus \neg(x\oplus y)\oplus z\oplus \neg(\neg y\oplus z)&[\text{By axiom }\ref{C1}]\\
    &=\neg(x\oplus \neg y)\oplus \neg y\oplus y\oplus \neg(y\oplus \neg z)&[\text{By axiom }\ref{C6}]\\
    &=\nid&[\text{By Property }\ref{p1}]
    \end{align*}
    \begin{align*}
    \text{Hence}\ & \neg(\neg x\oplus z)\le \neg(\neg x\oplus y)\oplus \neg (\neg y\oplus z)\ \text{and}\ \neg(\neg z\oplus x)\le \neg(\neg y\oplus x)\oplus\neg(\neg z\oplus y)&[\text{By }\ref{<}]\\
    &\implies \neg(\neg x\oplus z)\oplus \neg(\neg z\oplus x)\le \neg(\neg x\oplus y)\oplus \neg (\neg y\oplus z)\oplus  \neg(\neg y\oplus x)\oplus\neg(\neg z\oplus y)&\\
    &\implies \neg(\neg x\oplus z)\oplus \neg(\neg z\oplus x)\in I&\\
    &\implies x\equiv_I z.&
\end{align*}
Hence $\equiv_I$ is an equivalence relation. We denote by $[x]_I$ the equivalence class of $x$ with respect to $\equiv_I$.
\begin{definition}[Quotient of a Continuous Algebra]
Let $\A=(A,\oplus,\neg,\id,\kappa)$ be a continuous algebra, and let 
$I \subseteq A$ be a continuous ideal of $\A$. The quotient of $\A$ by $I$,
denoted by $\A/I$, is defined as
$\A/I := (A/I,\oplus_I,\neg_I,[\id],\kappa_I)$,
where
$A/I := \{[x]_I \mid x \in A\}$.
The operations $\oplus_I$, $\neg_I$, and $\kappa_I$ on $A/I$ are defined by
for all $x,y \in A$:

\begin{enumerate}
    \item $[x]_I\oplus_I[y]_I=[x\oplus y]_I$
    \item $\neg_I[x]_I=[\neg x]_I$
    \item $\kappa_I[x]_I=[\kappa x]_I$.
\end{enumerate}
\end{definition}

\begin{proposition}
    The operations on $A/I$ defined above are well-defined and $\A/I$ is a continuous algebra.
\end{proposition}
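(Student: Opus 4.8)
The plan is to verify that the three operations $\oplus_I$, $\neg_I$, and $\kappa_I$ respect the equivalence relation $\equiv_I$, and then to check that the eight continuous-algebra axioms \ref{C1}--\ref{C8} hold in $\A/I$. For well-definedness I would rely on the fact that $(A,\oplus,\neg,\id)$ is an MV-algebra and $I$ is an MV-ideal, so the well-definedness of $\oplus_I$ and $\neg_I$ is inherited verbatim from MV-algebra quotient theory and requires no new work. The genuinely new content is the operation $\kappa_I$, so the bulk of the argument concentrates there.

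First I would prove that $\kappa_I$ is well-defined: assume $x \equiv_I y$, i.e. $\neg(\neg x \oplus y)\oplus\neg(\neg y\oplus x)\in I$, and show $\kappa x \equiv_I \kappa y$, i.e. $\neg(\neg\kappa x\oplus\kappa y)\oplus\neg(\neg\kappa y\oplus\kappa x)\in I$. The key tool is axiom \ref{C8}, which, reading $(\neg\kappa u\oplus\kappa v)\oplus\neg(\neg u\oplus v)=\nid$, forces $\neg(\neg u\oplus v)\le \neg\kappa u\oplus\kappa v$ via the order \ref{<}; taking complements this gives $\neg(\neg\kappa u\oplus\kappa v)\le\neg\neg(\neg u\oplus v)=\neg u\oplus v$, whence by monotonicity and a further application of \ref{C8} one bounds $\neg(\neg\kappa u\oplus\kappa v)$ above by a term built from $\neg(\neg u\oplus v)$. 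Instantiating at $(u,v)=(x,y)$ and $(u,v)=(y,x)$ and adding, the relevant distance for $\kappa x,\kappa y$ is dominated by the distance for $x,y$, which lies in $I$; since $I$ is downward closed under $\le$, it follows that $\kappa x\equiv_I\kappa y$. This monotonicity-and-ideal-closure step is where I expect the main obstacle: one must manipulate \ref{C8} to extract a clean inequality relating the $\kappa$-distance to the original distance, and getting the complements and the two symmetric instances to combine correctly is the delicate part.

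Once well-definedness is secured, verifying the axioms is routine. Axioms \ref{C1}--\ref{C6} hold in $\A/I$ because they hold in $\A$ and the quotient map $x\mapsto[x]_I$ is a surjective MV-homomorphism, so each identity transports directly: for instance, $[x]_I\oplus_I([y]_I\oplus_I[z]_I)=[x\oplus(y\oplus z)]_I=[(x\oplus y)\oplus z]_I=([x]_I\oplus_I[y]_I)\oplus_I[z]_I$, and similarly for the rest. For \ref{C7} I would compute $\kappa_I[x]_I=[\kappa x]_I=[x\oplus\neg\kappa x]_I=[x]_I\oplus_I\neg_I\kappa_I[x]_I$, using \ref{C7} in $\A$. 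For \ref{C8} the same pattern applies: $(\neg_I\kappa_I[x]_I\oplus_I\kappa_I[y]_I)\oplus_I\neg_I(\neg_I[x]_I\oplus_I[y]_I)=[(\neg\kappa x\oplus\kappa y)\oplus\neg(\neg x\oplus y)]_I=[\nid]_I=\neg_I[\id]_I$ by \ref{C8} in $\A$.

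In summary, the structure of the argument is: (i) inherit well-definedness of $\oplus_I,\neg_I$ from MV-theory; (ii) establish well-definedness of $\kappa_I$ via axiom \ref{C8}, exploiting that $I$ is downward-closed, which is the crux; (iii) push each axiom \ref{C1}--\ref{C8} through the quotient map using the corresponding axiom in $\A$. The only step demanding real care is (ii), and the rest follows by direct computation representing classes through arbitrary representatives.
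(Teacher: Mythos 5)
Your overall strategy coincides with the paper's: inherit well-definedness of $\oplus_I$ and $\neg_I$ from the MV-quotient construction (the paper actually re-proves the $\oplus_I$ case by a direct computation, but that is only presentational), handle $\kappa_I$ via axiom \ref{C8} together with downward closure of $I$, and transport axioms \ref{C1}--\ref{C8} through the quotient map by choosing representatives. However, the one concrete computation you give for the step you yourself identify as the crux is wrong. With the order of \ref{<}, namely $a\le b\iff\neg a\oplus b=\nid$, the identity $(\neg\kappa u\oplus\kappa v)\oplus\neg(\neg u\oplus v)=\nid$ reads, upon taking $a=\neg(\neg\kappa u\oplus\kappa v)$ and $b=\neg(\neg u\oplus v)$, as
\[
\neg(\neg\kappa u\oplus\kappa v)\;\le\;\neg(\neg u\oplus v),
\]
which is already exactly the inequality you need. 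It does \emph{not} yield $\neg(\neg u\oplus v)\le\neg\kappa u\oplus\kappa v$: that would require $(\neg u\oplus v)\oplus(\neg\kappa u\oplus\kappa v)=\nid$, a different statement from \ref{C8}. As a result, your subsequent complementation step produces $\neg(\neg\kappa u\oplus\kappa v)\le\neg u\oplus v$, whose right-hand side is in general near $\nid$ and is in no way controlled by $I$, so no conclusion about ideal membership can be drawn from it; the appeal to ``monotonicity and a further application of \ref{C8}'' is left unspecified and does not repair this.

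The repair is immediate and is what the paper does: read \ref{C8} as above at $(u,v)=(x,y)$ and at $(u,v)=(y,x)$, add the two inequalities (using monotonicity of $\oplus$) to obtain
\[
\neg(\neg\kappa x\oplus\kappa y)\oplus\neg(\neg\kappa y\oplus\kappa x)\;\le\;\neg(\neg x\oplus y)\oplus\neg(\neg y\oplus x)\in I,
\]
and conclude $\kappa x\equiv_I\kappa y$ by downward closure of $I$. No auxiliary lemma or second use of \ref{C8} is needed. The remainder of your argument --- well-definedness of $\oplus_I$ and $\neg_I$, and pushing each of \ref{C1}--\ref{C8} through the quotient --- is correct and matches the paper.
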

\begin{proof}
    Let $[x]_I=[x']_I$ and $[y]_I=[y']_I$.
    Then, $\neg(\neg x\oplus x')\oplus\neg(\neg x'\oplus x), \neg(\neg y\oplus y')\oplus\neg(\neg y'\oplus y)\in I$ and
    \begin{align*}
         (\neg(x\oplus y)\oplus (x'\oplus y'))\oplus\neg(\neg x\oplus x')\oplus \neg(\neg y\oplus y')&=\neg(x\oplus y)\oplus x'\oplus\neg(\neg x\oplus x')\oplus y'\oplus \neg(\neg y\oplus y')&[\text{By axiom }\ref{C1}]\\
        &=\neg(x\oplus y)\oplus x\oplus\neg(\neg x'\oplus x)\oplus y\oplus\neg(\neg y'\oplus y)&[\text{By axiom }\ref{C6}]\\
        &=\neg(x\oplus y)\oplus (x\oplus y)\oplus\neg(\neg x'\oplus x)\oplus \neg(\neg y'\oplus y)&[\text{By axiom }\ref{C1}]\\
        &=\nid&[\text{By Property }\ref{p1}]
         \end{align*}
        Hence by \ref{<}, $\neg(\neg(x\oplus y)\oplus (x'\oplus y'))\le \neg(\neg x\oplus x')\oplus \neg(\neg y\oplus y')$ and $\neg(\neg(x'\oplus y')\oplus (x\oplus y))\le \neg(\neg x'\oplus x)\oplus \neg(\neg y'\oplus y)$. Therefore
        \begin{align*}
        &\neg(\neg(x\oplus y)\oplus (x'\oplus y'))\oplus \neg(\neg(x'\oplus y')\oplus (x\oplus y))\le \neg(\neg x\oplus x')\oplus \neg(\neg y\oplus y'))\oplus \neg(\neg x'\oplus x)\oplus \neg(\neg y'\oplus y)\\
        \implies& \neg(\neg(x\oplus y)\oplus (x'\oplus y'))\oplus \neg(\neg(x'\oplus y')\oplus (x\oplus y))\in I\\
        \implies& [x\oplus y]_I=[x'\oplus y']_I.
    \end{align*}
    Well-definedness for $\neg_I$ is due to symmetry of $\equiv_I$.
    Now, if $[x]_I=[x']_I$ then
    \begin{align*}
        &(\neg\kappa x\oplus \kappa x')\oplus \neg(\neg x\oplus x')=\nid\ \text{and}\
        (\neg\kappa x'\oplus \kappa x)\oplus \neg(\neg x'\oplus x)=\nid&[\text{By axiom }\ref{C8}]\\
       \implies& \neg(\neg\kappa x\oplus \kappa x')\le \neg(\neg x\oplus x')\ \text{and}\ \neg(\neg \kappa x'\oplus \kappa x)\le\neg(\neg x'\oplus x)&[\text{By equation}\ref{<}]\\
        \implies& \neg(\neg\kappa x\oplus \kappa x')\oplus \neg(\neg\kappa x'\oplus \kappa x)\le\neg(\neg x\oplus x')\oplus\neg(\neg x'\oplus x)\in I\\
        \implies& \neg(\neg\kappa x\oplus \kappa x')\oplus\neg(\neg\kappa x'\oplus \kappa x)\in I\\
        \implies& [\kappa
        x]_I=[\kappa x']_I.
    \end{align*}
    So, $\kappa_I$ is well-defined.
    Since $(A,\oplus,\neg,\id,\kappa)$ is a continuous algebra, it follows that $(A/I,\oplus_I,\neg_I,[\id],\kappa_I)$ also follows axioms \ref{C1}-\ref{C8} and hence, is a continuous algebra.
\end{proof}

\subsection{Subdirect Product}
Let $I$ be a non-empty (Index) set.
\begin{definition}
    Let $\set{\A_i}_{i\in I}$ be a family of continuous algebras. Then, product of $\set{\A_i}_{i\in I}$ is defined by $\prod_{i\in I}\A_i:=(\prod_{i\in I}A_i,\oplus,\neg,\textbf{0},\kappa)$, where 
\begin{enumerate}
    \item $\prod_{i\in I}A_i:=\set{f:I\to\bigcup_{i\in I}A_i| f(i)\in A_i \fall i\in I}$
    \item $(f\oplus g)(i):= f(i)\oplus_ig(i)$
    \item $(\neg f)(i):= \neg_i f(i)$
    \item $\textbf{0}(i):=\id_i$
    \item $(\kappa f)(i):=\kappa_i f(i)$.
\end{enumerate}
\end{definition}
\begin{proposition}
    The operations on $\prod_{i\in I}A_i$ as defined above make $\prod_{i\in I}\A_i$ a continuous algebra.
\end{proposition}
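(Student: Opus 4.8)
The plan is to verify that the eight defining identities \ref{C1}--\ref{C8} hold for the pointwise operations on $\prod_{i\in I}A_i$ by reducing each one to the corresponding identity in the factors $\A_i$. The crucial feature that makes this work is that a continuous algebra is axiomatized entirely by universally quantified equations between terms built from $\oplus$, $\neg$, $\kappa$ and the constant $\id$; since any class defined by a set of identities is closed under products, the result is expected on general grounds. Following the paper's convention of giving direct arguments, I would carry out the coordinatewise check explicitly.

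First I would record the bookkeeping for the constants and derived terms. By definition $\textbf{0}(i)=\id_i$ for every $i$, and consequently $(\neg\textbf{0})(i)=\neg_i\id_i=\nid_i$, so the top element $\nid$ of the product is precisely the function $i\mapsto\nid_i$. More generally, an easy induction on the structure of a term $t(x_1,\dots,x_n)$ in the language $\set{\oplus,\neg,\id,\kappa}$ shows that for any $f_1,\dots,f_n\in\prod_{i\in I}A_i$ and any index $i\in I$,
\[
\big(t^{\prod}(f_1,\dots,f_n)\big)(i)=t^{\A_i}(f_1(i),\dots,f_n(i)),
\]
because each of the four operations is defined coordinatewise. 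This identity is the only tool the argument needs.

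To verify an axiom among \ref{C1}--\ref{C8}, I would then fix arbitrary $f,g,h\in\prod_{i\in I}A_i$ and an arbitrary index $i\in I$. Reading the left- and right-hand sides of the axiom as terms and applying the displayed evaluation formula, both sides evaluated at $i$ become exactly the instance of that axiom in $\A_i$ applied to the coordinates $f(i),g(i),h(i)$. Since $\A_i$ is a continuous algebra, these two values in $A_i$ agree; as $i\in I$ was arbitrary, the two sides coincide as elements of $\prod_{i\in I}A_i$. Applying this uniformly to \ref{C1}--\ref{C6} recovers the MV-algebra axioms coordinatewise, while \ref{C7} and \ref{C8}---the two axioms genuinely involving $\kappa$---follow in the same way, using that $(\kappa f)(i)=\kappa_i f(i)$.

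I do not expect any real obstacle here: the entire content is the observation that equational axioms transfer through coordinatewise operations, and the only point requiring a little care is the bookkeeping of the constant $\textbf{0}$ and the derived term $\nid$ (so that, for example, the right-hand side $\neg\id$ of \ref{C8} is interpreted as the function $i\mapsto\nid_i$ rather than treated as a single global constant). Once the evaluation formula above is in place, each of the eight axioms is discharged in a single line, and the proposition follows.
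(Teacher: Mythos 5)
Your proposal is correct and follows essentially the same route as the paper, which verifies each of the axioms \ref{C1}--\ref{C8} by fixing $f,g,h$ and an index $j\in I$ and reducing the identity to its instance in $\A_j$ via the coordinatewise definitions. The only difference is presentational: you package the reduction as a general term-evaluation formula (the standard fact that equational classes are closed under products), whereas the paper writes out each of the eight coordinatewise computations explicitly.
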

\begin{proof}
    Let $f,g,h\in \prod_{i\in I}A_i$. Then for each $j\in I$, we have the following:
    \begin{enumerate}
        \item $((f\oplus g)\oplus h)(j)=(f\oplus g)(j)\oplus_j h(j)=(f(j)\oplus_j g(j))\oplus_j h(j)=f(j)\oplus_j(g(j)\oplus_j h(j))=f(j)\oplus_j(g\oplus h)(j)=(f\oplus(g\oplus h))(j)$.
        Hence $(f\oplus g)\oplus h=f\oplus(g\oplus h).$
        \item $(f\oplus g)(j)=f(j)\oplus_jg(j)=g(j)\oplus_jf(j)=(g\oplus f)(j)$.
        Hence $f\oplus g=g\oplus f.$
        \item $(f\oplus \mathbf{0})(j)=f(j)\oplus_j\mathbf{0}(j)=f(j)\oplus \id_j=f(j).$
        Hence $f\oplus \mathbf{0}=f.$
        \item $(\neg\neg f)(j)=\neg_j(\neg f)(j)=\neg_j\neg_j f(j)=f(j).$ Hence $\neg\neg f=f.$
        \item $ (f\oplus \neg\mathbf{0})(j)=f(j)\oplus_j\neg\mathbf{0}(j)=f(j)\oplus_j\neg_j\mathbf{0}(j)=f(j)\oplus_j\neg_j\id_j=\neg_j\id_j=\neg\mathbf{0}(j).$
        Hence $f\oplus\neg\mathbf{0}=\neg\mathbf{0}.$
        \item $(\neg(\neg f\oplus g)\oplus g)(j)=\neg(\neg f\oplus g)(j)\oplus_j g(j)=\neg_j(\neg f\oplus g)(j)\oplus_jg(j)=\neg_j(\neg f(j)\oplus_jg(j))\oplus_j g(j)=\neg_j(\neg_j f(j)\oplus_j g(j))\oplus_j g(j)=\neg_j(\neg_j g(j)\oplus_j f(j))\oplus_j f(j)=\neg_j(\neg g(j)\oplus_j f(j))\oplus_j f(j)=\neg_j(\neg g\oplus f)(j)\oplus_j f(j)=\neg(\neg g\oplus f)(j)\oplus_j f(j)=(\neg(\neg g\oplus f)\oplus f)(j).$
        Hence $\neg(\neg f\oplus g)\oplus g=\neg(\neg g\oplus f)\oplus f.$
        \item $(\kappa f)(j)=\kappa_j f(j)=f(j)\oplus_j\neg_j \kappa_j f(j)=f(j)\oplus_j\neg_j(\kappa f)(j)=f(j)\oplus_j(\neg\kappa f)(j)=(f\oplus\neg\kappa f)(j).$ Hence $\kappa f=f\oplus\neg\kappa f.$
        \item $((\neg \kappa f\oplus\kappa g)\oplus\neg (\neg f\oplus g))(j)=(\neg \kappa f\oplus\kappa g)(j)\oplus_j\neg (\neg f\oplus g)(j)=(\neg \kappa f(j)\oplus_j\kappa g(j))\oplus_j\neg_j(\neg f\oplus g)(j)=(\neg_j \kappa f(j)\oplus_j\kappa_j g(j))\oplus_j\neg_j(\neg f(j)\oplus_j g(j))=(\neg_j \kappa_j f(j)\oplus_j\kappa_j g(j))\oplus_j\neg_j(\neg_j f(j)\oplus_j g(j))=\neg_j\id_j=\neg\mathbf{0}(j).$ Hence $(\neg\kappa f\oplus \kappa g)\oplus\neg(\neg f\oplus g)=\neg\mathbf{0}.$
    \end{enumerate}
    Thus $\prod_{i\in I}\A_i$ satisfies axioms \ref{C1}-\ref{C8} and hence is a continuous algebra.
\end{proof}
\begin{definition}(Projection Map)
    Let $\set{A_i}_{i\in I}$ be a family of continuous algebras. Then, the projection map from $\prod_{i\in I}\A_i$ to $\A_j$ is defined by $\pi_j:\prod_{i\in I}A_i\to A_j$ such that $\pi_j(f)=f(j)$.
\end{definition}
\begin{proposition}
    $\pi_j$ is a continuous homomorphism for each $j\in I$.
\end{proposition}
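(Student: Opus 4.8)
The plan is to verify directly that $\pi_j$ satisfies the four defining conditions of a continuous homomorphism, namely preservation of $\id$, $\oplus$, $\neg$, and $\kappa$. Since every operation on $\prod_{i\in I}\A_i$ was defined coordinatewise, each condition unwinds simply by evaluating at the index $j$ and invoking the relevant clause from the definition of the product algebra.

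Concretely, I would first observe that $\pi_j(\mathbf{0}) = \mathbf{0}(j) = \id_j$, which is exactly the base-point condition. For the binary operation, I would compute $\pi_j(f\oplus g) = (f\oplus g)(j) = f(j)\oplus_j g(j) = \pi_j(f)\oplus_j \pi_j(g)$, using the clause $(f\oplus g)(i):= f(i)\oplus_i g(i)$ from the product definition. The negation case is analogous: $\pi_j(\neg f)=(\neg f)(j)=\neg_j f(j) = \neg_j \pi_j(f)$. Finally, the only genuinely new feature relative to the MV-algebra setting is the halving operator, but this too is defined componentwise, so $\pi_j(\kappa f) = (\kappa f)(j) = \kappa_j f(j) = \kappa_j \pi_j(f)$ by the clause $(\kappa f)(i):=\kappa_i f(i)$.

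I do not anticipate any genuine obstacle: the result is the continuous-algebra analogue of the standard universal-algebra fact that canonical projections out of a direct product are homomorphisms, and the coordinatewise definition of the operations reduces every step to a one-line unwinding of definitions. The only point requiring any care is notational, namely keeping track of the distinction between the product operations $\oplus$, $\neg$, $\kappa$ on $\prod_{i\in I}A_i$ and the operations $\oplus_j$, $\neg_j$, $\kappa_j$ of the target algebra $\A_j$, rather than anything mathematically substantive.
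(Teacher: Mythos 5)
Your proposal is correct and follows exactly the same route as the paper's proof: verify each of the four homomorphism conditions by evaluating at the coordinate $j$ and unwinding the coordinatewise definitions of $\oplus$, $\neg$, $\kappa$, and $\mathbf{0}$ on the product. The computations you give are line-for-line the ones in the paper.
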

\begin{proof}
    Let $j\in I$, then $\pi_j(\mathbf{0})=\mathbf{0}(j)=\id_j$.
    If $f,g\in \prod_{i\in I}A_i$, then 
    \begin{itemize}
        \item $ \pi_j(f\oplus g)=(f\oplus g)(j)=f(j)\oplus_j g(j)=\pi_j(f)\oplus_j\pi_j(g).$
        \item $\pi_j(\neg f)=(\neg f)(j)=\neg_jf(j)=\neg_j\pi_j(f).$
        \item $\pi_j(\kappa f)=(\kappa f)(j)=\kappa_jf(j)=\kappa_j\pi_j(f).$
    \end{itemize}
\end{proof}
We are now in a position to define subdirect products of continuous algebras.
\begin{definition}[Subdirect Product]
    A continuous algebra $\A$ is said to be a subdirect product of a family of continuous algebras $\set{\A_i}_{i\in I}$ if and only if there exists an injective continuous homomorphism $h:A\to\prod_{i\in I}A_i$ such that $\fall j\in I$, $\pi_j\circ h$ is a surjective homomorphism.
\end{definition}
\begin{proposition}
    A continuous algebra $\A$ is a subdirect product of a family of continuous algebras $\set{\A_i}_{i\in I}$ if and only if there exists a family of continuous ideals of $\A$, $\set{J_{i}}_{i\in I}$, such that
    \begin{enumerate}
        \item $\A_i\cong \A/J_i\ \ \fall i\in I$.
        \item $\bigcap_{i\in I} J_i=\set{\id}$.
    \end{enumerate}
\end{proposition}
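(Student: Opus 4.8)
The plan is to use the kernels of the composite maps $\pi_i\circ h$ to produce the ideals in one direction, and to reconstruct the subdirect embedding from the quotient maps in the other, so that the two conditions translate precisely into injectivity of the embedding and coordinatewise surjectivity.

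For the forward implication, assume $\A$ is a subdirect product of $\set{\A_i}_{i\in I}$ via an injective continuous homomorphism $h\colon A\to\prod_{i\in I}A_i$ with each $\pi_i\circ h$ surjective. I would set $J_i:=\ker(\pi_i\circ h)$. By Proposition \ref{p3.3} each $\pi_i\circ h$ is a continuous homomorphism, so each $J_i$ is a continuous ideal, and surjectivity of $\pi_i\circ h$ together with the first isomorphism theorem for continuous algebras gives $\A/J_i\cong\A_i$, which is condition (1). For condition (2) I would observe that $x\in\bigcap_{i\in I}J_i$ holds exactly when $h(x)(i)=(\pi_i\circ h)(x)=\id_i$ for every $i$, i.e. when $h(x)=\mathbf{0}$, so that $\bigcap_{i\in I}J_i=\ker h$; since $h$ is injective, Proposition \ref{l3.1} yields $\ker h=\set{\id}$.

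For the converse, given continuous ideals $\set{J_i}_{i\in I}$ satisfying (1) and (2), let $\phi_i\colon\A/J_i\to\A_i$ be the isomorphisms from (1) and let $q_i\colon A\to A/J_i$, $q_i(x)=[x]_{J_i}$, be the canonical quotient homomorphisms, which are surjective continuous homomorphisms. I would define $h\colon A\to\prod_{i\in I}A_i$ coordinatewise by $h(x)(i):=\phi_i(q_i(x))$. Because each $\phi_i\circ q_i$ is a continuous homomorphism and the product operations act coordinatewise, $h$ is a continuous homomorphism. Surjectivity of $\pi_j\circ h=\phi_j\circ q_j$ is immediate since $q_j$ is onto and $\phi_j$ is bijective, and injectivity follows because $x\in\ker h$ iff $[x]_{J_i}=[\id]_{J_i}$ for all $i$ (each $\phi_i$ being injective and fixing $[\id]$), i.e. iff $x\in\bigcap_{i\in I}J_i=\set{\id}$; Proposition \ref{l3.1} then gives injectivity of $h$, completing both directions.

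The routine part is the coordinatewise verification that $h$ is a homomorphism, which mirrors the proof that the projections are homomorphisms. The step I expect to carry the real weight is the first isomorphism theorem $\A/J_i\cong\A_i$ invoked (and re-used) above: although the MV-reduct case is classical, one must check that the induced bijection additionally commutes with $\kappa$, i.e. that $\kappa_{J_i}$ on the quotient corresponds under $\phi_i$ to the $\kappa$ on $\A_i$. This reduces to the well-definedness of $\kappa_{J_i}$ already established for quotient continuous algebras, so the obstacle is mostly careful bookkeeping around the isomorphisms $\phi_i$ rather than a genuinely new difficulty.
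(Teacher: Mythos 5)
Your proposal is correct and follows essentially the same route as the paper: kernels $J_i=\ker(\pi_i\circ h)$ in the forward direction, the coordinatewise map $h(x)(i)=\phi_i([x]_{J_i})$ in the converse, and the identification $\bigcap_i J_i=\ker h$ in both. The only difference is that you cite the first isomorphism theorem where the paper constructs the isomorphism $\Phi([x]_{J_i})=\pi_i\circ h(x)$ explicitly and verifies its well-definedness and injectivity by hand, which is exactly the bookkeeping you correctly identify as the step carrying the weight.
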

\begin{proof}
    First, assume that $\A$ is a subdirect product of a family of continuous algebras $\set{\A_i}_{i\in I}$. Then there exists an injective continuous homomorphism $h:A\to\prod_{i\in I}A_i$ such that $\fall j\in I$ the projection $\pi_j\circ h$ is surjective.
    For each $j\in I$ define $J_j:=ker(\pi_j\circ h)$.
    Now define a map $\Phi:A/J_j\to A_j$ given by $$\Phi([x]_{J_j})=\pi_j\circ h(x).$$
    We first show that $\Phi$ is well-defined.
    Suppose $[x]_j=[y]_j$. Then we have,
    \begin{align*}
        &\neg(\neg x\oplus y)\oplus \neg(\neg y\oplus x)\in J_j (=ker(\pi_j\circ h))\\
        \implies& \pi_j\circ h(\neg(\neg x\oplus y)\oplus \neg(\neg y\oplus x))=\id_j\\
        \implies& \neg_j(\neg_j \pi_j\circ h(x)\oplus_j \pi_j\circ h(y))\oplus_j \neg_j(\neg_j \pi_j\circ h(y)\oplus_j \pi_j\circ h(x))=\id_j\\
        \implies& \neg_j(\neg_j \pi_j\circ h(x)\oplus_j \pi_j\circ h(y))=\id_j\ \text{and} \neg_j(\neg_j \pi_j\circ h(y)\oplus_j \pi_j\circ h(x))=\id_j &[\text{By Property }\ref{p2}]\\
        \implies& \neg_j \pi_j\circ h(x)\oplus_j \pi_j\circ h(y)=\nid_j\ \text{and} \neg_j \pi_j\circ h(y)\oplus_j \pi_j\circ h(x)=\nid_j\\
        \implies& \pi_j\circ h(x)\le \pi_j\circ h(y)(\le  \pi_j\circ h(x))&[\text{By equation}\ref{<}]\\
        \implies& \pi_j\circ h(x)=\pi_j\circ h(y).
    \end{align*}
    So, $\Phi$ is well-defined.
    By construction and Proposition \ref{p3.3}, $\Phi_j$ is a surjective continuous homomorphism. To show injectivity, suppose
     $\Phi([x]_j)=\Phi([y]_j)$, then 
    \begin{align*}
        &\pi_j\circ h(x)=\pi_j\circ h(y)\\
        \implies& \neg_j \pi_j\circ h(x)\oplus_j \pi_j\circ h(y)=\nid_j\ \text{and}\ \neg_j \pi_j\circ h(y)\oplus_j\pi_j\circ h(x)=\nid_j\\
        \implies& \neg_j(\neg_j \pi_j\circ h(x)\oplus_j \pi_j\circ h(y))\oplus_j\neg_j(\neg_j \pi_j\circ h(y)\oplus_j \pi_j\circ h(x))=\id_j\\
        \implies& \pi_j\circ h(\neg(\neg x\oplus y)\oplus\neg(\neg y\oplus x))=\id_j\\
        \implies& \neg(\neg x\oplus y)\oplus\neg(\neg y\oplus x)\in J_j\\
        \implies& [x]_{J_j}=[y]_{J_j}.
    \end{align*}
    So, $\Phi$ is an isomorphism.
    Now, let $x\in \bigcap_{i\in I}J_i$. Then, consider $f=h(x)\in \prod_{i\in I}A_i$. Then, for each $i\in I$, we have 
$$ f(i)=\pi_i(f)=\pi_i\circ h(x)=\id_i.$$
    Consequently $f=\textbf{0}$. But since $h$ is injective homomorphism, $ker(h)=\set{\id}$ by Proposition \ref{l3.1}. So, $x=\set{\id}$. Hence, $\bigcap_{i\in I}J_{i}=\set{\id}$.

    Conversely, suppose that for each $i\in I$ there exists an isomorphism
$\Phi_i:\mathcal A/J_i\to \mathcal A_i$, and that $\bigcap_{i\in I}J_i=\{\id\}$. Define
$$ h:A\to \prod_{i\in I}A_i
\quad\text{by}\quad
h(x)(i)=\Phi_i([x]_{J_i}).$$
    
    Now we have to show that $h$ is a continuous homomorphism. So, let $x,y\in A$. Then, for any $j\in I$ we have the following:
    \begin{enumerate}
       \item $h(x\oplus y) (j)=\Phi_j([x \oplus y]_{J_j})=\Phi_j([x]_{J_j}\oplus_{J_j} [y]_{J_j})=\Phi_j([x]_{J_j})\oplus_j\Phi([y]_{J_j})=h(x)(j)\oplus_jh(y)(j)=(h(x)\oplus_{\prod\A_{i}}h(y))(j)$
        Hence $h(x\oplus y)=h(x)\oplus_{\prod\A_i}h(y)$
        \item $h(\neg x)(j)=\Phi_j([\neg x]_{J_j})=\Phi_j(\neg_{J_j}[x]_{J_j})=\neg_j\Phi([x]_{J_j})=\neg_jh(x)(j)=(\neg_{\prod\A_i}h(x))(j)$
        Hence $h(\neg x)=\neg_{\prod\A_i}h(x)$
        \item $h(\id)(j)=\Phi_j([\id]_{J_j})=\id_j=\mathbf{0}(j)$
        Hence $h(\id)=\mathbf{0}$
        \item $h(\kappa x)(j)=\Phi_j([\kappa x]_{J_j})=\Phi_j(\kappa_{J_j}[x]_{J_j})=\kappa_j\Phi_j([x]_{J_j})=\kappa_j h(x)(j)=(\kappa_{\prod\A_i}h(x))(j)$
        Hence $h(\kappa x)=\kappa_{\prod\A_i}h(x)$.
    \end{enumerate}

Thus, $h$ is a continuous homomorphism. Since each $\Phi_j$ is surjective, $\pi_j\circ h$
is surjective for all $j\in I$.

Finally, let $x\in\ker(h)$. Then $h(x)(i)=\id_i$ for all $i\in I$, and hence
    
    \begin{align*}
       \Phi_i([x]_{J_i})=\id_i&\implies [x]_{J_i}=[\id]_{J_i}\\
        &\implies \neg(\neg x\oplus \id)\oplus\neg(\neg\id\oplus x)\in J_i\\
        &\implies \neg\neg x\oplus \neg\nid\in J_i\\
        &\implies \neg\neg x\in J_i\\
        &\implies x\in J_i\\
        &\implies x\in \bigcap_{i\in I}J_i\\
        &\implies x=\id.
    \end{align*}
    Thus, $x\in \bigcap_{i\in I}J_i=\{\id\}$, and so $\ker(h)=\{\id\}$. By
Proposition \ref{l3.1}, $h$ is injective.

Therefore, $\mathcal A$ is a subdirect product of the family $\{\mathcal A_i\}_{i\in I}$.
\end{proof}
\begin{theorem}
    Every non-trivial continuous algebra is a subdirect product of continuous chains.
\end{theorem}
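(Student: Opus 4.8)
The plan is to mirror the classical MV-algebra subdirect representation theorem, which says every non-trivial MV-algebra embeds into a product of totally ordered MV-algebras (MV-chains), and to upgrade it so that the factors are continuous chains and the embedding respects $\kappa$. The key is the proposition just proved: $\mathscr{A}$ is a subdirect product of $\{\mathscr{A}_i\}$ iff there is a family of continuous ideals $\{J_i\}$ with $\mathscr{A}/J_i$ a continuous chain for each $i$ and $\bigcap_i J_i = \{\id\}$. So the whole theorem reduces to producing enough prime continuous ideals to separate points, and checking that each quotient by a prime is totally ordered.

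First I would recall that in MV-algebra theory, the prime ideals are exactly the ideals whose quotient is an MV-chain, and that the intersection of all prime ideals of any MV-algebra is $\{\id\}$ (the radical-type separation property); equivalently, for every $x \neq \id$ there is a prime ideal omitting $x$, obtained by a Zorn's-lemma argument extending the ideal generated by nothing to a maximal ideal not containing $x$. The crucial observation here is that a prime \emph{continuous} ideal is by definition just a prime MV-ideal of the underlying MV-algebra $(A,\oplus,\neg,\id)$ — the operator $\kappa$ imposes no extra condition on ideals. Therefore the supply of prime ideals, and the separation property $\bigcap\{I : I \text{ prime}\} = \{\id\}$, is inherited verbatim from the MV-structure with no new work.

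Next I would set $\{J_i\}_{i \in I}$ to be the family of all prime continuous ideals of $\mathscr{A}$, so that $\bigcap_i J_i = \{\id\}$ holds by the MV separation property, giving condition (2). For condition (1), I must check that $\mathscr{A}/J_i$ is a continuous chain: since $J_i$ is a prime MV-ideal, the quotient MV-algebra is totally ordered, and the quotient construction for continuous algebras proved earlier endows $\mathscr{A}/J_i$ with a well-defined $\kappa_{J_i}$ making it a continuous algebra; its order coincides with the MV-order on the quotient, which is total. Hence each $\mathscr{A}/J_i$ is a continuous chain. Invoking the preceding proposition with this family then yields that $\mathscr{A}$ is a subdirect product of the continuous chains $\{\mathscr{A}/J_i\}$.

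The main obstacle, and the only point requiring genuine care, is verifying that the MV-theoretic separation by primes is legitimately available in the continuous setting — i.e. that the prime ideals used are continuous ideals and that the quotients carry the halving operator. Both are handled by the remark that continuous ideals and prime continuous ideals are \emph{defined} to be MV-ideals and prime MV-ideals respectively, so no closure condition under $\kappa$ is required; the quotient proposition then automatically supplies $\kappa_{J_i}$. I would therefore state the proof by first citing the MV subdirect representation theorem to obtain the separating family of prime MV-ideals, reinterpreting them as prime continuous ideals, confirming each quotient is a continuous chain via the quotient construction, and finally applying the subdirect-product characterization proposition to conclude.
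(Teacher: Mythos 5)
Your proposal is correct and follows essentially the same route as the paper: both reduce the theorem to the subdirect-product characterization via ideals, obtain the separating family of prime ideals from the underlying MV-algebra (since prime continuous ideals are by definition just prime MV-ideals), and note that each quotient is a continuous chain because the quotient construction supplies $\kappa$ and the order is the total MV-order. Your explicit justification of why the MV-theoretic separation transfers verbatim is a slightly more detailed rendering of what the paper cites from the literature, but the argument is the same.
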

\begin{proof}
    Let $\A$ be a non-trivial continuous algebra.
    Then, for each $a\in A$ such that $a\neq \id$, there exists a prime ideal $P$ of $\A$ such that $a\notin P$, by \cite{CM}. So, there exists a family of prime ideals of $A$, $\set{P_i}_{i\in I}$ such that $\bigcap_{i\in I}P_i=\set{\id}$. Also, $\A/P$ is continuous chain if $P$ is a prime ideal of $\A$, by \cite{CM}. So, $\A/P_i$ is continuous chain for each $P_i$. Therefore, by previous proposition, $\A$ is a subdirect product of a family of continuous chains $\{\A/P_i\}_{i\in I}$.
\end{proof}
\section{$2$-divisible $\l u$-groups}
We now consider a subclass of $\ell u$-groups that is categorically equivalent to the class of MV-algebras, providing a framework for studying continuous algebras.

\begin{definition}[$\l u$-group\cite{CDM}]
    An $\l u$-group is a tuple $(G,+,-, id, \vee, \wedge, u)$, where:
\begin{enumerate}
    \item $(G, +, -, id)$ is an abelian group.
    \item $(G, \vee, \wedge)$ is a lattice.
    \item For all $x, y, z \in G$:
    \[
    (x \vee y) + z = (x + z) \vee (y + z).
    \]
    \item $u$ is a strong unit, that is, for all $x \in G$, there exists a natural number $n$ such that:

    \[
    |x| \leq \underbrace{u+ u+ \dots + u}_{\text{n times}}.
    \]
\end{enumerate}
\end{definition}
\begin{definition}
    A group $(G,+)$ is said to be $2$-divisible if for each $g\in G$ there exists an $h\in G$ such that $g=h+h$.
\end{definition}
%We will now define the notion of an 
%$\l u^*$ -group in a manner that establishes its connection to continuous algebra as desired. 

Some examples of the $2$-divisible $\l u$-groups are:
\begin{example}
    \begin{enumerate}
        \item $(\mathds{R},+,-,0,\max,\min,1)$.
        \item $(\mathds{Q},+,-,0,\max,\min,1)$.
        \item $(\mathds{R}^+, \times, ^{-1}, 1, \max,\min,2)$.
    \end{enumerate}
\end{example}
On the other hand, some examples of $\l u$-groups which are not $2$-divisible $\l u$-groups are:
{\begin{example}
    \begin{enumerate}
        \item $(\mathds{Q}_{>0},\times,^{-1},1,\max,\min,2)$ since $x^2=2$ implies $x\notin \mathds{Q}$.
         \item $(\mathds{Z}/2\mathds{Z},+,-,0,\max,\min,1)$ $x+x=1$ implies $ x\notin \mathds{Z}/2\mathds{Z}$.
    \end{enumerate}
\end{example}
\begin{proposition}\label{propositionbirkoff}
    Let $G$ be an $\l u$-group. If $a+a\ge b+b$ for some $a, b\in G$, then $a\ge b$.
\end{proposition}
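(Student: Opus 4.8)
The plan is to reduce the claim to the totally ordered case, where it becomes almost immediate, via Birkhoff's subdirect representation theorem for lattice-ordered groups (which is surely the reason for the label of this proposition). Every $\ell u$-group is, by the first axiom, an abelian $\ell$-group, and every abelian $\ell$-group embeds as an $\ell$-subgroup into a product $\prod_k G_k$ of totally ordered abelian groups via an \emph{order}-embedding: an inequality $x\ge y$ holds in $G$ if and only if it holds coordinatewise in every factor $G_k$. Writing $a_k,b_k$ for the images of $a,b$ in $G_k$, the hypothesis $a+a\ge b+b$ transfers to $a_k+a_k\ge b_k+b_k$ in each $G_k$, so it suffices to prove $a_k\ge b_k$ for every $k$.

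The second step is the totally ordered case, where the implication $2x\ge 2y\Rightarrow x\ge y$ is elementary. Suppose $a_k<b_k$ in some factor. Adding $a_k$ to both sides gives $a_k+a_k<a_k+b_k$, and adding $b_k$ to $a_k<b_k$ gives $a_k+b_k<b_k+b_k$; by transitivity $a_k+a_k<b_k+b_k$, contradicting the transferred hypothesis. Since the order on $G_k$ is total, $a_k\ge b_k$ follows, and as this holds in every factor we conclude $a\ge b$ in $G$.

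To keep the argument self-contained and avoid invoking Birkhoff, I would instead argue directly with positive and negative parts. Put $c:=a-b$; since $G$ is abelian and the order is translation-invariant, $a+a\ge b+b$ is equivalent to $c+c\ge id$. Decompose $c=c^{+}-c^{-}$ with $c^{+}:=c\vee id$ and $c^{-}:=(-c)\vee id$, both $\ge id$ and disjoint, i.e. $c^{+}\wedge c^{-}=id$. Using that disjointness is preserved under positive multiples, $2c^{+}\wedge 2c^{-}=id$; from $2c=2c^{+}-2c^{-}\ge id$ we obtain $2c^{+}\ge 2c^{-}$, whence $2c^{+}\wedge 2c^{-}=2c^{-}$, forcing $2c^{-}=id$. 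As $\ell$-groups are torsion-free (for $x\ge id$ one has $nx\ge x\ge id$, so $nx=id$ gives $x=id$), we get $c^{-}=id$, hence $c=c^{+}\ge id$, that is $a\ge b$.

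The only genuine obstacle is locating the fact $2x\ge 2y\Rightarrow x\ge y$ at the right level of generality without circularity: it fails in ordered monoids and truly uses both torsion-freeness and the lattice structure. Both routes neutralize this — Birkhoff by pushing everything to chains, where totality makes it trivial, and the direct route by isolating it in the disjointness and torsion-free lemmas. I expect the step most in need of care to be the componentwise transfer, namely verifying that the subdirect representation is an order-embedding so that $a\ge b$ can be checked factor by factor, though this is entirely standard.
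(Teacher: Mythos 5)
Your argument is correct. Note first that the paper does not actually prove this proposition: it only remarks that the statement is a special case of Proposition 3 in Chapter 14 of Birkhoff's \emph{Lattice Theory} (the classical fact that $na\ge nb$ implies $a\ge b$ in any $\ell$-group), so any honest proof you supply is necessarily ``different'' from the paper's. Of your two routes, the direct one is the better fit here and is sound: with $c:=a-b$ the hypothesis becomes $2c\ge id$, the decomposition $c=c^{+}-c^{-}$ with $c^{+}\wedge c^{-}=id$ is standard, and the chain $2c^{+}\wedge 2c^{-}=id$, $2c^{+}\ge 2c^{-}$, hence $2c^{-}=id$, hence $c^{-}=id$ by torsion-freeness, is exactly the textbook argument. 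The one ingredient you assert without proof is that disjointness is preserved under positive multiples; this follows from the inequality $x\wedge(y+z)\le(x\wedge y)+(x\wedge z)$ for $x,y,z\ge id$, which in the abelian case is immediate from $(x\wedge y)+(x\wedge z)=(x+x)\wedge(x+z)\wedge(y+x)\wedge(y+z)\ge x\wedge(y+z)$, so you may as well include that one line to make the argument fully self-contained. The Birkhoff subdirect-representation route is also valid but is strictly heavier machinery than the claim requires, and the worry you flag about the order-embedding is not an issue: the subdirect representation of an abelian $\ell$-group into totally ordered groups is by construction an $\ell$-embedding, so order is reflected coordinatewise. Either way the proposition holds; the paper's strong unit $u$ plays no role, as you correctly exploit.
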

In fact, this result is a special case of Proposition 3 in Chapter 14 of \cite{BG}.
\begin{proposition}\label{propdecomp}\cite{BG}
    Let $G$ be an $\l u$-group. Then, for any $g\in G$, $g=g^++g^-$ where $g^+=g\vee id$ and $g^-=g\wedge id$.
\end{proposition}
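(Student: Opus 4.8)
The plan is to reduce the claim to the general lattice-ordered group identity $a + b = (a \vee b) + (a \wedge b)$ and then specialize to $b = \id$. First I would record two auxiliary facts that follow directly from the axioms. Translation preserves order: if $x \le y$ then $x \vee y = y$, and the translation law $(x \vee y) + z = (x+z) \vee (y+z)$ gives $(x+z)\vee(y+z) = y+z$, so $x+z \le y+z$. Applying this with the translations $-x$ and then $-y$ shows $x \le y \Rightarrow -y \le -x$, i.e.\ negation is order-reversing; hence the bijection $t \mapsto -t$ interchanges suprema and infima, so in particular $-(c \vee \id) = (-c) \wedge \id$ for every $c \in G$. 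The same order-reversal converts the join translation law into its meet form $(x \wedge y) + z = (x+z)\wedge(y+z)$, which I will also use.

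With these in hand, I would prove $a + b = (a \vee b) + (a \wedge b)$. Since the group is abelian, it suffices to show $(a \vee b) + (a \wedge b) - a - b = \id$. Using the join and meet translation laws,
\[
(a \vee b) - b = (a - b) \vee \id \qquad\text{and}\qquad (a \wedge b) - a = (b - a) \wedge \id,
\]
so that, writing $c := a - b$ and using commutativity to regroup,
\[
(a \vee b) + (a \wedge b) - a - b = \bigl[c \vee \id\bigr] + \bigl[(-c) \wedge \id\bigr].
\]
By the order-reversing property of negation, $(-c) \wedge \id = -(c \vee \id)$, and the right-hand side collapses to $(c \vee \id) - (c \vee \id) = \id$. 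This establishes the identity.

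Finally I would specialize to $a = g$ and $b = \id$, which yields
\[
g^{+} + g^{-} = (g \vee \id) + (g \wedge \id) = g + \id = g,
\]
exactly as claimed.

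The step I expect to be the main obstacle is the middle one: both the passages $(a \vee b) - b = (a-b)\vee \id$ and $(a \wedge b) - a = (b-a)\wedge \id$, together with the collapse $(-c) \wedge \id = -(c \vee \id)$, rest on the auxiliary facts about order preservation and order-reversing negation rather than on the translation law in isolation. Care is therefore needed to derive these facts cleanly from the $\l u$-group axioms before running the main computation; once they are in place, everything else is a routine consequence of commutativity and translation invariance.
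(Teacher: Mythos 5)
Your proof is correct. The paper itself gives no proof of this proposition---it is simply cited from Birkhoff---but your argument is the standard derivation: establish the $\ell$-group identity $a+b=(a\vee b)+(a\wedge b)$ using the translation law, the monotonicity of translation, and the order-reversal $-(c\vee id)=(-c)\wedge id$, then specialize to $b=id$. Each of the auxiliary facts you flag is derived soundly from the stated axioms (only the abelian group, lattice, and join-translation axioms are needed; the strong unit plays no role), so the argument is complete as written.
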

\begin{theorem}\label{TFAElu*}
    Let $G$ be an $\l u$-group. Then, the following are equivalent:
    \begin{enumerate}
        \item $G$ is a $2$-divisible $\l u$-group.
        \item There exists a function $^*:[id, u]\to G$ such that $g^*+g^*=u+g$; where $[id,u]=\set{g\in G| id\le g\le u}$.
        \item The function $f:G\to G$ defined by $f(g)=g+g$ is an automorphism.
    \end{enumerate}
\end{theorem}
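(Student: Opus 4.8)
The plan is to establish the three equivalences by proving the cycle $(1)\Rightarrow(3)\Rightarrow(2)\Rightarrow(1)$. Before starting, I would record two observations that simplify everything: since $G$ is abelian, the map $f(g)=g+g$ is automatically a group endomorphism, and by Proposition \ref{propositionbirkoff} it is automatically injective, because $f(a)=f(b)$ forces $a\ge b$ and $b\ge a$. Hence for (3) the only genuine content is that $f$ is onto and that it preserves the lattice operations; note that $f$ is \emph{not} expected to fix $u$ (indeed $f(u)=u+u$), so by ``automorphism'' I mean an automorphism of the underlying $\l$-group $(G,+,-,\id,\vee,\wedge)$ rather than of the full $\l u$-group.

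For $(1)\Rightarrow(3)$, surjectivity of $f$ is precisely $2$-divisibility. To see that $f$ is a lattice homomorphism I would prove the identity $(x\vee y)+(x\vee y)=(x+x)\vee(y+y)$ valid in any $\l$-group: expanding with translation invariance gives $(x+x)\vee(x+y)\vee(y+y)$, and the middle term is absorbed because $\bigl((x+x)\vee(y+y)\bigr)-(x+y)=(x-y)\vee(y-x)=|x-y|\ge\id$. The meet case follows by applying this to $-x,-y$ together with $f(-x)=-f(x)$. Combined with injectivity, this makes $f$ an $\l$-automorphism. For $(3)\Rightarrow(2)$ I would simply set $g^*:=f^{-1}(u+g)$ for $g\in[\id,u]$, so that $g^*+g^*=u+g$ immediately.

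The main work, and the step I expect to be the genuine obstacle, is $(2)\Rightarrow(1)$: halving is assumed only on the shifted interval $[\id,u]$ and must be globalised. Here I would introduce $H:=\{a+a : a\in G\}$, the set of ``doubles,'' which is clearly a subgroup of $G$. From the hypothesis, for every $g\in[\id,u]$ the element $u+g=g^*+g^*$ lies in $H$; letting $g$ range over $[\id,u]$ shows $[u,u+u]\subseteq H$, and taking $g=\id$ gives $u=\id^*+\id^*\in H$. Subtracting $u\in H$ then yields $[\id,u]\subseteq H$. Next I would prove by induction on $n$ that $[\id,nu]\subseteq H$: given $g$ with $\id\le g\le (n+1)u$, write $g=(g\wedge nu)+\bigl((g\vee nu)-nu\bigr)$ using the standard $\l$-group identity $(g\wedge nu)+(g\vee nu)=g+nu$; the first summand lies in $[\id,nu]\subseteq H$ by the induction hypothesis, and the second in $[\id,u]\subseteq H$, so $g\in H$. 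Finally, for an arbitrary $x\in G$, Proposition \ref{propdecomp} gives $x=x^++x^-$ with $x^+\ge\id$ and $-x^-\ge\id$; the strong unit places $x^+$ and $-x^-$ inside some $[\id,nu]\subseteq H$, and since $H$ is a subgroup we conclude $x\in H$. Hence $H=G$, i.e. $G$ is $2$-divisible.

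The delicate point throughout the last direction is that $G$ is only a lattice, not a chain, so one cannot argue by cases on the order. The argument must instead exploit the identity $(g\wedge nu)+(g\vee nu)=g+nu$ and the decomposition of Proposition \ref{propdecomp} to reduce every element to a sum of pieces, each of which lies either in $[\id,u]$ or in a previously handled interval $[\id,nu]$, so that the locally defined halving encoded by $^*$ propagates through the whole group via the strong unit.
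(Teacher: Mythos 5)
Your proof is correct, but it takes a genuinely different route from the paper's. The paper proves the cycle $(1)\Rightarrow(2)\Rightarrow(3)\Rightarrow(1)$ and concentrates all the work in $(2)\Rightarrow(3)$: given $^*$ on $[id,u]$ it establishes surjectivity of $f$ by a case analysis on whether $g>id$, $g<id$, or $g\not\sim id$, combined with a descent on the least $n_g$ with $g\le n_g u$ (repeatedly splitting off $g_0^+$ and passing to $-g_0^-$, which strictly decreases $n_g$). You instead prove $(1)\Rightarrow(3)\Rightarrow(2)\Rightarrow(1)$ and relocate the work to $(2)\Rightarrow(1)$, globalising the halving via the subgroup $H=\{a+a\mid a\in G\}$, the identity $(g\wedge nu)+(g\vee nu)=g+nu$, and Proposition \ref{propdecomp}; your induction step and the final decomposition $x=x^++x^-$ are both sound, and the observation $(x\vee y)+(x\vee y)=(x+x)\vee(y+y)$ (middle term absorbed since $|x-y|\ge id$) correctly handles lattice preservation in $(1)\Rightarrow(3)$. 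Your approach buys two things: it replaces the paper's somewhat delicate multi-case descent with a short structural induction on the intervals $[id,nu]$ that never needs to argue by comparability cases, and it produces $^*$ canonically as $g\mapsto f^{-1}(u+g)$ rather than via the axiom of choice as in the paper's $(1)\Rightarrow(2)$ (the paper only observes afterwards that the choice was unique). What the paper's route buys in exchange is an explicit, constructive description of how to halve an arbitrary element in terms of the $^*$ data on $[id,u]$, which is closer in spirit to how $\Gamma(G,u,*)$ is used later.
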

\begin{proof}
\textbf{(1 $\implies$ 2)}: 
    First, assume that $G$ is a $2$-divisible $\l u$-group. Then, for each $g\in [id, u]$, consider the set $$X_g:=\set{h\in G| h+h=u+g}.$$ Since $G$ is a $2$-divisible group, $X_g$ is non-empty. So, by axiom of choice, there exists a function $$F:\set{X_g|g\in [id, u]}\to \bigcup_{g\in[id, u]} X_{g}\ \text{such that}\ F(X_g)\in X_g.$$ Then the required $^*:[id,u]\to G$ is defined by $g^*=F(X_g)$.

\textbf{(2 $\implies$ 3)}:  Let $*:[id, u]\to G$ be a function such that $g^*+g^*=u+g$. Define $f:G\to G$ by $f(g)=g+g$. We check that $f$ is an automorphism.
    \begin{enumerate}
        \item As $f(g+h)=g+h+g+h=g+g+h+h=f(g)+f(h)$, $f$ preserves group operation.
        \item As $f(-g)=-g+(-g)=-(g+g)=-f(g)$, $f$ preserves inverses.
        \item Let $g\le h$. Then, $f(g)=g+g\le g+h\le h+h=f(h)$. Conversely, if $f(g)\ge f(h)$, then $g+g\ge h+h$, which implies $g\ge h$, using Proposition \ref{propositionbirkoff}. So, $f$ preserves order.
        \item Let $f(g)= f(h)$. Then, $g+g= h+h$. So, $g+g\ge h+h$ implies $g\ge h$ and $h+h\ge g+g$ implies $h\ge g$. So, $g=h$. Therefore, $f$ is injective.
        \item For surjectivity, let $g\in G$. If $g=id$, then $g=id+id=f(id)$. Otherwise, exactly one of the following cases occurs:
        \begin{enumerate}
            \item \( g > id \),
            \item \( g < id \),
            \item \( g \) and \( id \) are incomparable, which we denote by \( g \not\sim id \).
        \end{enumerate}%case a is not correct. 
       \textbf{Case (a):} Let us assume $g> id$. As $u$ is the strong unit, there exists the smallest $n_g>0$ such that $id<g\le n_gu$.

       \textbf{Subcase 1:} When $n_g=1$, $g\in [id, u]$. So, $g^*+g^*=u+g$. It follows that $$g=g^*+g^*-u=g^*+g^*-id^*-id^*=f(g^*-id^*).$$ 
       \textbf{Subcase 2:} If $n_g>1$, $g\not\le (n_g-1)u$. Define $$g_0:=g-(n_g-1)u.$$ By minimality of $n_g$, $g_0>id$ or $g_0\not\sim id$. Moreover, since $g\le n_gu$, we obtain $g_0\le u$. 
       
       For $g_0>id$, $g_0\in [id,u]$. So, $g_0=f(h_0)$ for some $h_0\in G$ using a similar argument as in subcase 1. It follows that $$g=g_0+(n_g-1)u=f(h_0)+(n_g-1) f(id^*)=f(h_0+(n_g-1)id^*).$$

       Now, for $g_0\not\sim id$, we have $g_0=g_0^++g_0^-$ by Proposition \ref{propdecomp}. Clearly, $g_0^+>id$. Moreover, $id<u$ and $g\le n_gu$ which implies $g_0\le u$. Therefore, $id<g_0^+\le u$. Consequently, $g_0^+= f(h_1)$ for some $h'\in G$ using subcase 1. Therefore, 
       $$g=f(h) \text{ for some } h\in G \text{ if and only if }g^-_0=f(h_0)\text{ for some }h_0\in G.$$ Note that, $g_0^-=g_0\wedge id$. So, 
       $$-g_0^-=-g_0\vee -id=-g_0\vee id.$$ 
       Clearly, $-g_0^->id$. Also, 
       $$-g_0=(n_g-1)u-g<(n_g-1)u\ \text{ and }\ id<u\le (n_g-1)u$$ which gives $id<-g_0^-\le (n_g-1) u$. Let $g_1:=-g_0^-$, then, $n_{g_1}<n_g$ and $$g=f(h)\text{ for some }h\in G\text{ if and only if }g_1=f(h_1)\text{ for some }h_1\in G.$$ Repeating this construction yields a sequence $g,g_1,g_2,\dots$ with $1\le n_{g_{i+1}}<n_{g_i}$. Hence the process terminates at some $g_k$ with $n_{g_k}=1$. Then, by subcase 1, $g_k=f(h_k)$ for some $h_k\in G$. Therefore, $g=f(h)$ for some $h\in G$.

       \textbf{Case (b):} If $g < id$, then $-g > id$. If $-g = f(h)$ for some $h \in G$, then $g = -f(h) = f(-h).$
Hence, surjectivity for positive elements implies surjectivity for negative ones.

       \textbf{Case (c):} If $g \not\sim id$, decompose $g$ as
\[
g = g^{+} + g^{-},
\quad \text{where} \quad
g^{+} = g \vee id \ \text{and} \ g^{-} = g \wedge id.
\]
Then $g^{+} > id$ and $g^{-} < id$.
If both $g^{+}$ and $g^{-}$ lie in the image of $f$, say
$g^{+} = f(h_{1})$ and $g^{-} = f(h_{2})$ for some $h_{1}, h_{2} \in G$,
then
\[
g = f(h_{1}) + f(h_{2})=h_1+h_1+h_2+h_2=f(h_1+h_2).
\]
    \end{enumerate}
    So, $f$ is an automorphism.

    \textbf{(3 $\implies$ 1)}: Now, if $f$ is an automorphism, then $f$ is also surjective. So, for each $g\in G$, there exists $h\in G$ such that $g=f(h)=h+h$. So, $G$ is $2$-divisible $\l u$-group.
\end{proof}
Note that since $f$ is an automorphism, it is injective. Consequently, for each
$g \in [id,u]$, the set
\[
X_g = \{ h \in G \mid h + h = u + g \}
\]
is a singleton. Hence, the function $^{*}$ is unique. That is, if
$\rho : [id,u] \to G$ is any function satisfying
\[
\rho(g) + \rho(g) = u + g \quad \text{for all } g \in [id,u],
\]
then $\rho = {}^{*}$. 
\begin{definition}\label{d4.3}
    Let $G$ be a $2$-divisible $\l u$-group with a strong unit $u$. For each $x,y\in [id,u]$,
    $$x\oplus y:=u\wedge (x+ y),\ \neg x:=u+ (-x),\ \kappa x:=x^*$$
    The structure $([id,u],\oplus,\neg,id, \kappa)$ will be denoted by $\Gamma(G,u,*)$
\end{definition}
\begin{proposition}\label{lem*preservesorder}
        If $g,h\in G$, then $g>h$ if and only if $g^*>h^*$.
\end{proposition}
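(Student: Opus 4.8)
The plan is to reduce everything to the defining identity of the halving operation and two facts already in hand: the translation–invariance of the order in an $\ell u$-group and Proposition \ref{propositionbirkoff}. Since $^{*}$ is only defined on $[id,u]$, I read the statement for $g,h\in[id,u]$. By Theorem \ref{TFAElu*} and the uniqueness remark following it, $^{*}$ is a genuine function satisfying $g^{*}+g^{*}=u+g$ and $h^{*}+h^{*}=u+h$. I first record that $^{*}$ is injective: if $g^{*}=h^{*}$ then $u+g=g^{*}+g^{*}=h^{*}+h^{*}=u+h$, and cancelling $u$ in the abelian group gives $g=h$.

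First I would establish the non-strict equivalence $g\ge h\iff g^{*}\ge h^{*}$. For the forward direction, $g\ge h$ gives $u+g\ge u+h$ by translation invariance (axiom 3 in the definition of an $\ell u$-group together with commutativity), that is $g^{*}+g^{*}\ge h^{*}+h^{*}$; Proposition \ref{propositionbirkoff} then yields $g^{*}\ge h^{*}$. For the reverse direction, $g^{*}\ge h^{*}$ gives $g^{*}+g^{*}\ge h^{*}+h^{*}$ by applying monotonicity of $+$ twice, hence $u+g\ge u+h$, and cancelling $u$ gives $g\ge h$.

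Finally I would upgrade to strict inequalities using injectivity. If $g>h$, the non-strict equivalence gives $g^{*}\ge h^{*}$; were $g^{*}=h^{*}$, injectivity of $^{*}$ would force $g=h$, contradicting $g>h$, so $g^{*}>h^{*}$. Conversely, if $g^{*}>h^{*}$ then $g^{*}\ge h^{*}$ yields $g\ge h$, and $g=h$ would force $g^{*}=h^{*}$ since $^{*}$ is a function, a contradiction; hence $g>h$. The only genuinely non-elementary step is the passage from $g^{*}+g^{*}\ge h^{*}+h^{*}$ to $g^{*}\ge h^{*}$, which is precisely Proposition \ref{propositionbirkoff} and cannot be obtained by naive cancellation in a lattice-ordered group; every remaining step is monotonicity of $+$ or functionality of $^{*}$, so I anticipate no further obstacle.
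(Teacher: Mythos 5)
Your proof is correct and takes essentially the same route as the paper's: Proposition \ref{propositionbirkoff} supplies the passage from $g^{*}+g^{*}\ge h^{*}+h^{*}$ to $g^{*}\ge h^{*}$ in one direction, and monotonicity of addition gives the converse, the only stylistic difference being that the paper argues with strict inequalities directly while you prove the non-strict equivalence first and upgrade via injectivity of $^{*}$. Your reading of the hypothesis as $g,h\in[id,u]$ (the domain of $^{*}$) is also the paper's implicit intent.
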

    \begin{proof}
        First, assume that $g>h$. Then, $u+g>u+h$. So, $g^*+g^*>h^*+h^*$. Then, by Proposition \ref{propositionbirkoff}, $g^*>h^*$.

        For the converse, assume that $g^*>h^*$. Then, $u+g=g^*+g^*>h^*+g^*>h^*+h^*=u+h$. So, $g>h$.
    \end{proof}
\begin{theorem}
   $\Gamma(G,u,*)$ is a continuous algebra.
\end{theorem}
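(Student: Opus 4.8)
The plan is to build on Mundici's functor. Writing the group additively, the reduct $([\id,u],\oplus,\neg,\id)$ is exactly the MV-algebra $\Gamma(G,u)$ associated to the $\l u$-group $(G,u)$ \cite{CDM}, so axioms \ref{C1}--\ref{C6} hold automatically and nothing new has to be verified for them. Hence the whole content of the theorem lies in the two axioms involving $\kappa$, namely \ref{C7} and \ref{C8}. Before touching these, I would first confirm that $\kappa={}^{*}$ is a genuine unary operation on $[\id,u]$: for $x\in[\id,u]$ the element $x^{*}$ is the unique solution of $x^{*}+x^{*}=u+x$ (unique by the injectivity of doubling recorded in the remark following Theorem \ref{TFAElu*}), and applying Proposition \ref{propositionbirkoff} to $2x^{*}=u+x$ together with $\id\le u+x\le 2u$ shows $\id\le x^{*}\le u$, so $\kappa$ indeed maps $[\id,u]$ into itself.

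For axiom \ref{C7} I would argue by a direct identity with no case analysis. Since $\neg\kappa x=u-x^{*}$ and $u+x=2x^{*}$,
$$x\oplus\neg\kappa x=u\wedge\big(x+u-x^{*}\big)=u\wedge\big((u+x)-x^{*}\big)=u\wedge x^{*}=x^{*}=\kappa x,$$
the last step using $x^{*}\le u$.

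The real work is axiom \ref{C8}, and here the plan is to collapse the claimed equality to a single standard lattice-group inequality. Set $P:=u-x^{*}+y^{*}$ and $Q:=x-y$, so that the two summands become $A:=\neg\kappa x\oplus\kappa y=u\wedge P$ and, after the routine computation $\neg(\neg x\oplus y)=\id\vee(x-y)$, also $B:=\neg(\neg x\oplus y)=\id\vee Q$. One checks $B\in[\id,u]$, whence $\neg B=u-B=u\wedge(u-Q)$ and $B+\neg B=u$; moreover doubling gives the crucial relation $2P=2u-Q$. Because $A\oplus B=u\wedge(A+B)$ and the target value $\nid$ equals $u$, it suffices to prove $A+B\ge u$, and for this I would establish the stronger statement $\neg B\le A$. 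Since $\neg B\le u$ is automatic, this reduces to $\neg B\le P$, i.e. $u\wedge(u-Q)\le P$. Translating by $-P$ and writing $w:=u-P$ (so that $2w=2u-2P=Q$), this inequality becomes $w\wedge(w-Q)=w\wedge(-w)\le\id$, which is the standard fact $w\wedge(-w)\le\id$ valid in every $\l$-group (it follows from $2(w\wedge(-w))\le w+(-w)=\id$ and Proposition \ref{propositionbirkoff}). From $\neg B\le A$ we obtain $A+B\ge\neg B+B=u$, hence $A\oplus B=u=\nid$, which is \ref{C8}.

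I expect the only genuinely delicate point to be \ref{C8}: the naive case split on whether $x\le y$ or $y\le x$ breaks down when $x$ and $y$ are incomparable in the lattice order of $G$, so a case-free argument is needed. The device above---collapsing everything to the identity $w\wedge(-w)\le\id$ through the substitution $w=u-P$ and the relation $2w=Q$---is precisely what avoids that case analysis and keeps the argument uniform across all $2$-divisible $\l u$-groups, not merely the totally ordered ones.
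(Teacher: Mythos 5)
Your proof is correct, and for axiom \ref{C8} it takes a genuinely different route from the paper's. The preliminary step (that $\kappa={}^{*}$ maps $[id,u]$ into itself, via Proposition \ref{propositionbirkoff}) and the verification of \ref{C7} coincide with the paper's argument essentially word for word. For \ref{C8}, however, the paper splits into the two cases $u+(-g)+h\ge u$ and $u+(-g)+h<u$, transfers the comparison through $^{*}$ using Proposition \ref{lem*preservesorder}, and in the second case derives $u<u+(-g^{*})+h^{*}+g+(-h)$ by a monotonicity computation. That dichotomy is exhaustive only when $u+(-g)+h$ and $u$ are comparable, i.e.\ essentially only when $G$ is totally ordered, whereas Definition \ref{d4.3} and the theorem are stated for arbitrary $2$-divisible $\ell u$-groups; the printed proof does not address the incomparable case. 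Your substitution $w=u-P$ with $2w=x-y$, which collapses \ref{C8} to the universally valid $\ell$-group inequality $w\wedge(-w)\le id$ (itself following from $2\bigl(w\wedge(-w)\bigr)\le w+(-w)=id$ and Proposition \ref{propositionbirkoff}), is case-free and handles incomparable $x,y$ as well. So your argument is not merely an alternative: it is strictly more general than the one printed and closes exactly the gap you flag at the end of your proposal. The trade-off is a less transparent chain of identities in place of the paper's direct two-case computation; what it buys is uniformity over all, not necessarily totally ordered, $2$-divisible $\ell u$-groups.
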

\begin{proof}
    For $\kappa:[id,u]\to [id,u]$ defined by $\kappa g=g^*$, we have to first check if it is well-defined.\\
    Let $g\in[id,u]$. Then, $g^*+g^*=u+g\ge u+id\ge id=id+id$. So, by Proposition \ref{propositionbirkoff}, $g^*\ge id$. Similarly, $g^*+g^*=u+g\le u+u$. So, $g^*\le u$. Therefore, $g^*\in [id, u]$.
    Now, $\Gamma(G,u,*)$ follows \ref{C1}-\ref{C6}, by \cite{CM}. We just have to check for \ref{C7}, \ref{C8}.
    For $g\in[id, u]$ we have
    \begin{align*}
        g\oplus \neg\kappa g&=u\wedge (g+u+(-g^*))\\
        &=u\wedge (g^*+g^*+(-g^*))\\
        &=u\wedge g^*\\
        &=g^*\\
        &=\kappa g.
    \end{align*} 
    Now, by Proposition \ref{lem*preservesorder}, we have that $u+(-g)+h\ge u$ if and only if $u+ (-g^*)+h^*\ge u$.
    Now, for $g,h\in G$,
    \begin{align*}
        (\neg\kappa g\oplus\kappa h)\oplus\neg(\neg g\oplus h)&=(u\wedge u+(-g^*)+h^*)\oplus (u+-(u\wedge u+(-g)+h))\\
        &= \begin{cases}
            u\oplus(u+(-u)) &\text{if}\ u+(-g)+h\ge u;\\
            (u+(-g^*)+h^*)\oplus (u+-(u+(-g)+h))&\text{if }u+(-g)+h<u.
        \end{cases}\\
        &=\begin{cases}
            u &\text{if}\ u+(-g)+h\ge u;\\
            u \wedge (u+(-g^*)+h^*+u+(-u)+g+(-h)) &\text{if}\ u+(-g)+h< u.
        \end{cases}\\
        &=\begin{cases}
            u &\text{if}\ u+(-g)+h\ge u;\\
            u \wedge (u+(-g^*)+h^*+g+(-h)) &\text{if}\ u+(-g)+h< u.
        \end{cases}
    \end{align*}
Assume \(u+(-g)+h<u\). Then \(u+(-g^{*})+h^{*}<u\), and hence \(h^{*}<g^{*}\). By monotonicity of addition,
$$h^{*}+g^{*}<g^{*}+g^{*}=u+g .$$
Adding \(h\) to both sides yields $$h^{*}+g^{*}+h<u+g+h,$$
which implies \(g^{*}+h<g+h^{*}\). Consequently, $$u<u+(-g^{*})+h^{*}+g+(-h).$$

 %   Now, if $u+(-g)+h<u$, then 
  %  \begin{align*}
   %     u+(-g^{*})+h^*&<u\\
    %    \implies h^*&<g^*\\
     %   \implies h^*+g^*&<g^*+g^*\\
      %  \implies h^*+g^* +h&<u+g+h\\
       % \implies g^*+h&<u+g+h+ (-h^*)\\
        %\implies g^*+h&<g+h^*\\
        %\implies u&<u+(-g^*)+h^*+g+(-h).
    %\end{align*}
    So, $(\neg\kappa g\oplus\kappa h)\oplus\neg(\neg g\oplus h)=u+ (-id)=\neg id$. Hence, $\Gamma(G,u,*)$ is a continuous algebra.
\end{proof}
On the other hand, we need to construct a $^*$ function for the Chang-group of a continuous algebra.
Let $\A=(A,\oplus,\neg,\id,\kappa)$ be a continuous algebra. Let $G_A$ denote the Chang's group for $(A,\oplus,\neg,\id)$, as defined in $\cite{CDM}$, with the class of $(\mathbf{x},\mathbf{y})$ denoted by $\langle\mathbf{x},\mathbf{y}\rangle$. Then, with strong unit $u=\langle(\nid),(\id)\rangle$, the interval $[id, u]=\set{\langle(a),(\id)\rangle|a\in A}$.
Now, define $^*:[id, u]\to G_A$ by $$\langle(a),(\id)\rangle^*=\langle(\kappa a),(\id)\rangle.$$
Then, we have $$\langle(a),(\id)\rangle^*+\langle(a),(\id)\rangle^*=\langle(\nid,a),(\id)\rangle=\langle(a),(\id)\rangle+\langle(\nid),(\id)\rangle.$$
By Theorem \ref{TFAElu*}, the Chang group of a continuous algebra is a $2$-divisible $\ell u$-group.

\section{Continuous terms and $\l u^*$-terms}
In this section, we introduce continuous terms and $\ell u^{*}$-terms, in a manner
analogous to the MV-terms and $\ell u$-terms defined in \cite{CM}. By
Theorem \ref{TFAElu*}, the existence of a unary operation $^{*}$ on $[\,id,u\,]$
is equivalent to the underlying $\ell u$-group being $2$-divisible. Accordingly,
throughout this section we work with $\ell u$-groups equipped with such a
$^{*}$-operation on $[\,id,u\,]$. We also establish several results that will be
used in the proof of the analogue of Chang’s completeness theorem for continuous
algebras.

Let $S_t:=\set{\id,\oplus,\neg,\kappa,(,),x_1,x_2,\dots,x_t}$. Then, a string on $S_t$ is a finite sequence of elements in $S_t$. 
\begin{definition}[Continuous terms]
    A string, $\tau$, on $S_t$ is said to be a continuous term if and only if there exists a finite sequence of strings, $\tau_1,\dots, \tau_n$ on $S_t$ such that $\tau_n=\tau$ and for each $i\le n$, one of the following holds
\begin{enumerate}[label=S\arabic*.]
    \item $\tau_i=x_i$;
    \item $\tau_i=\id$;
    \item $\tau_i=(\tau_j\oplus\tau_k)$ for some $j,k<i$;
    \item $\tau_i=\neg\tau_j$ for some $j<i$;
    \item $\tau_i=\kappa\tau_j$ for some $j<i$.
\end{enumerate}
The sequence is then called a formation sequence of $\tau$ and each $\tau_i$ is called a subterm of $\tau$.
\end{definition}
\begin{theorem}\label{t5.1}
    Let $\tau$ be a continuous term on $S_t$. Then $\tau$ satisfies exactly one of S1-S5. Moreover, $\tau_j$ (and $\tau_k$ in case of S3) in case of S3-S5 is unique. 
\end{theorem}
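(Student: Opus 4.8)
The plan is to prove this \emph{unique readability} statement in three stages: a first-symbol analysis yielding ``exactly one of S1--S5'', an immediate treatment of the unary rules, and a parenthesis-counting argument for the binary rule S3, which is where the real work lies.

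First I would record a \textbf{first-symbol lemma}: by induction along a formation sequence, the leading symbol of any continuous term is $x_i$ when it is built by S1, $\id$ by S2, the symbol $($ by S3, $\neg$ by S4, and $\kappa$ by S5. Since these five opening symbols are pairwise distinct (and the symbols $)$ and $\oplus$ never occur in initial position), the leading symbol of $\tau$ determines which of S1--S5 can apply. The definition of a continuous term guarantees that at least one clause holds for $\tau_n=\tau$, and the distinctness of opening symbols forbids two from holding simultaneously; this gives the ``exactly one'' assertion. In the unary cases S4 and S5 uniqueness of the displayed subterm is then trivial: if $\tau=\neg\tau_j=\neg\tau_{j'}$ (respectively $\tau=\kappa\tau_j=\kappa\tau_{j'}$), then deleting the leading symbol forces $\tau_j=\tau_{j'}$. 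The atomic cases S1 and S2 carry no subterm, so nothing is to be shown there.

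The substance of the theorem is the binary case S3, and the key device is a \textbf{balance function} $p(\sigma)$ equal to the number of occurrences of $($ in $\sigma$ minus the number of occurrences of $)$. Two facts are proved by induction along formation sequences: (i) $p(\tau)=0$ for every continuous term $\tau$, and (ii) $p(\rho)\ge 0$ for every prefix $\rho$ of a continuous term. From these I would deduce the crucial \textbf{prefix lemma}: no continuous term is a proper prefix of another continuous term. This is shown by strong induction on length, splitting on the common leading symbol via the first-symbol lemma; the atomic cases are immediate, the unary cases reduce to strictly shorter terms, and in the case of a leading $($ one uses (i)--(ii) to force the inner components to align, since extending one term past its matching closing parenthesis would produce a prefix of negative balance, contradicting (ii).

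Finally, with the prefix lemma in hand, uniqueness for S3 follows. If $\tau=(\tau_j\oplus\tau_k)=(\tau_{j'}\oplus\tau_{k'})$, then stripping the outer parentheses makes $\tau_j$ and $\tau_{j'}$ both prefixes of one common string, so one is a prefix of the other and the prefix lemma forces $\tau_j=\tau_{j'}$; cancelling $\tau_j$ together with the ensuing $\oplus$ then yields $\tau_k=\tau_{k'}$. I expect the main obstacle to be precisely the prefix lemma in the $($-case: the bookkeeping that pins down the position of the top-level $\oplus$ (equivalently, the matching closing parenthesis) is exactly what the balance function $p$ is designed to control, and arranging facts (i) and (ii) so that they interact cleanly with the induction on length is the delicate point.
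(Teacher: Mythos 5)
Your proposal is correct and follows essentially the same route as the paper: a first-symbol analysis for the ``exactly one'' claim, trivial treatment of the unary cases, the key lemma that no continuous term is a proper initial segment of another (proved by strong induction on length, with the parenthesis count handling the S3 case), and then S3 uniqueness deduced from that lemma. Your explicit balance function $p$ merely makes precise the parenthesis-counting step that the paper states more tersely.
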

\begin{proof}
    By considering first term of a continuous term, $\tau$, it is clear that $\tau$ satisfies exactly one of 1.-5.
    Now, if $\tau=\neg\tau_1=\neg\tau_2$, then $\tau_1=\tau_2.$
    Similarly, if $\tau=\kappa\tau_1=\kappa\tau_2$, then $\tau_1=\tau_2.$
    Now, first note that no proper initial segment of a continuous term is a continuous term.
    We can prove this by strong induction on the length of a continuous term.
    If $\alpha$ is a continuous term of length $1$, then its proper initial segment has length $0$ and hence is not a term.
    Now, let no proper initial segment of a continuous term of length $< k$ be a continuous term.
    Then, let $\alpha$ be a continuous term of length $k$.
    If $\alpha=(\alpha_i\oplus \alpha_j)$, then any proper initial segment of $\alpha$ has unequal number of $($ and $)$ and hence, is not a continuous term.
    If $\alpha=\neg\alpha_i$, and $\beta=\neg\beta_i$ is a proper initial segment of $\alpha$ such that $\beta$ is a continuous term, then so is $\beta_i$.
    But then $\beta_i$ is a continuous term which is a proper initial segment of $\alpha_i$ of length $<k$.
    This contradicts the induction hypothesis.
    Therefore, $\alpha$ has no proper initial segment which is also a continuous term.
    Similar reasoning works for $\alpha=\kappa\alpha_i$.
    So, by strong induction, no proper segment of any continuous term is a continuous term.
    Now, let $\tau=(\tau_1\oplus \tau_2)=(\sigma_1\oplus \sigma_2)$.
    Then, $\tau_1\oplus\tau_2=\sigma_1\oplus \sigma_2$.
    If $\tau_1\neq \sigma_1$, then $\tau_1$ is a proper initial segment of $\sigma_1$ or vice versa.
    This is not possible, so $\tau_1=\sigma_1$.
    So, $\tau_2=\sigma_2$.
\end{proof}
Let $\A=(A,\oplus,\neg,\id,\kappa)$ be a continuous algebra. Let $\tau$ a continuous term on $S_t$. Let $a_1,\dots, a_n\in A$. Then $\tau^A(a_1,\dots,a_n)$ is the element in $A$ obtained by substituting $x_i$ by $a_i$ in $\tau$ and interpreting $\id,\oplus,\neg,\kappa$ as the corresponding operations in $\A$. This is well-defined by Theorem \ref{t5.1}

\begin{definition}
A continuous equation is a pair of continuous terms, written $\tau=\sigma$, where $\tau,\sigma$ are continuous terms.
\end{definition}
$\vDash$ is a relation between the class of continuous algebra and continuous equation such that $\A\vDash \tau=\sigma$ if and only if $\tau^A(a_1,\dots,a_n)=\sigma^A(a_1,\dots,a_n)$ for all $a_1,\dots,a_n\in A$, where $\A=(A,\oplus,\neg,\id,\kappa)$. If $\A\vDash \tau=\sigma$, then we say that $\A$ satisfies $\tau=\sigma$.
\begin{proposition}\label{l5.1}
    For each continuous algebra $\A$ and continuous terms $\tau,\sigma$, we have $\A\vDash \tau=\sigma$ if and only if $\A\vDash(\neg(\neg\tau\oplus\sigma)\oplus\neg(\neg\sigma\oplus\tau))=\id$.
\end{proposition}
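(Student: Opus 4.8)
The plan is to reduce the statement to Property~\ref{propdistance0iffsamepoints}, applied pointwise over all substitutions of the variables. The essential observation is that the compound term $\neg(\neg\tau\oplus\sigma)\oplus\neg(\neg\sigma\oplus\tau)$ is assembled from $\tau$ and $\sigma$ using only the continuous-algebra operations, so its interpretation at a given tuple is exactly the corresponding combination of the interpretations of $\tau$ and $\sigma$. Nothing beyond this unwinding and a single invocation of the distance-zero criterion is needed.

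First I would fix an arbitrary tuple $\bar a=(a_1,\dots,a_n)\in A^n$ and abbreviate $p:=\tau^A(\bar a)$ and $q:=\sigma^A(\bar a)$. By the recursive clauses defining term interpretation, which are unambiguous by Theorem~\ref{t5.1}, the interpretation of the compound term satisfies
\[
\bigl(\neg(\neg\tau\oplus\sigma)\oplus\neg(\neg\sigma\oplus\tau)\bigr)^A(\bar a)
=\neg(\neg p\oplus q)\oplus\neg(\neg q\oplus p).
\]
Thus, for this fixed tuple, the single equation $\bigl(\neg(\neg\tau\oplus\sigma)\oplus\neg(\neg\sigma\oplus\tau)\bigr)^A(\bar a)=\id$ is literally the right-hand condition of Property~\ref{propdistance0iffsamepoints} instantiated at $x=p$ and $y=q$.

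Applying Property~\ref{propdistance0iffsamepoints}, for each fixed tuple we obtain $p=q$ if and only if $\neg(\neg p\oplus q)\oplus\neg(\neg q\oplus p)=\id$. For the forward direction, if $\A\vDash\tau=\sigma$ then $p=q$ for every tuple, and hence the right-hand expression equals $\id$ for every tuple, giving $\A\vDash\bigl(\neg(\neg\tau\oplus\sigma)\oplus\neg(\neg\sigma\oplus\tau)\bigr)=\id$. For the converse, if that expression is identically $\id$, then for every tuple the right-hand condition of Property~\ref{propdistance0iffsamepoints} holds, whence $p=q$ for every tuple, i.e.\ $\A\vDash\tau=\sigma$. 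Since both $\vDash$-statements quantify over exactly the same set of tuples $\bar a\in A^n$, the two directions combine to yield the claimed equivalence.

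There is no genuine obstacle here: the mathematical content is entirely carried by Property~\ref{propdistance0iffsamepoints}, and the only point requiring (routine) care is the bookkeeping of the term interpretation, namely verifying that evaluating the compound term coincides with forming the stated combination of the evaluations of $\tau$ and $\sigma$. This is precisely what the well-definedness of $\tau^A$ guaranteed by Theorem~\ref{t5.1} provides, so the verification is immediate.
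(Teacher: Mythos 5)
Your proposal is correct and follows exactly the route the paper takes: the paper's entire proof is the one-line remark that the claim follows from Property~\ref{propdistance0iffsamepoints}, and your write-up simply makes explicit the pointwise instantiation and the bookkeeping of term interpretation that the paper leaves implicit.
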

This follows from Proposition \ref{propdistance0iffsamepoints}
\begin{theorem}\label{t5.2}
    An equation is satisfied by every continuous algebra if and only if it is satisfied by every continuous chain.
\end{theorem}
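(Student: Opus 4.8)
The plan is to prove the two directions of the biconditional, with the forward direction being immediate and the reverse direction carrying all the content. Since every continuous chain is in particular a continuous algebra, if an equation $\tau = \sigma$ holds in every continuous algebra then it trivially holds in every continuous chain; this disposes of the ``only if'' direction in one line.

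For the substantive ``if'' direction, I would argue contrapositively: suppose $\tau = \sigma$ fails in some continuous algebra $\A$, and produce a continuous chain in which it also fails. Concretely, if $\A \not\vDash \tau = \sigma$ then there are elements $a_1,\dots,a_n \in A$ with $\tau^A(a_1,\dots,a_n) \neq \sigma^A(a_1,\dots,a_n)$. By Proposition \ref{l5.1} this is equivalent to $\neg(\neg\tau\oplus\sigma)\oplus\neg(\neg\sigma\oplus\tau)$ evaluating to some element $b \neq \id$ at $(a_1,\dots,a_n)$. The strategy is now to separate $b$ from $\id$ using the subdirect representation theorem: since $b \neq \id$, there is a prime continuous ideal $P$ with $b \notin P$, and the quotient map $\A \to \A/P$ is a continuous homomorphism whose target is a continuous chain.

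The key step is then to observe that continuous homomorphisms commute with term evaluation. That is, for any continuous homomorphism $h : \A \to \mathscr{B}$ and any continuous term $\tau$, one has $h(\tau^A(a_1,\dots,a_n)) = \tau^B(h(a_1),\dots,h(a_n))$. This is proved by induction on the length of a formation sequence of $\tau$, using the unique readability guaranteed by Theorem \ref{t5.1} together with the four defining clauses of a continuous homomorphism (which handle the base cases $\tau = x_i$ and $\tau = \id$ and the inductive cases $\oplus$, $\neg$, $\kappa$ respectively). Applying this with $h$ the quotient map onto the chain $\A/P$, the element $\neg(\neg\tau\oplus\sigma)\oplus\neg(\neg\sigma\oplus\tau)$ evaluated in $\A/P$ at $([a_1]_P,\dots,[a_n]_P)$ equals $[b]_P$, which is not $[\id]_P$ precisely because $b \notin P$. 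Hence by Proposition \ref{l5.1} again, $\A/P \not\vDash \tau = \sigma$, exhibiting a continuous chain that refutes the equation and completing the contrapositive.

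The main obstacle I anticipate is the homomorphism--term-evaluation lemma, which although routine in spirit requires the careful induction justified by unique readability; it may be worth stating this commutation property as a separate small lemma rather than folding it into the proof. The rest of the argument is bookkeeping: the existence of a prime ideam avoiding $b$ and the fact that quotients by prime ideals are chains were both already used in the proof of the subdirect representation theorem (Theorem referencing \cite{CM}), so I would simply invoke those facts here.
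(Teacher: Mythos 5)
Your proposal is correct and follows essentially the same route as the paper: the paper reduces to an equation of the form $\tau=\id$ via Proposition \ref{l5.1} and then pushes a witnessing tuple through the subdirect embedding into a product of chains, which is just a packaged form of your direct appeal to a prime ideal avoiding the offending element $b$ together with the fact that quotients by primes are chains. The only difference is presentational: you explicitly isolate the lemma that continuous homomorphisms commute with term evaluation, which the paper uses silently in the step from $h(\tau^A(a_1,\dots,a_n))\ne\mathbf{0}$ to $\A_j\not\vDash\tau=\id$, and making that lemma explicit is a reasonable improvement.
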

\begin{proof}
    One direction is trivial.
    Due to Proposition \ref{l5.1}, we can assume without the loss of generality, the equation is $\tau=\id$ for some continuous term $\tau$ on $S_n$.
    Let $\mathscr{C}\vDash\tau=\id$ for all continuous chains $\mathscr{C}$.
    Let $\A=(A,\oplus,\neg,\id,\kappa)$ be a continuous algebra.
    Then, if $\A$ is a subdirect product of a family of continuous chains, $\set{\A_i}_{i\in I}$ for some index set $I$, where $\A_i=(A_i,\oplus_i,\neg_i,\id_i,\kappa_i)$.
    Then, $\ex h:A\to\prod_{i\in I}A_i$ such that $h$ is an injective continuous homomorphism and $\pi_j\circ h$ is surjective for each $j\in I$.
    Let $a_1,\dots,a_n\in A$.
    If $\tau^A(a_1,\dots,a_n)\ne\id$, then
    \begin{align*}
        \tau^A(a_1,\dots,a_n)\ne\id &
        \implies h(\tau^A(a_1,\dots,a_n))\ne\mathbf{0}\\
        &\implies \pi_j\circ h(\tau^A(a_1,\dots,a_n))\ne\id_j\text{ for some }j\in I\\
        &\implies \A_j\not\vDash \tau=\id
    \end{align*}
    This contradicts our assumption that every chain satisfies $\tau=\id$.
    So, $\tau^A(a_1,\dots,a_n)=\id$ and hence $A\vDash\tau=\id$.
\end{proof}
\begin{definition}[$\l u^*$-terms]
    Let $T_t:=\set{0,+,-,^*,\wedge,\vee,(,),x_1,x_2,\dots,x_t}$. Then, a string on $T_t$ is a finite sequence of elements in $T_t$. A string, $\tau$, on $T_t$ is said to be an $\l u^*$-term if and only if there exists a finite sequence of strings, $\tau_1,\dots, \tau_n$ on $T_t$ such that $\tau_n=\tau$ and for each $i\le n$, one of the following holds
\begin{enumerate}[label=T\arabic*.]
    \item $\tau_i=x_i$;
    \item $\tau_i=0$;
    \item $\tau_i=(\tau_j+\tau_k)$ for some $j,k<i$;
    \item $\tau_i=(\tau_j\wedge\tau_k)$ for some $j,k<i$;
    \item $\tau_i=(\tau_j\vee\tau_k)$ for some $j,k<i$;
    \item $\tau_i=-\tau_j$ for some $j<i$;
    \item $\tau_i=^*\tau_j$ for some $j<i$.
\end{enumerate}
The sequence is then called a formation sequence of $\tau$ and each $\tau_i$ is called a subterm of $\tau$.
\end{definition}
\begin{theorem}
    Let $\tau$ be an $\l u^*$-term on $T_t$. Then $\tau$ satisfies exactly one of T1-T7 Moreover, $\tau_j$ (and $\tau_k$ in case of T3-T5) in case of T3-T7 is unique. 
\end{theorem}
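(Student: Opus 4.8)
The plan is to follow exactly the strategy used in the proof of Theorem \ref{t5.1}, adapting it to accommodate the three binary operations $+,\vee,\wedge$ and the additional unary operation ${}^*$. First I would observe that the leading symbol of $\tau$ determines the outermost formation rule: a string beginning with $x_i$ can only arise from T1, one beginning with $0$ only from T2, one beginning with $-$ only from T6, one beginning with ${}^*$ only from T7, and one beginning with $($ only from one of T3--T5. Since these leading symbols are mutually exclusive, $\tau$ satisfies at most one of the cases T1--T7, the choice among T3--T5 being settled below once the left subterm is fixed. For the unary cases, uniqueness of $\tau_j$ is immediate: if $\tau=-\tau_j=-\sigma_j$, deleting the leading $-$ gives $\tau_j=\sigma_j$, and likewise for ${}^*$.

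The substantive work lies in the binary cases, and the key lemma I would establish first is that no proper initial segment of an $\l u^*$-term is itself an $\l u^*$-term. I would prove this by strong induction on the length of the term. In the base case, a term of length one ($x_i$ or $0$) has only the empty proper initial segment, which is not a term. In the inductive step, if $\tau=(\tau_j\,\#\,\tau_k)$ with $\#\in\{+,\vee,\wedge\}$, then the outer $($ is matched only by the final symbol, so any proper initial segment has strictly more left than right parentheses and, being unbalanced, cannot be a term (every term carrying equally many left and right parentheses, as one checks by a parallel induction). If instead $\tau=-\tau_j$ or $\tau={}^*\tau_j$, then a proper initial segment of $\tau$ that were a term would begin with the same leading symbol, and deleting that symbol would yield a term that is a proper initial segment of $\tau_j$, contradicting the induction hypothesis since $\tau_j$ is shorter than $\tau$.

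With this lemma in hand, uniqueness in the binary case follows verbatim as in Theorem \ref{t5.1}. Suppose $\tau=(\tau_j\,\#\,\tau_k)=(\sigma_j\,\#'\,\sigma_k)$ with $\#,\#'\in\{+,\vee,\wedge\}$. Deleting the outer parentheses and comparing from the left, if $\tau_j\neq\sigma_j$ then one of $\tau_j,\sigma_j$ is a proper initial segment of the other, which the lemma forbids; hence $\tau_j=\sigma_j$. The connective immediately following $\tau_j$ is then forced, so $\#=\#'$, which simultaneously pins down exactly which of T3--T5 applies, and consequently $\tau_k=\sigma_k$. I expect the main obstacle to be the parenthesis-balancing step inside the no-proper-initial-segment lemma: one must confirm that the two extra binary connectives and the unary ${}^*$ do not create a coincidental readability overlap. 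Since ${}^*$ and $-$ introduce no parentheses and each binary formation step introduces exactly one matched pair, the counting argument transfers unchanged from the continuous-term case, and the theorem follows.
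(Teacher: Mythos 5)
Your proposal is correct and follows essentially the same route as the paper, which simply states that a proof analogous to that of Theorem \ref{t5.1} applies; you carry out that adaptation explicitly (leading-symbol case analysis, the no-proper-initial-segment lemma via parenthesis counting and stripping of unary symbols, and the resulting uniqueness of the decomposition, including the determination of which of T3--T5 applies). No gaps.
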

\begin{proof}
    A proof similar to that of Theorem \ref{t5.1} proves this theorem.
\end{proof}
Let $G$ be a $2$-divisible $\l u$-group. Let $\tau$ an $\l u^*$ term on $T_{t}=\set{0,+,-,^*,\wedge,\vee,(,),x_1,x_2,\dots,x_t}$. Let $g_1,\dots, g_n\in G$. Then $\tau^G(g_1,\dots,g_n)$ is the element in $G$ obtained by substituting $x_i$ by $g_i$ in $\tau$ and interpreting $\id,+,-,\wedge,\vee,^*$ as the corresponding operations in $G$; where $(^*\tau)^G$ is interpreted as $f^{-1}(u+\tau^G)$ where $f:G\to G$ is the automorphism defined in Theorem \ref{TFAElu*}. In particular, if $\tau^G\in [id, u]$, then  $(^*\tau)^G=(\tau^G)^*$.

Now, define a map from continuous terms on $S_t$ to $\l u^*$-terms in $T_{t+1}=\set{0,+,-,^*,\wedge,\vee,(,),x_1,x_2,\dots,x_t,y}$ such that 
\begin{enumerate}
    \item $\id\mapsto 0$;
    \item $x_i\mapsto x_i$;
    \item $(\tau_i\oplus\tau_j)\mapsto(y\wedge(\hat\tau_i+\hat\tau_j))$;
    \item $\neg\tau_i\mapsto(y+-\hat\tau_i)$;
    \item $\kappa\tau_i\mapsto ^*(y+\hat\tau_i)$,
\end{enumerate}
where $\hat\tau$ is the image of $\tau$.
This map is well defined by unique readability theorem.
\begin{proposition}\label{l5.2}
    Let $\A=(A,\oplus,\neg,\id,\kappa)$ be a continuous algebra. Let $G_A$ be its Chang group. Also, denote the $\langle(a),(\id)\rangle$ by $[a]$. Let $\tau$ be a continuous term. Then, for every $a_1,\dots,a_n\in A$, $[\tau^A(a_1,\dots,a_n)]=\hat\tau^{G_A}([a_1]\dots,[a_n],[\nid])$.
\end{proposition}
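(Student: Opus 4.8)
The plan is to argue by structural induction on a formation sequence $\tau_1,\dots,\tau_n=\tau$ of the continuous term $\tau$, using the unique readability of continuous terms (Theorem \ref{t5.1}) so that the induction hypothesis applies unambiguously to the immediate subterms supplied by rules S3--S5. Write $\vec a=(a_1,\dots,a_n)$, and recall that the auxiliary variable $y$ is assigned the value $[\nid]=u$, the strong unit of $G_A$; the role of $y$ is precisely to feed this strong unit into the $\Gamma$-operations of Definition \ref{d4.3}, which is why $\neg$ and $\oplus$ acquire a $u$ after translation. The induction hypothesis reads: for every proper subterm $\sigma$ of $\tau$ one has $[\sigma^A(\vec a)]=\hat\sigma^{G_A}([a_1],\dots,[a_n],u)$, and each such value is of the form $[b]$ with $b\in A$, hence lies in $[id,u]$.

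For the base cases, if $\tau=x_i$ then $\hat\tau=x_i$ and both sides equal $[a_i]$; if $\tau=\id$ then $\hat\tau=0$, and since the Mundici embedding $a\mapsto[a]$ carries $\id$ to the identity element of $G_A$, both sides equal that identity. For the two MV-clauses I would use that $a\mapsto[a]$ is the isomorphism $\A\cong\Gamma(G_A,u,*)$, so that $[b\oplus c]=u\wedge([b]+[c])$ and $[\neg b]=u+(-[b])$ by Definition \ref{d4.3}. Thus for $\tau=(\tau_i\oplus\tau_j)$ the translate $\hat\tau=(y\wedge(\hat\tau_i+\hat\tau_j))$ evaluates, using the induction hypothesis and $y\mapsto u$, to $u\wedge([\tau_i^A(\vec a)]+[\tau_j^A(\vec a)])=[\tau_i^A(\vec a)\oplus\tau_j^A(\vec a)]=[\tau^A(\vec a)]$; the clause $\tau=\neg\tau_i$ is identical, using $[\neg b]=u+(-[b])$.

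The substantive step is the $\kappa$-clause $\tau=\kappa\tau_i$, the only rule that invokes the $^*$-symbol and its group interpretation $(^*\sigma)^{G_A}=f^{-1}(u+\sigma^{G_A})$, where $f$ is the doubling automorphism of Theorem \ref{TFAElu*}. The two ingredients I would lean on are the defining identity of the $^*$-function on the Chang group, $[b]^*=[\kappa b]$ (equivalently $[b]^*+[b]^*=u+[b]$, i.e.\ $f([\kappa b])=u+[b]$), and the induction hypothesis $\hat\tau_i^{G_A}(\dots)=[\tau_i^A(\vec a)]\in[id,u]$. The goal is to show that the translate of $\kappa\tau_i$ evaluates to $f^{-1}(u+[\tau_i^A(\vec a)])=[\tau_i^A(\vec a)]^*=[\kappa(\tau_i^A(\vec a))]=[\tau^A(\vec a)]$. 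The delicate point, which I expect to be the main obstacle, is to verify that the argument presented to $^*$ after translation really evaluates to $[\tau_i^A(\vec a)]$, lying in $[id,u]$ so that the special case $(^*\sigma)^{G_A}=(\sigma^{G_A})^*$ of the interpretation is in force; this is exactly where the assignment $y\mapsto u$ must be reconciled with the relation $f([\kappa b])=u+[b]$. Once that reconciliation is made the $^*$-clause closes, and since every continuous term is built by finitely many applications of S1--S5, the induction terminates and the proposition follows.
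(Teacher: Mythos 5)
Your overall strategy coincides with the paper's: induction on the structure of $\tau$ (equivalently, on the number of operations in a formation sequence), with the base cases and the $\oplus$- and $\neg$-clauses delegated to Mundici's isomorphism $\A\cong\Gamma(G_A,u,*)$, and the whole weight carried by the $\kappa$-clause, which the paper closes exactly as you propose, via the defining identity $\langle(a),(\id)\rangle^*=\langle(\kappa a),(\id)\rangle$ of the $^*$-function on the Chang group.

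The one point you explicitly leave open --- reconciling the assignment $y\mapsto u$ with the interpretation of $^*$ --- does need to be settled, because as literally stated the paper's two conventions clash. The translation clause sends $\kappa\tau_i$ to ${}^*(y+\hat\tau_i)$, so the argument handed to $^*$ evaluates to $u+\hat\tau_i^{G_A}$, \emph{not} to $\hat\tau_i^{G_A}$ as your sketch of the goal suggests; if one then also applies the stated interpretation $({}^*\sigma)^{G}=f^{-1}(u+\sigma^{G})$, the strong unit is added twice and one obtains $f^{-1}(2u+[b])\ne[\kappa b]$. The consistent reading --- the one the paper's own one-line computation $\hat\tau^{G_A}=(\hat\sigma^{G_A})^*$ and its later use in the completeness proof (where $\hat v({}^*(y+\sigma_{2i}))=\frac{1}{2}(\mathbf{h}_0+\hat v(\sigma_{2i}))$) silently adopt --- is that $^*$ is interpreted as $f^{-1}$ alone, the ``$u+$'' being supplied by the explicit $y$ inside the translated term. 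With that reading, $({}^*(y+\hat\tau_i))^{G_A}=f^{-1}\bigl(u+[\tau_i^A(a_1,\dots,a_n)]\bigr)=[\tau_i^A(a_1,\dots,a_n)]^*=[\kappa\tau_i^A(a_1,\dots,a_n)]=[\tau^A(a_1,\dots,a_n)]$, which is exactly the chain of equalities in the paper. So your argument is correct once this convention is fixed; the obstacle you flag is an artifact of the paper's definitions rather than a missing idea, but a complete write-up should state which convention is in force instead of deferring the reconciliation.
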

\begin{proof}
    We can prove this by induction on number of operations in $\tau$.
    We just have to check for  $\kappa$ during induction, as the rest follows from MV-algebra.
    If $\tau=\kappa\sigma$ for some continuous term $\kappa$, then $\sigma$ has lesser number of operations than $\tau$.
    So, by induction hypothesis, $\hat\sigma^{G_A}([a_1],\dots,[a_n],[\nid])=[\sigma^A(a_1,\dots,a_n)]$.
    So, we have
    \begin{align*}
        \hat\tau^{G_A}([a_1],\dots,[a_n],[\nid])&=\hat\sigma^{G_A}([a_1],\dots,[a_n],[\nid])^*\\
        &=[\sigma^A(a_1,\dots,a_n))]^*\\
        &=[\kappa\sigma^A(a_1,\dots,a_n)]\\
        &=[\tau^A(a_1,\dots,a_n)].
    \end{align*}
\end{proof}\label{l5.3}
\begin{proposition}
    Let $G$ be an $2$-divisible $\l u$-group with strong unit $u$. Let $\A=\Gamma(G, u,*)$. Let $\tau$ be a continuous term on $S_n$. Let $g_1,\dots,g_n\in [id, u]\subseteq G$. Then, $\tau^{[id, u]}(g_1,\dots,g_n)=\hat\tau^{G}(g_1,\dots,g_n,u)$.
\end{proposition}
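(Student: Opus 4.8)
The plan is to argue by induction on the number of operation symbols appearing in the continuous term $\tau$. Unique readability (Theorem \ref{t5.1}) guarantees that $\tau$ matches exactly one of S1--S5 and that its immediate subterms are uniquely determined, so that both $\tau^{[id,u]}$ and its translate $\hat\tau$ are well defined and the induction is well founded. Writing $\vec g=(g_1,\dots,g_n)$, the standing fact I will use repeatedly is that every subterm of $\tau$ evaluates into $[id,u]$; that is, $\sigma^{[id,u]}(\vec g)\in[id,u]$ for each continuous subterm $\sigma$. This is itself a routine induction, since $\Gamma(G,u,*)$ is a continuous algebra and hence $[id,u]$ is closed under $\oplus$, $\neg$, $\kappa$ and contains $id$. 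This closure is precisely what allows the group-level operations to restrict correctly to the operations of Definition \ref{d4.3}.

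For the base cases, if $\tau=x_i$ then $\hat\tau=x_i$ and both sides equal $g_i$, and if $\tau=\id$ then $\hat\tau=0$ and both sides equal $id$. The two MV-style inductive cases are the classical Mundici translation and carry over unchanged. If $\tau=\sigma\oplus\rho$, the translation gives $\hat\tau=(y\wedge(\hat\sigma+\hat\rho))$; evaluating at $y=u$ and invoking the inductive hypothesis on $\sigma$ and $\rho$ rewrites $u\wedge(\hat\sigma^{G}+\hat\rho^{G})$ as $u\wedge(\sigma^{[id,u]}(\vec g)+\rho^{[id,u]}(\vec g))$, which is exactly $\sigma^{[id,u]}(\vec g)\oplus\rho^{[id,u]}(\vec g)$ by Definition \ref{d4.3}. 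If $\tau=\neg\sigma$, then $\hat\tau=(y+(-\hat\sigma))$ evaluates at $y=u$ to $u+(-\sigma^{[id,u]}(\vec g))=\neg(\sigma^{[id,u]}(\vec g))$.

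The only genuinely new case is $\tau=\kappa\sigma$. Here the inductive hypothesis gives $\hat\sigma^{G}(\vec g,u)=\sigma^{[id,u]}(\vec g)$, and by the closure fact this value lies in $[id,u]$. Hence the argument presented to the symbol ${}^*$ lies in $[id,u]$, so the interpretation of ${}^*$ collapses to its sharp form $(\cdot)^{*}$ and yields $(\sigma^{[id,u]}(\vec g))^{*}$. By Definition \ref{d4.3} this is $\kappa(\sigma^{[id,u]}(\vec g))=\tau^{[id,u]}(\vec g)$, closing the induction.

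I expect the $\kappa$ step to be the crux. In general the interpretation of ${}^*$ in $G$ is $f^{-1}(u+(\cdot))$ (with $f$ the automorphism of Theorem \ref{TFAElu*}), which reduces to the intended halving operation $(\cdot)^{*}$ only when its argument is confined to $[id,u]$; outside this interval $f^{-1}(u+(\cdot))$ acquires a spurious additive shift by $id^{*}=f^{-1}(u)$ and no longer agrees with $\kappa$. The main work is therefore to maintain the invariant that every subterm evaluates into $[id,u]$, so that the sharp form of the ${}^*$-clause is legitimately available exactly when the $\kappa$-case is reached. Once this invariant is secured, all remaining steps are direct unwindings of the translation rules together with Definition \ref{d4.3}.
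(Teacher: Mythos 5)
Your proposal is correct and follows essentially the same route as the paper: induction on the number of operation symbols, with the MV cases deferred to the classical Mundici translation and only the $\kappa$ case worked out, where the inductive hypothesis lets $\kappa(\sigma^{[id,u]}(\vec g))=(\hat\sigma^{G}(\vec g,u))^{*}=\hat\tau^{G}(\vec g,u)$. The one difference is that you make explicit the invariant that every subterm evaluates into $[id,u]$ (so that the group-level interpretation of ${}^*$ agrees with the sharp operation $(\cdot)^{*}$ of Definition \ref{d4.3}); the paper leaves this implicit, since $\sigma^{[id,u]}(\vec g)$ lies in $[id,u]$ by definition of $\Gamma(G,u,*)$ and equals $\hat\sigma^{G}(\vec g,u)$ by the inductive hypothesis.
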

\begin{proof}
    We can prove this by induction on number of operations in $\tau$.
    We just have to check for  $\kappa$ during induction, as the rest follows from MV-algebra.
    If $\tau=\kappa\sigma$ for some continuous term $\kappa$, then $\sigma$ has lesser number of operations than $\tau$.
    So, by induction hypothesis, we have
    \begin{align*}
        \sigma^{[id, u]}(g_1,\dots,g_n)&=\hat\sigma^G(g_1,\dots,g_n,u)\\
        \implies \tau^{[id, u]}(g_1,\dots,g_n)&=\kappa\sigma^{[id,u]}(g_1,\dots,g_n)\\
        &=\kappa\hat\sigma^G(g_1,\dots,g_n,u)\\
        &=\hat\sigma^G(g_1,\dots,g_n,u)^*\\
        &=\hat\tau^G(g_1,\dots,g_n,u).
    \end{align*}
\end{proof}
\section{Weak Completeness Theorem For Continuous Logic}
In this section, we prove the analogue for Chang's completeness theorem for continuous algebra. Then we use it to prove weak completeness for continuous propositional logic algebraically, which is the main goal of this article. 
\begin{theorem}
    A continuous equation is satisfied by every continuous algebra if and only if it is satisfied by $[0,1]$.
\end{theorem}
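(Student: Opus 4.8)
The forward implication is immediate, since the Example exhibits $[0,1]$ as a continuous algebra, so any equation holding in all continuous algebras holds in $[0,1]$. For the converse the plan is to argue by contraposition, transporting a failure of the equation down to the standard algebra through the Chang-group correspondence of the previous sections. So suppose the equation fails in some continuous algebra. By Theorem \ref{t5.2} it already fails in some continuous chain $\mathscr{C}$, and by Proposition \ref{l5.1} I may assume the equation has the form $\tau=\id$. Thus there are $a_1,\dots,a_n\in\mathscr{C}$ with $\tau^{\mathscr{C}}(a_1,\dots,a_n)\neq\id$, and since $\mathscr{C}$ is totally ordered this forces $\tau^{\mathscr{C}}(a_1,\dots,a_n)>\id$.

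Next I pass to the Chang group $G$ of $\mathscr{C}$, which by Theorem \ref{TFAElu*} and the construction at the end of Section 4 is a totally ordered $2$-divisible $\l u$-group with strong unit $u=[\nid]$. Writing $\hat\tau$ for the associated $\l u^{*}$-term and $[a_i]$ for the images of the $a_i$ in $[\id,u]$, Proposition \ref{l5.2} gives $v:=\hat\tau^{G}([a_1],\dots,[a_n],u)=[\tau^{\mathscr{C}}(a_1,\dots,a_n)]$, so $v>\id$ in $G$. I then replace $G$ by the $\mathbb{Z}[1/2]$-submodule $H$ generated by $[a_1],\dots,[a_n]$ and $u$. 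As $G$ is torsion-free, $H$ is a finitely generated torsion-free $\mathbb{Z}[1/2]$-module, hence free of finite rank; it inherits the total order of $G$ and is closed under $+,-,\wedge,\vee$ and under the halving used to interpret $^{*}$ (because $H$ is $2$-divisible). In particular $u$, the value $v$, and all intermediate subterm-values arising when $\hat\tau$ is evaluated lie in $H$.

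The crux is to produce a unital group homomorphism $\Phi:H\to\mathbb{R}$ with $\Phi(u)=1$ preserving the finitely many strict order relations relevant to the term. By Hahn's embedding theorem I embed the finite-rank totally ordered group $H$ order-preservingly into $(\mathbb{R}^{m},\mathrm{lex})$ and then compose with the functional $(x_1,\dots,x_m)\mapsto\sum_{i}\varepsilon^{i-1}x_i$. For each strict inequality $w<w'$ among the finitely many subterm-values, together with $\id<u$ and $\id<v$, the leading nonzero coordinate of the lexicographic difference dominates for small $\varepsilon>0$, so choosing $\varepsilon$ below all the resulting thresholds gives a homomorphism preserving exactly these comparisons; rescaling by $1/\Phi(u)$ normalizes $\Phi(u)=1$. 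Being unital, $\Phi$ automatically commutes with $^{*}$, since $g^{*}+g^{*}=u+g$ forces $\Phi(g^{*})=\frac{1}{2}(1+\Phi(g))$, which is precisely $^{*}$ in $\Gamma(\mathbb{R},1,*)=[0,1]$ (Definition \ref{d4.3}); it commutes with $+,-$ as a homomorphism and with each $\wedge,\vee$ node because the relevant comparison was preserved. A straightforward induction over the formation sequence of $\hat\tau$ then yields $\hat\tau^{\mathbb{R}}(\Phi([a_1]),\dots,\Phi([a_n]),1)=\Phi(v)>0$, and by the concluding Proposition of Section 5 the left side equals $\tau^{[0,1]}(\Phi([a_1]),\dots,\Phi([a_n]))$, which is therefore $\neq\id$; hence $[0,1]$ refutes the equation. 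The main obstacle is exactly the possible non-archimedean behavior of $G$: a single homomorphism into $\mathbb{R}$ need not keep $v$ positive, and it is the $\varepsilon$-perturbation of the Hahn embedding that lets me preserve the finitely many comparisons that actually matter for one fixed term.
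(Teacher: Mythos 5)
Your argument is correct and shares the paper's overall skeleton (contraposition; reduction to a continuous chain via Theorem \ref{t5.2} and Proposition \ref{l5.1}; passage to the totally ordered Chang group; construction of a group homomorphism into $\mathds{R}$ sending $u$ to $1$ and preserving the finitely many order relations among subterm values), but it takes a genuinely different route at the two technical points where the halving operation and the order must be controlled. Where the paper works with the plain subgroup $S\cong\mathds{Z}^r$ generated by $[a_1],\dots,[a_n],[\nid]$ and compensates for the non-divisibility of $\mathds{Z}^r$ by scaling all the data by $2^{t+1}$ (so that the at most $t$ halvings arising along the formation sequence of $\tau$ remain in $\mathds{Z}^r$), you pass to the $\mathds{Z}[1/2]$-submodule $H$, which is free of finite rank over the PID $\mathds{Z}[1/2]$ and closed under halving by construction; this is cleaner and eliminates the $2^{t+1}$ bookkeeping. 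More substantially, where the paper follows the elementary Cignoli--Mundici template of \cite{CM} --- forming the cone $P^{*}$ spanned by the consecutive differences $\mathbf{d}_j$, separating it by a vector $\mathbf{g}$ with $\mathbf{g}\cdot\mathbf{d}_j>0$, and perturbing $\mathbf{g}$ to have $\mathds{Q}$-linearly independent coordinates so that $\le_{P'}$ is total --- you invoke Hahn's embedding theorem to realize $H$ inside $(\mathds{R}^m,\mathrm{lex})$ and then apply the $\varepsilon$-weighted functional $\sum_i\varepsilon^{i-1}x_i$. Your route is shorter and conceptually transparent, but it imports a considerably heavier classical theorem, against the self-contained ``elementary proof'' spirit the paper inherits from \cite{CM}; both constructions deliver exactly the same object, namely a unital homomorphism preserving the finitely many comparisons relevant to the one fixed term, and your observation that unitality forces compatibility with $^{*}$ via $g^{*}+g^{*}=u+g$ is exactly the point that makes the MV-argument extend to the continuous setting. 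One detail to make explicit in a final write-up: the preserved comparisons must include $\id\le[a_i]\le u$ for every $i$ (so that each $\Phi([a_i])$ lands in $[0,1]$ and the final proposition of Section 5 applies) as well as the comparison at every $\wedge$-node between $u$ and the corresponding sum of subterm values; weak comparisons that happen to be equalities are preserved by any homomorphism, so only finitely many strict ones constrain the choice of $\varepsilon$.
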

\begin{proof}
     Suppose an equation on $S_n$ fails in a continuous algebra $\A=(A,\oplus,\neg,\id,\kappa)$. 
     Without the loss of generality, let the equation be $\tau=\id$. We can assume this  due to Proposition \ref{l5.1}.
     Without the loss of generality, let $\A$ be a continuous chain. We can assume this since if an equation is not satisfied by a continuous algebra, then there exists some continuous chain which does not satisfy that equation, by Theorem \ref{t5.2}.
     Then, there exist \( a_1, \dots, a_n \in A \) such that \( \id<\tau^A(a_1, \dots, a_n) \le \nid \).
     Let \( G_A \) denote the Chang-group of \( \A \).
     Then, we have that 
     $[\id] \prec [\tau^A(a_1,\dots,a_n))]\preceq [\nid].$
     By Proposition \ref{l5.2}, we get
     \[
    [\id] \prec \hat{\tau}^{G_A}([a_1],\dots,[a_n],[\nid]) \preceq [\nid].
    \]  
    Let $S$ be the subgroup of \( G_A \) generated by the elements \( [a_1],\dots,[a_n],[\nid] \). Then, $S$ has an ordering, induced by $G_A$.

    \smallskip
    Note that $G_A$ is an $\l$-group and hence a torsion free abelian group. Since $S$ is a subgroup of $G_A$, it is also a torsion free abelian group. Also, $S$ is finitely generated.
    So, by the fundamental theorem on torsion-free abelian groups, there exists $\Phi:S\to \mathds{Z}^r$, a group isomorphism between $\mathds{Z}^r$ and $S$ for some integer \( r \geq 1 \).
    Let $P_0:=\set{x\in S| [\id]\preceq x}$. Let $P:=\Phi(P_0)$.
    Note that since $\Phi$ is an isomorphism and the ordering on $G_A$ is total, 
    \begin{equation}\label{P-P}
        P \cap -P = \{0\} \quad \text{and} \quad P \cup -P = \mathds{Z}^r
    \end{equation}
    Now, define a relation, $\le_P$, on $\mathds{Z}^r$ by \(\fall  \mathbf{h}, \mathbf{k} \in \mathds{Z}^r \), 
    \[\mathbf{h} \leq_P \mathbf{k} \quad \text{iff} \quad \mathbf{k} - \mathbf{h} \in P.\]
    By the definition of $\le_P$, and \ref{P-P}, $\le_P$ is a total ordering on $\mathds{Z}^r$.
    \smallskip
    
    Let \( \tau_1,\tau_2, \dots, \tau_t \) be a formation sequence of $\tau$. Without the loss of generality, let $\tau_i=x_i$ for $i\le n$. Also, let $\id$ be a subterm of $\tau$.
    Now, we create a formation sequence of $\hat{\tau}$ using the formation sequence of $\tau$ as follows:
    \begin{enumerate}
        \item Let $\sigma_0=y$.
        \item If $\tau_i=x_i$, let $\sigma_{2i-1}=0$ and $\sigma_{2i}=x_i$.
        \item If $\tau_i=\id$, let $\sigma_{2i-1}=0$ and $\sigma_{2i}=0$.
        \item If $\tau_i=(\tau_j\oplus \tau_k)$, let $\sigma_{2i-1}=(\sigma_{2j}+\sigma_{2k})$ and $\sigma_{2i}=(y\wedge (\sigma_{2j}+\sigma_{2k}))$.
        \item If $\tau_i=\neg\tau_j$, let $\sigma_{2i-1}=-\sigma_{2j}$ and $\sigma_{2i}=(y + -\sigma_{2j})$.
        \item If $\tau_i=\kappa\tau_j$, let $\sigma_{2i-1}=y+\sigma_{2j}$ and $\sigma_{2i}=^*(y + \sigma_{2j})$.
    \end{enumerate}
    \begin{claim} $\hat\tau_k=\sigma_{2k}$ for all $1\le k\le t$.
    \end{claim}
    
        For $k=1$, $\hat\tau_k=\sigma_{2k}=x_k$.
        Let $\hat\tau_{k}=\sigma_{2k}$ for $1\le k<i\le t$.
        Then, if $\tau_i=x_i$, then $\hat\tau_i=\sigma_{2i}=x_i$.
        If $\tau_i=\id$, then $\hat\tau_i=\sigma_{2i}=0$.
        If $\tau_i=(\tau_j\oplus \tau_k)$, then $\hat\tau_i=(y\wedge(\hat\tau_j+\hat\tau_k))=(y\wedge(\sigma_{2j}+\sigma_{2k}))=\sigma_{2i}$.
        If $\tau_i=\neg\tau_j$, then $\hat\tau_i=(y+-\hat\tau_j))=(y+-\sigma_{2j})=\sigma_{2i}$.
        If $\tau_i=(\tau_j\oplus \tau_k)$, then $\hat\tau_i=^*(y+\hat\tau_j)=^*(y+\sigma_{2i})=\sigma_{2i}$.
        So, by strong induction, $\hat\tau_i=\sigma_{2i}$ for all $1\le i\le t$.

    Let  $\Phi(2^{t+1}[\nid])=\mathbf{h}_0$ and $\Phi(2^{t+1}[a_i])=\mathbf{h}_i$ for $1\le i\le n$. Now, consider the map $v:\set{\sigma_0,\sigma_{2},\sigma_4,\dots, \sigma_{2n}}\to \mathds{Z}^r$ defined by $\sigma_{2i}\mapsto \mathbf{h}_i$.
    Then, $v$ can be extended to $\hat{v}:\set{\sigma_{2i}|0\le i\le t}\to \mathds{Z}^r$ such that 
    \begin{enumerate}
        \item $\hat{v}(\sigma_{2i})=v(\sigma_{2i})$ for $i\le n$.
        \item $\hat{v}(0)=\mathbf{0}=(0,0,\dots,0)$.
        \item $\hat{v(}(y\wedge (\sigma_{2i}+\sigma_{2j})))=\mathbf{h}_0\wedge_P (\hat{v}(\sigma_{2i})+\hat{v}(\sigma_{2j}))$.
        \item $\hat{v}((y + -\sigma_{2i}))=\mathbf{h}_0-\hat{v}(\sigma_{2i})$.
        \item $\hat{v}(^*(y + \sigma_{2i}))=\dfrac{1}{2}(\mathbf{h}_0+\hat{v}(\sigma_{2i}))$.
    \end{enumerate}
    By unique readability and formation of $\sigma_{2i}$, this extension is well-defined. Note that since formation sequence of $\tau$ has $t$ terms and $2^{t+1}$ divides $\textbf{h}_i$ for each $i=0,\dots,n$, $\hat{v}(^*(y + \sigma_{2i}))\in \mathds{Z}^r$ as we are multiplying by half for each $\kappa$ which may appear at most $t-1$ many times in the formation sequence of $\tau$.
    Denote $\hat{v}(\sigma_{2i})$ by $\mathbf{h}_i$ for $i\le t$.
    Let $T$ be the totally ordered group $(\mathds{Z}^r,\le_P)$.
    \begin{claim}
$\mathbf{0}\le_P\mathbf{h}_i\le_P\mathbf{h}_0$ for $0\le i\le t$.
\end{claim}
        
            We can prove  this by induction on $i$.
            If $i=0$, then $\mathbf{h}_0=\Phi(2^{t+1}[\nid])$.
            So, $$\mathbf{h}_0-\mathbf{0}=\Phi(2^{t+1}[\nid])-\Phi([\id])=\Phi(2^{t+1}[\nid]).$$
            Since $[\id]\preceq2^{t+1}[\nid]$, $\mathbf{h}_0-\mathbf{0}\in P$.
            Hence, $$\mathbf{0}\le_P\mathbf{h}_0\le_P\mathbf{h}_0.$$
            Now, let $\mathbf{0}\le_P\mathbf{h}_i\le_P\mathbf{h}_0$ for all $i< k$.
            Then, if $\tau_k=x_i$ we have $$\mathbf{h}_i=\Phi(2^{t+1}[a_i]).$$
            So, by similar reasoning as above, $\mathbf{0}\le_P\mathbf{h}_k.$
            Also, $\mathbf{h}_0-\mathbf{h}_k=\Phi(2^{t+1}(\nid-[a_k]))$.
            So, $\mathbf{h}_0\ge_P \mathbf{h}_k$.
            If $\tau_k=\id$, then $\mathbf{h}_k=\mathbf{0}\le_P\mathbf{h}_0$.
            If $\tau_k=(\tau_i\oplus \tau_j)$, then $\mathbf{h}_k=\mathbf{h}_0\wedge_P(\mathbf{h}_i+\mathbf{h}_j)$, where $i,j< k$.
            So, by induction hypothesis, $\mathbf{0}\le_P\mathbf{h}_k$ and by definition, $\mathbf{h}_k\le_P\mathbf{h}_0$.
            If $\tau_k=\neg\tau_i$, then $\mathbf{h}_k=\mathbf{h}_0-\mathbf{h}_i$ for some $i<k$.
            Since $\mathbf{h}_i\le_P\mathbf{h}_0,\ \mathbf{h}_0-\mathbf{h}_i\in P$.
            So, $\mathbf{0}\le_p\mathbf{h}_k$.
            Since $\mathbf{0}\le_P\mathbf{h}_i$, $\mathbf{h}_i\in P$.
            Therefore $\mathbf{h}_0-\mathbf{h}_0+\mathbf{h}_i\in P$ gives $\mathbf{h}_k\le_P\mathbf{h}_0$.
            If $\tau_k=\kappa\tau_i$, then $\mathbf{h}_k=\half(\mathbf{h}_0+\mathbf{h}_i)$ for some $i< k$.
            Then, $\mathbf{0}\le_P\mathbf{h}_k\le\mathbf{h}_0$.
       
        \begin{claim} 
        $\mathbf{h}_i=\Phi(2^t\hat\tau_i^{G_A}([a_1],\dots,[a_n],[\nid]))$ for all $1\le i\le t$.
        \end{claim}
        
            We can prove  this by induction on $i$.
            If $i=1$, then $\tau_i=x_i$.
            So, $\hat\tau=x_i$.
            Therefore, $$\mathbf{h}_i=\Phi(2^t\hat\tau^{G_A}_i([a_1],\dots,[a_n],[\nid])).$$
            Now, let $$\mathbf{h}_i=\Phi(2^t\hat\tau^{G_A}_i([a_1],\dots,[a_n],[\nid]))\ \text{for all}\ i< k.$$
            Then, if $\tau_k=x_i$, then by similar reasoning as above, $\mathbf{h}_k=\Phi(2^t\hat\tau^{G_A}_i([a_1],\dots,[a_n],[\nid]))$.
            If $\tau_k=\id$, then $\hat\tau=0$.
            So, $\mathbf{h}_k=\mathbf{0}=\Phi(2^t\hat\tau^{G_A}_k([a_1],\dots,[a_n],[\nid]))$.
            If $\tau_k=(\tau_i\oplus \tau_j)$, then $\mathbf{h}_k=\mathbf{h}_0\wedge_P(\mathbf{h}_i+\mathbf{h}_j)$, where $i,j< k$ and $\hat\tau_k=(y\wedge(\hat\tau_i+\hat\tau_j))$.
            Now, if $$\nid\preceq\hat\tau_i([a_1],\dots,[a_n],[\nid])+\hat\tau_j([a_1],\dots,[a_n],[\nid]),$$ then we have $$\Phi(\hat\tau_i([a_1],\dots,[a_n],[\nid])+\hat\tau_j([a_1],\dots,[a_n],[\nid])-\nid)\in P.$$
            So, $\Phi(2^t\hat\tau_i([a_1],\dots,[a_n],[\nid])+2^t\hat\tau_j([a_1],\dots,[a_n],[\nid])-2^t\nid)\in P$.
            Hence $$\mathbf{h}_i+\mathbf{h}_j-\mathbf{h}_0\in P.$$
            So, $\mathbf{h}_k=\mathbf{h}_0$.
            Else, we can similarly prove that $\mathbf{h}_k=\mathbf{h}_i+\mathbf{h}_j$.
            Hence $$\mathbf{h}_k= \Phi(2^t\hat\tau^{G_A}_k([a_1],\dots,[a_n],[\nid])).$$
            If $\tau_k=\neg \tau_i$, then $\mathbf{h}_k=\mathbf{h}_0-\mathbf{h}_i$ and $\hat\tau_k=y-\hat\tau_i$.
            Consequently $$\mathbf{h}_k=\Phi(2^t\hat\tau^{G_A}_k([a_1],\dots,[a_n],[\nid])).$$
            If $\tau_k=\kappa\tau_i$, then $\mathbf{h}_k=\dfrac{1}{2}(\mathbf{h}_0+\mathbf{h}_i)$ and $\hat{\tau}_k=^*\tau_i$.
            Then, 
            \begin{align*}
                &2\Phi(2^t\hat\tau^{G_A}_k([a_1],\dots,[a_n],[\nid]))\\
                =&\Phi(2^t\hat\tau^{G_A}_i([a_1],\dots,[a_n],[\nid])^*)+\Phi(2^t\hat\tau^{G_A}_i([a_1],\dots,[a_n],[\nid])^*)\\
            =&\Phi(2^t(\hat\tau^{G_A}_i([a_1],\dots,[a_n],[\nid]))^*+(\hat\tau^{G_A}_i([a_1],\dots,[a_n],[\nid]))^*)\\
                =&\Phi(2^t(\nid+\hat\tau^{G_A}_i([a_1],\dots,[a_n],[\nid])))\\
                =&2\mathbf{h}_k
                \end{align*}
                Hence $\Phi(2^t\hat\tau^{G_A}_k([a_1],\dots,[a_n],[\nid]))=\mathbf{h}_k.$
        
    In particular, we have
$$\mathbf{0} \leq_P \mathbf{h}_1, \dots, \mathbf{h}_n \leq_P\mathbf{h}_0, \quad 0 \leq_P \mathbf{h}_t \leq_P \mathbf{h}_0, \quad \mathbf{0} \neq \mathbf{h}_t = \Phi(2^t\widehat{\tau}^{G_A}([a_1],\dots,[a_n],[\nid])). $$ 
    Let \( \omega \) be a permutation of \( \{0, \dots, t\} \) such that $\mathbf{h}_{\omega(0)} \leq_P \mathbf{h}_{\omega(1)} \leq_P \dots \leq_P \mathbf{h}_{\omega(t)}.$    
    For each \( j = 1, \dots, t \), define a $\mathbf{d}_j:= \mathbf{h}_{\omega(j)} - \mathbf{h}_{\omega(j-1)}\in \mathds{Z}^r$.
    Let $P^* = \left\{\sum_{j=1}^t \lambda_j \mathbf{d}_j \,\middle|\, 0 \leq \lambda_j \in \mathds{R} \right\}$.
    Then, \( P^* \) is a closed and convex subset of \( \mathds{R}^r \).
    Note that if for some $0\le i,j\le t$ we have $\mathbf{h}_i\le_P\mathbf{h}_j$, then $\omega^{-1}(i)<\omega^{-1}(j)$. 
    So, we have
    \begin{align*}
        &\mathbf{h}_i\le_P\mathbf{h}_j\\
        \implies& \mathbf{h}_{\omega(\omega^{-1}(i))}\le_P\mathbf{h}_{\omega(\omega^{-1}(j))}\\
        \implies& \mathbf{h}_j-\mathbf{h}_i=\mathbf{h}_{\omega(\omega^{-1}(j))}-\mathbf{h}_{\omega(\omega^{-1}(j)-1)}+\mathbf{h}_{\omega(\omega^{-1}(j)+1)}-\mathbf{h}_{\omega(\omega^{-1}(j)-2)}\dots-\mathbf{h}_{\omega(\omega^{-1}(i))}\\
        \implies& \mathbf{h}_j-\mathbf{h}_i=\sum_{k=1}^t\lambda_k\mathbf{d}_k,
    \end{align*}
    where $\lambda_k=\begin{cases}
        1&\text{ if }w^{-1}(i)<w^{-1}(j);\\
        0&\text{ otherwise.}
    \end{cases}$\\
    So, $\mathbf{h}_j-\mathbf{h}_i\in P^*$.
    Similarly, we get that if $\mathbf{h}_j<_P\mathbf{h}_i$, then $\mathbf{h}_j-\mathbf{h}_i\notin P^*$.
    So, $\mathbf{h}_j\ge_P \mathbf{h}_i$ if and only if $\mathbf{h}_j-\mathbf{h}_i\in P^*$.

    Also, note that, if \( \lambda_1, \dots, \lambda_t \in\mathds{R}\) such that \(\lambda_i \geq 0 \) and \( \sum_{i=1}^t \lambda_i \mathbf{d}_i = 0 \), then \( \lambda_i = 0\  \fall i \) such that \( \mathbf{d}_i \neq 0 \).
    This follows by a similar proof as in \cite{CDM}.
    So, there exists $\mathbf{g} = (\gamma_1, \dots, \gamma_r) \in \mathds{R}^r$ such that $ \mathbf{g} \cdot \mathbf{d}_j > 0 $ for all $ \mathbf{d}_j\ne0, \, j = 1, \ldots, t $. 
    Since dot product is continuous, there exists an $\epsilon$-ball around $\mathbf{g}$ such that $\mathbf{g}\cdot\mathbf{d}_j>0$ for all $\mathbf{d}_j\neq0$ for some $\epsilon>0$.
    So, without loss of generality, let $ \gamma_1, \ldots, \gamma_r $ be linearly independent over $ \mathds{Q} $.
    
    Now, let $ \pi^+_\mathbf{g} := \{ \mathbf{h} \in \mathds{R}^r \mid \mathbf{h} \cdot \mathbf{g} \geq 0 \}$ and let $P' := \pi^+_\mathbf{g} \cap \mathds{Z}^r$.\\
    Then, we have $P^*\subseteq \pi^+_{\mathbf{g}}$.
    Also, $P'\cup -P'=\mathds{Z}^r$.
    Now, let $\mathbf{x}=(x_1,\dots,x_r)\in P'\cap -P'$.
    Then, we have $x_i\in \mathds{Z}$ and 
$$\mathbf{x}\cdot \mathbf{g}\ge0\ \text{and} \ \mathbf{x}\cdot\mathbf{g}\le0
        \implies \mathbf{x}\cdot\mathbf{g}=0
        \implies x_1\gamma_1+x_2\gamma_2+\dots+x_r\gamma_r=0
        \implies x_i=0.$$
    Since $\gamma_i$-s are linearly independent over $\mathds{Q}$.
    So, $P'\cap-P'=\set{\mathbf{0}}$.
    Now, define a relation, $\le_{P'}$, on $\mathds{Z}^r$ defined by $\mathbf{h}\le_{P'}\mathbf{k}$ if and only if $\mathbf{k}-\mathbf{h}\in P'$.
    Then, $T'=(\mathds{Z}^r, \le_{P'})$ is a totally ordered abelian group.
    Note that if $0\le i,j\le t$, then we have
$$  \mathbf{h}_i\le_P\mathbf{h}_j
        \implies \mathbf{h}_j-\mathbf{h}_i\in P^*\cap P\subseteq \pi^+_{\mathbf{g}}\cap \mathds{Z}^r=P'.$$
Hence $\mathbf{h}_i\le_{P'}\mathbf{h}_j$.
    Similarly, we get $\mathbf{h}_i\le_P \mathbf{h}_j\iff \mathbf{h}_i\le_{P'}\mathbf{h}_j$.
    Then, we have
    \begin{enumerate}
        \item $\mathbf{0}\le_{P'}\mathbf{h}_i\le_{P'}\mathbf{h}_0$ for $0\le i\le t$. This immediately follows from a previous claim.
        \item $\mathbf{h}_i=\Phi(2^t\hat\tau_i^{G_A}([a_1],\dots,[a_n],[\nid]))$.
    \end{enumerate}
    In particular, we have
    \begin{align}
        \mathbf{0} &\leq_{P'} \mathbf{h}_1, \dots, \mathbf{h}_n \leq_{P'}\mathbf{h}_0,\label{ineq1}\\
        \quad \mathbf{0} &\leq_{P'} \mathbf{h}_t \leq_{P'} \mathbf{h}_0, \label{ineq2}\\
        \mathbf{0} &\neq \mathbf{h}_t = \Phi(2^t\widehat{\tau}^{G_A}([a_1],\dots,[a_n],[\nid])). \label{ineq3}
    \end{align}
    Now, consider the map $\theta:\mathds{Z}^r\to \mathds{R}$ given by $$\theta(\mathbf{h})= \frac{\mathbf{g}\cdot\mathbf{h}}{\mathbf{g}\cdot\mathbf{h}_0}.$$
    Since $\gamma_1,\dots,\gamma_r$ are linearly independent over $\mathds{Q}$, $\theta$ is an injective group homomorphism between $Z^r$ and $\mathds{R}$.
    \begin{claim}
        $\theta$ preserves the inequalities in \ref{ineq1}, \ref{ineq2}, \ref{ineq3}.
    \end{claim}
    
        If $\mathbf{a}\le_{P'}\mathbf{b}$, 
        \begin{align*}
            \mathbf{b}-\mathbf{a}\in P'&\implies(\mathbf{b}-\mathbf{a})\cdot \mathbf{g}\ge0\\
            &\implies\mathbf{b}\cdot \mathbf{g}-\mathbf{a}\cdot\mathbf{g}\ge 0\\
&\implies\mathbf{b}\cdot\mathbf{g}\ge\mathbf{a}\cdot\mathbf{g}\\
            &\implies \frac{\mathbf{g}\cdot\mathbf{b}}{\mathbf{g}\cdot\mathbf{h}_0}\ge\frac{\mathbf{g}\cdot\mathbf{a}}{\mathbf{g}\cdot\mathbf{h}_0}\\
            &\implies\theta(\mathbf{b})\ge\theta(\mathbf{a})
        \end{align*}

    Now, let $\delta_i := \theta(\mathbf{h}_i)$ for $0\le i\le t$.
    Then, we have $0\le\delta_i\le\delta_0 $ for $0\le i\le t$ and $ \delta_0 = 1 $.
    \begin{claim}
        $\delta_i=\hat\tau^{\mathds{R}}_i(\delta_1,\dots,\delta_n,1)$ for $1\le i\le t$.
    \end{claim}

        We prove this by induction on $i$.
        If $i=1$, then $\tau_i=x_1$.
        So, $\delta_1=\hat\tau^{\mathds{R}}_i(\delta_1,\dots,\delta_n,1)$.
        Now, let $\delta_i=\hat\tau^{\mathds{R}}_i(\delta_1,\dots,\delta_n,1)$ for  all $i<k$.
        Then, if $\tau_k=x_i$, by similar reasoning as above, $\delta_k=\hat\tau^{\mathds{R}}_k(\delta_1,\dots,\delta_n,1)$.
        If $\tau_k=\id$, then $\hat\tau_k=0$ and $\mathbf{h}_k=\mathbf{0}$.
        So, $\delta=0=\hat\tau^{\mathds{R}}_k(\delta_1,\dots,\delta_n,1)$.
        If $\tau_k=(\tau_i\oplus\tau_j)$, then $\hat{\tau}_k=(y\wedge(\hat{\tau}_i+\hat{\tau}_j))$.
        Then, $\mathbf{h}_k=\mathbf{h}_0\wedge_P(\mathbf{h}_i+\mathbf{h}_j)=\mathbf{h}_0\wedge_{P'}(\mathbf{h}_i+\mathbf{h}_j)$.
        Now, if $\mathbf{h}_i-\mathbf{h}_j\le_{P'}\mathbf{h}_0$, then $\theta(\mathbf{h}_i)+\theta(\mathbf{h}_j)\le\theta(\mathbf{h}_0)=1$.
        So, $\delta_i+\delta_j\le1$.
        So, $\delta_k=\delta_i+\delta_j$.
        Else, we can similarly prove that $\delta_k=1$.
        So, $\delta_k=\hat\tau^{\mathds{R}}_i(\delta_1,\dots,\delta_n,1)$.
        If $\tau_k=\neg\tau_i$, then $\mathbf{h}_k=\mathbf{h}_0-\mathbf{h}_i$ and $\hat{\tau}_k=y-\hat{\tau}_i$.
        So, $\delta_k=\hat\tau^{\mathds{R}}_i(\delta_1,\dots,\delta_n,1)$.
        If $\tau_k=\kappa\tau_i$, then $\mathbf{h}_k=\dfrac{1}{2}(\mathbf{h}_0+\mathbf{h}_i)$ and $\hat{\tau}_k=^*\tau_i$.
        So, $\delta_k=\hat\tau^{\mathds{R}}_i(\delta_1,\dots,\delta_n,1)$.
    Then, by Proposition \ref{l5.3}, we get that if $\mathscr{R}=\Gamma(\mathds{R},1,*)=([0,1],\oplus,\neg,0,\kappa)$, then $\mathscr{R}\not\vDash \tau=\id$.
\end{proof}
\begin{theorem}
    Every tautology in CPL is provable, i.e. if $\vDash \alpha$ for some $\alpha\in \mathscr{L}$, then $\vdash\alpha$.
\end{theorem}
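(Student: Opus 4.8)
The plan is to reduce the syntactic statement $\vdash\alpha$ to a single equation in the Lindenbaum continuous algebra $\mathcal{C}$, and then to transport that equation from the standard model using the algebraic Chang completeness theorem (an equation holds in every continuous algebra iff it holds in $[0,1]$). By the proposition characterizing provability, $\vdash\alpha$ is equivalent to $[\alpha]=\neg\bot$ in $\mathcal{C}$, so it suffices to establish this identity.

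First I would set up a translation $\alpha\mapsto\tau_\alpha$ sending each formula of $\mathscr{L}$ to a continuous term on variables $x_1,\dots,x_k$ indexed by the propositional variables occurring in $\alpha$, via $P_i\mapsto x_i$, $\alpha\itai\beta\mapsto\tau_\alpha\oplus\neg\tau_\beta$, $\neg\alpha\mapsto\neg\tau_\alpha$, and $\half\alpha\mapsto\kappa\tau_\alpha$. A routine induction on the structure of $\alpha$, using the defining clause $[\gamma]\oplus[\delta]=[\gamma\itai\neg\delta]$ of $\mathcal{C}$ together with $\neg\neg\delta\equiv\delta$, shows that $\tau_\alpha^{\mathcal{C}}([P_{i_1}],\dots,[P_{i_k}])=[\alpha]$. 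Hence $[\alpha]=\neg\bot$ will follow as soon as we know that the continuous equation $\tau_\alpha=\nid$ is satisfied by every continuous algebra.

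The crux is to read the truth-assignment semantics itself as evaluation in a continuous algebra. Interpreting the same term $\tau_\alpha$ in the dual algebra $[1,0]=\overline{[0,1]}$ of Proposition \ref{pdual} — whose operations are $x\odot y=\max\{0,x+y-1\}$ and $\kappa' x=x/2$ — the three recursive clauses of the translation reproduce exactly the clauses defining $v$, so that $\tau_\alpha^{[1,0]}(v(P_{i_1}),\dots,v(P_{i_k}))=v(\alpha)$ for every truth assignment $v$. Thus $\vDash\alpha$ says precisely that $\tau_\alpha^{[1,0]}$ is identically $\nid$, i.e. $[1,0]\vDash\tau_\alpha=\nid$. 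Since the negation map $x\mapsto\neg x$ is a continuous isomorphism $[0,1]\cong[1,0]$, this is equivalent to $[0,1]\vDash\tau_\alpha=\nid$. Applying the algebraic Chang completeness theorem, the equation $\tau_\alpha=\nid$ holds in every continuous algebra, in particular in $\mathcal{C}$ evaluated at $[P_{i_1}],\dots,[P_{i_k}]$, which yields $[\alpha]=\neg\bot$ and hence $\vdash\alpha$.

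I expect the main obstacle to be the bookkeeping forced by the reversed truth–falsity orientation: one must be scrupulous that the translation lands in $[1,0]$ rather than $[0,1]$ and invoke the duality isomorphism at exactly the right step, since an orientation slip here would silently break the correspondence between $\vDash$ and the equation handed to Chang completeness. The two supporting inductions — matching $\tau_\alpha^{\mathcal{C}}$ with $[\alpha]$, and $\tau_\alpha^{[1,0]}$ with $v$ — are otherwise entirely routine.
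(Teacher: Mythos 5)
Your proposal is correct and follows essentially the same route as the paper's proof: the same translation $\alpha\mapsto\tau_\alpha$, the same identification $[\alpha]=\tau_\alpha^{\mathcal{C}}([P_{i_1}],\dots,[P_{i_k}])$, the same appeal to the algebraic Chang completeness theorem, and the same use of the isomorphism $x\mapsto 1-x$ between $[0,1]$ and $[1,0]$ to match term evaluation with the truth-assignment semantics. The only difference is that you argue directly from $\vDash\alpha$ while the paper argues contrapositively from $\not\vdash\alpha$, which is immaterial.
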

\begin{proof}
    Let $\alpha\in \mathscr{L}$ be not provable.
    Let the propositional variables in $\alpha$ be $P_1,\dots,P_n$.
    Let $\tau_{\alpha}$ be a continuous term obtained by substituting $\itai$ by $\oplus\neg$ and $P_i$ by $x_i$ in $\alpha$.
    Let $\mathcal{C}=(\mathscr{L}/_{\equiv},\oplus,\neg,\bot,\half)$
    \begin{claim}$[\alpha]=\tau_{\alpha}^{\mathscr{L}/_{\equiv}}([P_1],\dots,[P_n])$.
    \end{claim}
    
        We can prove this by induction on the number of operations in $\alpha$.
        If ${\alpha}$ has no operations, then $\alpha=P_1$.
        Then, $\tau=x_i$ and $[\alpha]=\tau^{\mathscr{L}/_{\equiv}}(P_1)$.
        So, let $[\varphi]=\tau_{\varphi}^{\mathscr{L}/_{\equiv}}([P_1],\dots,[P_n])$ for every $\varphi\in\mathscr{L}/_{\equiv}$ with less than $k$ operations.
        If ${\alpha}$ has $k$ operations, then ${\alpha}={\beta}\itai{\gamma}$ or $\neg{\beta}$ or $\kappa{\beta}$ for some $\beta,\gamma\in \mathscr{L}/_{\equiv}$.
        If ${\alpha}={\beta}\itai{\gamma}$, then 
        \begin{align*}
            \tau_{\alpha}^{\mathscr{L}/_{\equiv}}(P_1,\dots,P_n)&=(\tau_\beta^{\mathscr{L/\equiv}}(P_1,\dots,P_n)\oplus\neg\tau_\gamma^{\mathscr{L/\equiv}}(P_1,\dots,P_n))\\
            &=[\beta]\oplus\neg[\gamma]\\
            &=[\beta\oplus\neg\gamma]\\
            &=[\alpha].
        \end{align*}
        The other cases follow through a similar reasoning. 
        
        Since $\alpha$ is not provable, $[\alpha]\neq \neg\bot$. So, $\mathcal{C}\not\vDash \tau_{\alpha}=\neg\id$ as $\id$ in $\mathcal{C}$ is $\bot$.
    So, $\tau_{\alpha}=\nid$ is not  satisfied by $([0,1],\oplus,\neg,0 ,\kappa)$. Let $a_1,\dots,a_n\in [0,1]$ such that $\tau^{[0,1]}(a_1,\dots,a_n)\neq 1$. Since $[1,0]=\overline{[0,1]}$ is isomorphic to $([0,1],\oplus,\neg,0,\kappa)$ by the isomorphism $x\mapsto 1-x$.
    So, $\tau_{\alpha}^{[1,0]}(1-a_1,\dots,1-a_n)\neq 0$.
    Consider the valuation $v:\mathscr{L_0}\to [0,1]$ such that $P_i\mapsto 1-a_i$.
    Then,  $\not\vDash\alpha$ as $\hat{v}(\alpha)\ne 0$.
    \end{proof}

\section*{Conclusion}
In this paper, we have developed algebraic semantics for the propositional fragment of continuous logic by introducing the notion of \emph{continuous algebra}, which may be viewed as a Lindenbaum-type algebra for Continuous Propositional Logic. By endowing MV-algebras with a unary operator $\kappa$ corresponding to the halving connective and by formulating suitable axioms governing its behavior, we obtain an algebraic framework that preserves the essential structure of MV-algebras while faithfully capturing the semantic features specific to continuous logic.

We have carried out a detailed structural analysis of continuous algebras, including the study of ideals, quotient constructions, and homomorphisms, and we have shown that every continuous algebra can be represented as a subdirect product of totally ordered continuous algebras, called continuous chains. Moreover, by establishing a precise correspondence between continuous algebras and 2-divisible lattice-ordered groups with strong unit, we have extended Chang completeness theorem to this setting and derived a purely algebraic proof of the weak completeness of Continuous Propositional Logic.

The algebraic framework developed here opens several directions for further research. In ongoing work, we aim to provide a purely algebraic proof of strong approximated completeness for CPL. Beyond completeness, we plan to investigate additional structural properties of continuous algebras, including the amalgamation proposition and its logical counterpart, the deductive interpolation proposition for continuous logic. We also intend to extend the present approach to richer logical systems by developing algebraic semantics for continuous first-order logic, drawing inspiration from Tarski’s theory of cylindric algebras \cite{LH}, while at the same time proposing a notion of \emph{complete continuous algebra} suited to the continuous setting. Finally, we plan to explore algebraic structures corresponding to affine logic \cite{BI}, thereby broadening the scope of the algebraic methodology introduced in this work.

\section{Acknowledgment:}
The authors thank the reviewers for their careful reading and valuable comments, which led to a significant improvement of the present version of the paper. They are grateful to Dr. Antonio Di Nola and Dr. Anand Pillay for introducing them to the areas of MV-algebras and continuous logic. They also thank Dr. Brian Wynne and Dr. Katsuhiko Sano for helpful comments and valuable remarks.

\section{Declarations}
\begin{itemize}
    \item \textbf{Author Contributions}:- All authors contributed equally.
    \item \textbf{Funding}:- Not applicable.
    \item \textbf{Availability of data and materials}:- Not applicable.
    \item \textbf{Conflict of interest}:- The authors declare no competing interests.
    \item \textbf{Ethical approval}:- Not applicable.
\end{itemize}

\end{document}